\newtheorem{theorem}{Theorem}
\newtheorem{lemma}[theorem]{Lemma}
\newtheorem{corollary}[theorem]{Corollary}
\theoremstyle{definition}
\newtheorem{notation}[theorem]{Notation}
\newtheorem{remark}[theorem]{Remark}
\newtheorem{definition}[theorem]{Definition}
\newtheorem{example}[theorem]{Example}
\numberwithin{theorem}{section}
\numberwithin{equation}{theorem}
\newcommand{\CA}{\mathcal{A}}
\newcommand{\CB}{\mathcal{B}}
\newcommand{\CE}{\mathcal{E}}
\newcommand{\CG}{\mathcal{G}}
\newcommand{\CH}{\mathcal{H}}
\newcommand{\CW}{\mathcal{W}}
\newcommand{\kk}{\Bbbk}
\newcommand{\NN}{\mathbb{N}}
\newcommand{\s}{s}
\renewcommand{\t}{t}
\newcommand{\fiber}[2]{\!\tensor*[^{}_{#1}]{\times}{^{}_{#2}}}
\renewcommand{\subset}{\subseteq}
\newcommand{\dif}[1]{\ifstrequal{#1}{}{\mathop{d\,\!}}{\mathop{d#1}}}
\DeclareMathOperator{\Id}{Id}
\newcommand{\grpd}{\rightrightarrows}
\newcommand{\inv}{^{-1}}
\newcommand{\comment}[1]{}
\newcommand{\coto}{\overset{co}{\rightarrow}}
\newcommand{\nerv}[1]{^{(#1)}}
\newcommand{\bt}{\mathbf{t}}
\newcommand{\bs}{\mathbf{s}}
\newcommand{\bu}{\mathbf{u}}
\DeclareMathOperator{\Ch}{Ch}
\DeclareMathOperator{\DG}{DG}
\DeclareMathOperator{\DC}{DC}
\DeclareMathOperator{\shift}{shift}
\newcommand{\twodarrow}{\arrow[d, shift left] \arrow[d, shift right]}
\newcommand{\threedarrow}{\arrow[d, shift left, shift left] \arrow[d, shift right, shift right] \arrow[d]}
\newcommand{\fourdarrow}{\arrow[d, shift left, shift left, shift left] \arrow[d, shift right , shift right, shift right] \arrow[d, shift left] \arrow[d, shift right]}
\title{Simplicial sheaves of modules and Morita invariance of groupoid cohomology}
\author{Xiang Tang}
\address{Department of Mathematics, Washington University, St. Louis, MO, U.S.A., 63130.}
\author{Joel Villatoro}
\address{Department of Mathematics, University of Indiana, Bloomington, IN, U.S.A., 47405.}
\begin{document}
\begin{abstract}In this article we develop a unified framework for proving Morita invariance of cohomology theories associated to Lie groupoids. Our approach is to view these cohomology theories as arising from sheaves of modules on the nerve of the groupoid. We establish criteria for when such sheaves of modules give rise to Morita invariant cohomology theories.
\end{abstract}

\maketitle

\section{Introduction}
Briefly, a groupoid is a small category with invertible morphisms. More concretely, a groupoid consists of two sets \( \CG_0 \) and \( \CG_1 \) called the set of \emph{objects} and the set of \emph{arrows}, respectively. These two sets are equipped with the following additional structure:
\begin{itemize}
\item The source $s^\mathcal{G}$ and target $t^\mathcal{G}$ are functions from $\mathcal{G}_1$ to $\mathcal{G}_0$ mapping a morphism $g\in \mathcal{G}$ to elements in $\mathcal{G}_0$. From a category theoretic point of view, the source and target functions represent the domain and codomain, respectively.
\item The multiplication $m$ is a function from $\mathcal{G}^{(2)}=\mathcal{G}_1 {_s\times _t}\mathcal{G}$ to $\mathcal{G}_1$ mapping a pair of composable morphisms, i.e. $(g_1, g_2)\in \mathcal{G}_1\times \mathcal{G}_2$ such that $s(g_1)=t(g_2)$, to the composition $g_1g_2\in \mathcal{G}_1$. 
\item The identity morphism, which is a function $i: \mathcal{G}_0\to \mathcal{G}_1$.
\item The inverse of a morphism gives a bijection from $\mathcal{G}_1$ to itself. 
\end{itemize}
Note that if the set of objects \( \CG_0 \) is a singleton, then this data is equivalent to the data needed to define a group.

Similar to Lie groups, Lie groupoids are groupoids $\mathcal{G}$ where both $\mathcal{G}_1$ and $\mathcal{G}_1$ are smooth manifolds\footnote{$\mathcal{G}_1$ might not be Hausdorff.} with smooth structure maps\footnote{We often require that both $s^{\mathcal{G}}$ and $t^{\mathcal{G}}$ are submersions.}. Lie groups are useful as very general models for studying the symmetries of a space. From this point of view, one thinks of a Lie groupoid \( \CG \) as encoding symmetry data over the collection of objects \( \CG_0 \). For example, if a Lie group \( G \) acts on a smooth manifold \( M \) then we can form a Lie groupoid with arrows \( \CG_1 := G \times M \) and objects \( \CG_0 := M \).

Since Lie groupoids represent very general models for symmetry, they are also useful as models for the study quotient spaces where we take the quotient of a manifold \( M \) via some prescribed symmetry. Indeed, given a Lie groupoid $\mathcal{G}$, we can say two elements $x,y\in \mathcal{G}_0$ are equivalent, denoted by $x\sim y$, if there is a morphism $g\in \mathcal{G}_1$ such that $x=s(g)$ and $y=t(g)$. This is to be thought of as the equivalence relation induced on \( \CG_0 \) arising from symmetries encoded by \( \CG \). For example, in the case of a \( G \) action on a manifold \( M \) the equivalence classes on \( \CG_0 \) are precisely the orbits of the \( \CG \) action.

In general, we call the equivalence classes of \( \CG_0 \) the \emph{orbits} of the Lie groupoid \( \CG \) and \( \CG_0/\sim \) is called the \emph{orbit space}. Now, in general, the orbit space \( \CG_0/\sim \) is not a manifold and can have very coarse topology (few continuous functions). The orbit space \( \CG_0 /\sim \) is sometimes called the \emph{coarse moduli space} and is often a rather unsatisfying model for the singular space. An alternative to this purely topological approach to the quotient arises from viewing Lie groupoids as \emph{differentiable stacks}. The idea behind the differentiable stack approach is that one should treat the entire Lie groupoid as a sort of atlas for the quotient space. The notion of Morita equivalence of Lie groupoids is defined\footnote{There are several equivalent characterizations of Morita equivalence (see \cite{Behrend-Xu}). One version says two groupoids are Morita equivalent if they have equivalent categories of principle bundles. Another version says Lie groupoids are Morita equivalent if there exists a ``smooth'' equivalence of categories between them (see Sec. \ref{subset:weak equivalence}).} in a way that, philosophically speaking, means two Lie groupoids are Morita equivalent when they are both atlases for ``isomorphic'' differentiable stacks. Towards this end, it is interesting to study properties of Lie groupoids that are preserved by Morita equivalence (i.e. Morita invariants) as they are thought of as natural invariants associated to the underlying singular space.

This article is concerned with studying Morita invariants that arise from various natural cochain complexes that one can associate to Lie groupoids. The most fundamental of these cochain complexes is simply called the \emph{groupoid cohomology} which is formed from the set of \emph{groupoid cochains} (see Definition~\ref{defn:simplicial.cohomology}). This complex (and hence its cohomology) is thought of as a model for the algebra of functions on the orbit space. In fact, when a Lie groupoid is Morita equivalent to a smooth manifold (a space equipped with only trivial symmetries), this complex is quasi-isomorphic to the vector space of functions on the quotient space.

 Moving beyond groupoid cochains, there are other natural complexes and cohomologies that come from closely related structures. For example, one can extend groupoid cochains to groupoid cochains with values in a representation of a Lie groupoid on a vector bundle or some weak version thereof. This complex provides us with a model for the space of sections of a vector bundle over the quotient space. Another example is the Bott-Shulmann cohomology of a Lie groupoid, which provides a model for the deRham cohomology of the underlying singular space. As natural generalizations of group cohomology, groupoid cohomologies are well studied in literature with applications in the study of gerbes, $C^*$-algebras, dynamical systems, Lie theory, etc., c.f. \cite{Behrend-Xu, Haefliger, Muhly-Williams, Renault}. In particular, there are many articles in the literature concerned with proving Morita invariance of these and other closely related cohomologies, c.f. \cite{AriasAbad, Crainic-Moerdijk, del-hoyo, Li-Bland-Meinrenken, Meinrenken-Salazar, Xu-Weinstein, Tu_Grpd_Coh}.

The aim of this article is to provide a unified framework for proving that these kinds of cohomology theories are Morita invariants. Rather than having many different specialized approaches adapted to each context, in this article, we will show that one can establish the Morita invariance of these different cohomology theories by viewing them as different examples of one structure: a well-behaved sheaf of complexes over the nerve of a groupoid. We will give a set of conditions which are sufficient to conclude that the cohomology of such a sheaf of complexes indeed constitutes a Morita invariant.

\subsection*{Main Results}
In order to state our main results, let us establish some terminology and observations:
\begin{itemize}
    \item A smooth homomorphism of Lie groupoids \( F \colon \CH \to \CG \) is called a \emph{weak equivalence} if its an equivalence of categories and \( F_0 \colon \CG_0 \to \CG_0 \) is transverse to the orbits of \( \CG_0 \). Two Lie groupoids are Morita equivalent if and only if they can be connected by a zig-zag of weak equivalences\footnote{For the purposes of this article this is essentially the \emph{definition} of Morita equivalence.}.
    \item A \emph{cosimplicial complex} on a Lie groupoid \( \CG \) consists of a sheaf of complexes defined on the nerve of \( \CG \) which has a (co)simplicial structure compatible with the existing simplicial structure on \( \CG \). Such cosimplicial complexes are called \emph{good} if they satisfy a sort of Kan-like condition (see Definitions~\ref{definition:cosimplicial.module} and Definition~\ref{definition:cosimplicial.complex}). Every cosimplicial complex has a natural cohomology associated to it that we denote with \( H(\CE) \).
    \item A \emph{sheaf of complexes on the big site} \( \CE \) is a sheaf on the site of smooth manifolds which assigns to each manifold \( M \) a complex \( \CE(M) \) of \( C^\infty_M\)-modules (think, for example, the sheaf of differential forms \( \Omega \) of all degrees).
    \item Given a homomorphism of Lie groupoids \( F \colon \CH \to \CG \) and a cosimplicial complex \( \CE \) on \( \CG \) then it is possible to construct a ``pull-back'' cosimplicial complex \( F^* \CE  \) on \( \CH \).
    
\end{itemize}
We can now state our main theorems:
\begin{theorem}[From Theorem \ref{theorem:morita.invariance.for.pullback.complexes} in body]
    Suppose \( F \colon \CH \to \CG \) is a weak equivalence of Lie groupoids and \( \CE \) is sheaf of complexes on the big site. Then \( H(\CE(\CG) )  \cong H(\CE(\CH)) \).
\end{theorem}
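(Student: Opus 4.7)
The strategy is to use a descent argument, taking advantage of the fact that $\CE$ is a sheaf on the big site (not merely an abstract cosimplicial complex). Evaluating $\CE$ on each level of the nerve yields a bicomplex, whose associated total complex has cohomology $H(\CE(\CG))$, and the smooth homomorphism $F$ induces a natural map of bicomplexes $F^*\colon \CE(\CG^{(\bullet)}) \to \CE(\CH^{(\bullet)})$. The goal is to show that this induces an isomorphism on total cohomology.

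The first step is to put the weak equivalence into a tractable form. Using that $F$ is fully faithful in the smooth sense (the square relating $\CH_1$, $\CG_1$, $\CH_0\times \CH_0$, and $\CG_0\times \CG_0$ is Cartesian) together with transversality of $F_0$ to the orbits, I would show that each level map $F^{(n)}\colon \CH^{(n)}\to \CG^{(n)}$ of the induced simplicial map of nerves is a surjective submersion---perhaps after first composing with a Morita equivalence to arrange that $F_0$ itself is a surjective submersion. The Cartesian square condition propagates the submersion property cleanly at every simplicial degree.

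Next, I would invoke the sheaf property. For any surjective submersion $\pi\colon N\to M$ of smooth manifolds, descent for sheaves on the big site provides a quasi-isomorphism between $\CE(M)$ and the totalization of $\CE$ applied to the \v{C}ech nerve $N\times_M N\times_M\cdots$. Applying this level-wise to the simplicial surjective submersion $F^{(\bullet)}$ produces a bisimplicial manifold which, combined with the internal differential of $\CE$, yields a triple complex. The associated total complex receives a quasi-isomorphism from $\CE(\CG^{(\bullet)})$ via descent at each simplicial level of the nerve, and also a map from $\CE(\CH^{(\bullet)})$ induced by the diagonal in the descent direction; the latter is a quasi-isomorphism via an extra-degeneracy or simplicial homotopy argument, since the fibered-product direction is a simplicial manifold with a contraction onto $\CH^{(\bullet)}$.

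I expect the main obstacle to be the convergence and compatibility of this triple-complex comparison: one must check that the level-wise descent quasi-isomorphisms truly assemble into a quasi-isomorphism of total complexes, which typically requires a filtration argument and a spectral-sequence comparison that uses boundedness or suitable exactness of the sheaf-theoretic data. Once this is in place, the invariance $H(\CE(\CG))\cong H(\CE(\CH))$ follows directly, and Morita invariance along any zig-zag of weak equivalences is an immediate consequence.
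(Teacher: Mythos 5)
There is a genuine gap, and it sits exactly where your sketch waves at ``an extra-degeneracy or simplicial homotopy argument.'' Two problems. First, for a weak equivalence the level maps \( F\nerv{n} \colon \CH\nerv{n} \to \CG\nerv{n} \) are in general not surjective submersions --- \( F_0 \) is only \emph{smoothly essentially} surjective (e.g.\ the inclusion of a point into the pair groupoid of \( M \) is a weak equivalence, and then every \( F\nerv{n} \) is the inclusion of a single point into \( M^{n+1} \)), so \v{C}ech descent along \( F\nerv{n} \) simply does not apply. You acknowledge this and propose to first reduce to the case where \( F_0 \) is a surjective submersion, but that reduction is not a preprocessing step: it amounts to replacing \( F \) by the span through a third groupoid built on \( \CG_1 \fiber{\bs}{F_0} \CH_0 \), which is precisely the object the paper packages as the \( F \)-double groupoid \( \DG_F \) with \( \DG_F\nerv{n,m} = \CG\nerv{n+1}\fiber{\bs}{F_0\circ\bt}\CH\nerv{m} \). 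The reduction therefore contains most of the content of the theorem rather than deferring it.

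Second, even after arranging that every \( F\nerv{n} \) is a surjective submersion, your triple complex built from the \v{C}ech nerve of \( F\nerv{\bullet} \) only has \emph{one} of the two contractions you need. Descent in the \v{C}ech direction identifies the total cohomology with \( H(\CE(\CG)) \); the \v{C}ech nerve of \( F\nerv{n} \) contracts (in the descent sense) onto \( \CG\nerv{n} \), not onto \( \CH\nerv{n} \), and its degree-zero row \( \CE(\CH\nerv{\bullet}) \) does not include quasi-isomorphically by any extra-degeneracy argument --- asserting that it does is asserting the theorem. This is why the paper uses \( \DG_F \) instead of the \v{C}ech nerve: its columns retract onto \( \CH\nerv{m} \) by inserting units along \( P_F \) (Lemma~\ref{lemma:diagram.lemma.algebras}), while its rows retract onto \( \CG\nerv{n} \) only \emph{locally}, using local sections of \( (g,x)\mapsto \bt(g) \) on \( \CG_1\fiber{\bs}{F_0}\CH_0 \) (this is where essential surjectivity and full faithfulness enter), patched into a global homotopy by a partition of unity as in Lemma~\ref{lemma:local.to.global.left.retract}. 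Your appeal to descent for a sheaf on the big site along a surjective submersion is itself proved by exactly this local-section-plus-partition-of-unity mechanism, so nothing is saved by invoking it; what is missing is a bisimplicial object with contractions toward \emph{both} \( \CG \) and \( \CH \), and the \v{C}ech nerve of \( F \) is not such an object.
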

\begin{theorem}[From Theorem \ref{thm:big.site.for.complexes} in body]
    Suppose \( F \colon \CH \to \CG \) is a weak equivalence of Lie groupoids and \( \CE \) is a good cosimplicial complex on \( \CG \). Let \( F^* \CE \) denote the pullback cosimplicial complex on \( \CG \). Then:
    \[ H(\CE) \cong H(F^* \CE) \]
\end{theorem}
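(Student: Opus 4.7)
The plan is to reduce the statement to the preceding theorem on big-site sheaves of complexes. The key idea is that the Kan-like \emph{goodness} condition on a cosimplicial complex is exactly what allows one to replace it, up to natural quasi-isomorphism, by the restriction of a sheaf of complexes on the big site, which is the setting in which the previous theorem already supplies the desired invariance.

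Concretely, given a good cosimplicial complex $\CE$ on $\CG$, the first step is to construct a sheaf of complexes $\widetilde{\CE}$ on the big site of smooth manifolds whose restriction to $\Nrv \CG$ is quasi-isomorphic to $\CE$. A natural candidate is a left Kan extension of $\CE$ along the functor from the simplex category of $\CG$ (whose objects are pairs $([n], \sigma \colon [n] \to \CG)$) to $\Man$ sending such a pair to $\CG_n$. For a manifold $M$, the complex $\widetilde{\CE}(M)$ is then assembled from the values of $\CE$ on nerve levels via pullback along smooth maps $M \to \CG_n$. The goodness condition plays the role of a Kan fibrancy hypothesis: it guarantees that the simplicial diagram of complexes entering the Kan extension is homotopically well-behaved, so that the assembly does not destroy cohomological information and the result is actually a sheaf on the big site.

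The second step is to check two compatibilities. First, the natural comparison map $\widetilde{\CE}(\CG_n) \to \CE(\CG_n)$ should be a quasi-isomorphism for every $n$, so that the induced map of total complexes is a quasi-isomorphism and $H(\widetilde{\CE}(\CG)) \cong H(\CE)$. Second, the construction $\CE \mapsto \widetilde{\CE}$ should commute with pullback along the homomorphism $F$, so that $H(\widetilde{\CE}(\CH)) \cong H(F^* \CE)$. Applying Theorem~\ref{theorem:morita.invariance.for.pullback.complexes} to $\widetilde{\CE}$ and the weak equivalence $F$ then yields $H(\widetilde{\CE}(\CG)) \cong H(\widetilde{\CE}(\CH))$, and combining with the two quasi-isomorphisms above gives $H(\CE) \cong H(F^* \CE)$.

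The main obstacle is the first compatibility: showing that the level-wise comparison map between $\widetilde{\CE}$ restricted to the nerve and the original cosimplicial complex $\CE$ is a quasi-isomorphism. I would attack this with a spectral sequence argument, filtering both total complexes by cosimplicial degree so that the $E_1$-pages are expressible in terms of sheaf data on the manifolds $\CG_n$. The Kan/good condition should then translate into the existence of horn fillers and hence explicit contracting homotopies forcing the comparison to be an isomorphism at the $E_1$-level; a standard spectral sequence comparison completes the argument. Convergence of the spectral sequence, as well as verifying that the Kan extension is actually a sheaf rather than merely a presheaf, are additional technical points, but both should follow from the mild boundedness built into the definition of a cosimplicial complex together with the goodness hypothesis.
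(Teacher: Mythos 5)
Your reduction runs in the wrong direction and does not go through. In this paper the big-site statement is the \emph{special} case: a sheaf of complexes on the big site restricts to a cosimplicial complex that is automatically good, so that theorem is a consequence of the machinery behind the present one, not a source for it. To deduce the general statement from the big-site one you would need every good cosimplicial complex to be, up to levelwise quasi-isomorphism compatible with the coface maps, the restriction of a big-site sheaf. This already fails for the basic example of a representation of \( \CG \) on a vector bundle \( E \to \CG_0 \) (Example~\ref{example:representation.as.cosimplicial.module}): there the coface maps encode the \( \CG \)-action on \( E \), while the coface maps of any big-site sheaf restricted to the nerve are determined purely by pullback along the smooth face maps, so no such \( \widetilde{\CE} \) can exist. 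The Kan-extension gambit does not rescue this: the counit of the extension evaluated on \( \CG\nerv{n} \) is assembled from \emph{all} smooth maps into the nerve levels, and nothing forces it to be a quasi-isomorphism --- in this paper ``goodness'' means liftability of target families (retractions attached to local sections of \( \bt \)) to module comorphisms, not a fibrancy condition controlling homotopy colimits, so it gives you no horn fillers for your \( E_1 \)-page comparison. A further mismatch: even granting \( \widetilde{\CE} \), the big-site theorem compares \( H(\widetilde{\CE}(\CG)) \) with \( H(\widetilde{\CE}(\CH)) \), whereas the statement to prove concerns \( H(F^*\CE) \) with \( F^*\CE \) defined by the tensor product \( \CE\nerv{n} \otimes_{C^\infty(\CG\nerv{n})} C^\infty(\CH\nerv{n}) \); these are different objects in general (for instance, forms pulled back from \( \CG\nerv{n} \) versus all forms on \( \CH\nerv{n} \)).

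The idea your proposal is missing is the paper's actual mechanism. One forms the shift double (decalage) of \( \CE \) and the \( F \)-double groupoid, so that \( \CE_{\shift} \) and \( F_{\shift}^* \CE_{\shift} \) become triple complexes; the relevant left-normal and bottom-normal augmentations, together with \( F^\#_{\shift} \circ LN^\# \), are shown to be acyclic by checking one level of the normal direction at a time (Lemma~\ref{lem:tripleaugmentationacyclic}), which reduces everything to the cosimplicial module case (Theorem~\ref{thm:weakequivalencecosimplicial}). There, goodness is used exactly once: to lift the target family of a local section of the target map to comorphisms of modules, after which partitions of unity glue the local left retractions into global contracting homotopies (Lemma~\ref{lemma:local.to.global.left.retract}). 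The augmentation lemma for triple complexes then yields that \( F^\# \) is a quasi-isomorphism. None of these steps is supplied by the spectral-sequence comparison you sketch.
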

Several well-known results regarding Morita invariant cohomologies follow from these theorems as corollaries. Our first two corollaries relate the complexes of groupoid cochains of Morita equivalent groupoids.
\begin{corollary}[\cite{Cra03}\footnote{This article is the earliest appearance of this fact in the literature as far as the authors are aware.}]
    Suppose \( \CG \) is a Lie groupoid and \( \Ch(\CG)  \) denote the complex of groupoid cochains on \( \CG \). If \(\CG  \) and \( \CH \) are Morita equivalent Lie groupoids then we have that:
    \[ H(\Ch(\CG)) \cong H(\Ch(\CH)) \]
\end{corollary}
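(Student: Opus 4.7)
The plan is to exhibit the groupoid cochain complex \( \Ch(\CG) \) as the evaluation \( \CE(\CG) \) of a sheaf of complexes \( \CE \) on the big site of smooth manifolds, and then combine the first main theorem (Theorem~\ref{theorem:morita.invariance.for.pullback.complexes}) with the zig-zag characterization of Morita equivalence stated in the main results.

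First I would let \( \CE \) be the big-site sheaf defined by \( \CE(M) := C^\infty(M) \), placed in cohomological degree zero as a complex of \( C^\infty_M \)-modules with zero internal differential. The identification to verify is that \( \CE(\CG) \cong \Ch(\CG) \): in degree \( n \) both spaces are \( C^\infty(\CG_n) \), and the differential on \( \CE(\CG) \) arising from the cosimplicial structure induced by the face maps of the nerve should agree with the alternating sum \( \sum_{i=0}^{n+1}(-1)^i \face_i^* \) of pullbacks along the face maps that defines the groupoid cochain differential. This is a naturality check that the general big-site construction specializes correctly when \( \CE \) is concentrated in degree zero and carries no nontrivial internal structure; I do not anticipate any obstruction here beyond unpacking the definitions.

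With this identification in place, the first main theorem directly yields \( H(\Ch(\CG)) \cong H(\Ch(\CH)) \) whenever \( F \colon \CH \to \CG \) is a weak equivalence. To pass from weak equivalences to arbitrary Morita equivalences, I would invoke the fact, recorded in the first bullet of the main results, that two Lie groupoids are Morita equivalent precisely when they can be connected by a zig-zag
\[ \CG \longleftarrow \CK_1 \longrightarrow \CK_2 \longleftarrow \cdots \longrightarrow \CH \]
of weak equivalences. Applying the first main theorem to each arrow of the zig-zag and composing the resulting isomorphisms on cohomology produces the desired isomorphism \( H(\Ch(\CG)) \cong H(\Ch(\CH)) \). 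Essentially all of the technical weight of the corollary has been front-loaded into the main theorem; the real obstacle lies upstream in establishing that theorem, while the corollary itself reduces to the short verification that the classical complex of groupoid cochains is a special case of the big-site construction.
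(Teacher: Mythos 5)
Your proposal is correct: realizing $\Ch(\CG)$ as the evaluation of the big-site sheaf $M \mapsto C^\infty(M)$ (concentrated in degree zero with trivial internal differential), applying the big-site Morita invariance theorem to each weak equivalence, and composing along a zig-zag is exactly how the paper intends this corollary to follow from its main results. Two small remarks on how this sits relative to the paper's own architecture. First, the paper in fact proves the weak-equivalence case of this statement \emph{directly} as Theorem~\ref{theorem:morita.groupoid.cohomology}, via the commutative diagram relating $\Ch(\CG)$, the shift double complex $\Ch(\CG)_{\shift}$, and the $F$-double complex $\DC_F$, together with explicitly constructed local left retractions glued by partitions of unity; the proofs of the big-site theorems are then obtained by applying the functor $\CE$ to those same constructions. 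So your route is logically valid and not circular, but it passes through machinery whose proof is itself built on the direct argument for precisely your special case --- the more economical citation inside the paper is Theorem~\ref{theorem:morita.groupoid.cohomology} plus the zig-zag step. Second, you do not need the full ``sheaf of complexes'' version (Theorem~\ref{thm:big.site.for.complexes}) with its triple-complex bookkeeping: since your $\CE$ has no internal differential, the big-site theorem for sheaves of \emph{modules} already suffices, and the identification $\CE(\CG)\nerv{n} = C^\infty(\CG\nerv{n})$ with the alternating-sum differential is the paper's own example of the case $k=0$ of the sheaves $\Omega^k$. Neither point is a gap; your argument goes through as written.
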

\begin{corollary}[Originally proved in \cite{AriasAbad}]
    Suppose \( \CG \) is a Lie groupoid and suppose we have a representation of \( \CG \) on a vector bundle \( E \to \CG_0 \). Let \( \Ch(\CG,E)  \) be the associated cochain complex of \( \CG \)-cochains with coefficients in \( E \). If \( F \colon \CH \to \CG\) is a weak equivalence and \( \Ch(\CH,F^* E )  \) is the cochain complex of \( F^*E \)-valued cochains then we have that:
    \[ H(\Ch(\CG,E)) \cong H(\Ch(\CH,F^* E))  \]
\end{corollary}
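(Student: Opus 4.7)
My plan is to recognise the complex $\Ch(\CG, E)$ as the complex of global sections of a good cosimplicial complex $\CE_E$ on $\CG$, and then to invoke Theorem~\ref{thm:big.site.for.complexes}, which provides exactly the required Morita invariance of the cohomology of such an object under the given weak equivalence $F\colon \CH \to \CG$.

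The construction of $\CE_E$ goes as follows. At level $n$ of the nerve, the underlying sheaf is the sheaf of smooth sections of $\tau_n^{*}E$, where $\tau_n\colon \CG_n \to \CG_0$ sends a composable string $(g_1, \ldots, g_n)$ to $t(g_1)$ (with $\tau_0 = \Id$). The inner cofaces are the usual pullbacks of sections along the nerve's face maps, which simply compose or omit an arrow away from the final target and therefore preserve $\tau_n^*E$. The outermost coface, which on the nerve drops $g_1$ and shifts the final target from $t(g_1)$ to $t(g_2)$, must be twisted by the $\CG$-representation: a section $\xi$ of $\tau_{n-1}^{*}E$ is sent to $(g_1, g_2, \ldots, g_n) \mapsto g_1 \cdot \xi(g_2, \ldots, g_n)$. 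The codegeneracies are the pullbacks along the insertions of identity arrows. The associated total differential is the alternating sum of these cofaces, which by a direct computation is precisely the usual groupoid coboundary on $\Ch(\CG, E)$, so global sections recover $\Ch(\CG, E)$ as required.

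The main technical step will be verifying the Kan-like goodness condition of Definition~\ref{definition:cosimplicial.complex}. Since the level-$n$ piece $\Gamma(\tau_n^{*}E)$ is a soft sheaf (being the sections sheaf of a vector bundle over a smooth manifold) and the relevant nerve face maps are submersions between Hausdorff manifolds, the required fillings reduce, via partition-of-unity arguments along the fibres, to the classical fact that smooth sections of a vector bundle extend across closed embedded submanifolds. The representation twist appearing on the outer coface is fibrewise linear and invertible, so it does not obstruct these extensions; however, the precise combinatorics of which horn diagrams must admit fillings is exactly where the care is required, and this is where I expect the bulk of the proof to lie.

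Finally, a straightforward naturality check identifies the pullback $F^{*}\CE_E$ with the analogous cosimplicial complex $\CE_{F^*E}$ built from the induced $\CH$-representation on $F^{*}E \to \CH_0$: the nerve map $F_n\colon \CH_n \to \CG_n$ covers the target maps in the sense that $\tau_n^{\CH} = F_0 \circ \tau_n^{\CH}$ factors through $\tau_n^\CG \circ F_n$, so pulling back $\tau_n^{\CG\,*}E$ along $F_n$ yields $\tau_n^{\CH\,*}(F^{*}E)$, and the outer-face action twist pulls back to the twist by the induced $\CH$-action. Applying Theorem~\ref{thm:big.site.for.complexes} to $\CE_E$ then gives $H(\Ch(\CG, E)) \cong H(\CE_E) \cong H(F^{*}\CE_E) \cong H(\Ch(\CH, F^{*}E))$, which is the desired corollary.
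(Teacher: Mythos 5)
Your overall strategy --- realize \( \Ch(\CG,E) \) as the global sections of the cosimplicial module of sections of \( \bt^*E \) over the nerve, check goodness, identify the pullback with the module attached to \( F^*E \), and invoke a Morita-invariance theorem for pullbacks --- is exactly the route the paper takes, via Example~\ref{example:representation.as.cosimplicial.module}, the lemma following it, and Theorem~\ref{thm:weakequivalencecosimplicial}. However, the theorem you cite is the wrong one: Theorem~\ref{thm:big.site.for.complexes} applies to sheaves of complexes \emph{on the big site}, i.e.\ functors defined on all smooth manifolds, and the level-\(n\) sheaf \( \Gamma(\tau_n^*E) \) with its representation-twisted outer coface is manifestly not of that form (it cannot even be written down without reference to the \( \CG \)-action). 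The statement you need is the pullback theorem for \emph{good cosimplicial modules}, Theorem~\ref{thm:weakequivalencecosimplicial} (or its complex version, Theorem~\ref{theorem:morita.invariance.for.pullback.complexes}). The introduction appears to transpose the two labels when stating its Theorems~1.1 and~1.2, which may be the source of the confusion, but as literally written your appeal fails the hypotheses of the theorem invoked.

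The second issue is that the step you defer as ``the bulk of the proof'' --- verifying goodness --- is both left open and made far harder than it is; no softness, partition-of-unity, or extension-across-submanifolds arguments are needed, and there is no horn-filling combinatorics to track. Goodness (Definition~\ref{definition:cosimplicial.module}) only asks that each target family \( \{r^n\} \) over an open \( U \subset \CG_0 \) lift to comorphisms \( (r^n)^\# \) satisfying the target-family relations. But \( r^n \) appends the arrow \( (g_n \cdots g_1)^{-1}\, r^0(\bt(g_n)) \) at the \emph{source} end of the string, so it leaves the final target unchanged: \( \bt \circ r^n = \bt \) on \( U\nerv{n} \). Consequently \( (r^n)^*\bigl(E \fiber{\pi}{\bt} \CG\nerv{n+1}\bigr) \) is canonically \( E \fiber{\pi}{\bt} \CG\nerv{n} \), the lift \( (r^n)^\# \) is the tautological pullback of sections, and the required relations follow from those already established for \( r^n \) itself. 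This is precisely the short argument the paper gives. With the correct theorem cited and this observation supplied, your proof closes.
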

Another corollary relates the cohomologies of the Bott-Schulman double complex of Morita equivalent Lie groupoids.
\begin{corollary}
    Suppose \( \CG \) is a Lie groupoid and let \( \Omega^\bullet(\CG\nerv{\bullet} ) \) be the Bott-Schulman double complex. Write \( H_{BS}(\CG) \) to denote the total cohomology of this double complex. If \( \CG \) and \( \CH \) are Morita equivalent groupoids then:
    \[ H_{BS} (\CG) \cong H_{BS} (\CH) \]
\end{corollary}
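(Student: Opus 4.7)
The plan is to derive this as a direct consequence of the first main theorem (Theorem~\ref{theorem:morita.invariance.for.pullback.complexes}), applied to the de Rham sheaf $\Omega$ on the big site of smooth manifolds. The key observation is that the Bott-Schulman double complex $\Omega^\bullet(\CG\nerv{\bullet})$ is exactly what one obtains by evaluating the big-site sheaf of complexes $\Omega$ on the nerve of $\CG$: at simplicial degree $q$ one gets $\Omega^\bullet(\CG\nerv{q})$, and the two differentials are the de Rham differential (coming from the internal complex structure of $\Omega$) and the alternating sum of pullbacks along the face maps of the nerve (coming from the simplicial structure).

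Granting this identification, the first step is to verify that $\Omega$ really is a sheaf of complexes on the big site in the sense of the paper: for each smooth manifold $M$ we have the de Rham complex $\Omega^\bullet(M)$, this is a complex of $\Cinf_M$-modules (via the module structure on forms), pullback along smooth maps is functorial, and differential forms satisfy descent with respect to open covers. Once this is in place, Theorem~\ref{theorem:morita.invariance.for.pullback.complexes} immediately yields $H(\Omega(\CG)) \cong H(\Omega(\CH))$ whenever $F \colon \CH \to \CG$ is a weak equivalence.

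To finish, I would unpack the definition of the cohomology $H(\CE(\CG))$ in the case $\CE = \Omega$ and check that it agrees with $H_{BS}(\CG)$ as defined via the total complex of the Bott-Schulman double complex. This is a matter of comparing two totalizations of the same bicomplex and should be essentially definitional. Finally, since Morita equivalence of Lie groupoids is generated by zig-zags of weak equivalences, composing the resulting isomorphisms along a zig-zag connecting $\CG$ and $\CH$ gives $H_{BS}(\CG) \cong H_{BS}(\CH)$.

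The main obstacle, as far as I can see, is the bookkeeping in the second paragraph: confirming that the abstract construction $\CE(\CG)$ coming from the framework coincides, with the correct signs and grading conventions, with the classical Bott-Schulman total complex. There is no deep technical content here, but one must check that the simplicial differential induced from the nerve structure on $\CG\nerv{\bullet}$ matches the alternating-sum-of-face-pullbacks differential in the Bott-Schulman construction, and that the de Rham grading in the big-site sheaf lines up with the internal grading of the double complex.
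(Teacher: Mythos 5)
Your proposal is correct and follows the paper's intended route: the paper treats the all-degrees de Rham sheaf $\Omega$ as its model example of a cosimplicial complex and derives this corollary from the big-site statement (Theorem~\ref{thm:big.site.for.complexes} in the body — note the introduction's labels for the two main theorems appear swapped, so the result you are invoking is the ``sheaf of complexes on the big site'' version rather than the pullback version), with $H(\Omega(\CG))$ identified with $H_{BS}(\CG)$ by definition of the total cohomology of a cosimplicial complex. The remaining steps — checking $\Omega$ is a big-site sheaf of complexes and composing isomorphisms along a zig-zag of weak equivalences — are exactly as the paper intends.
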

Our last corollary is more of a corollary of the proofs than the actual conclusions of Theorem 1.1 and 1.2. It says that the conclusions of these theorems still hold if we pass to the appropriate notion of compactly supported cohomology.
\begin{corollary}
    Given \( \CE \) an arbitrary cosimplicial complex over \( \CG \). Let \( \CE_C\) denote the (pre)sheaf of sections of \( \CE \) which are compactly supported (relative to the topology on the orbit space \( \CG_0/\CG_1 \). Then \( \CE_C \) is a subcomplex with associated cohomology denoted \( H_C(\CE) \).
    
    Theorem 1.1 still holds if we replace the last equation with:
    \[ H_C(\CE) \cong H_C(F^* \CE ) \]
    and Theorem 1.2 still holds if we replace the last equation with:
    \[ H_C(\CE(\CG)) \cong H_C(\CE(\CH)) \]
\end{corollary}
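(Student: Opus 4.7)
The plan is to revisit the proofs of the two main theorems and verify that every chain map and homotopy built there restricts to the subcomplex $\CE_C$ of orbit-compactly-supported sections. A first preliminary is to check that $\CE_C$ is indeed a subcomplex: the cosimplicial face and degeneracy maps of $\CE$ are induced by the simplicial structure maps of the nerve $\CG\nerv{\bullet}$, all of which cover the identity on the orbit space $\CG_0/\CG_1$, so they carry orbit-compactly-supported sections to orbit-compactly-supported sections. Combined with the fact that the internal differential of $\CE$ is local in the base, this shows $\CE_C$ is closed under all operations needed to form the total complex, and its cohomology $H_C(\CE)$ is well defined.

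For the sheaf-on-the-big-site statement, the quasi-isomorphism $\CE(\CG) \simeq \CE(\CH)$ is realized by pullback along $F$ together with a contracting homotopy built from partitions of unity along $F_0$ on the various fibered products $\CG\nerv{k}$. Since $F\colon \CH \to \CG$ is a weak equivalence, the induced map $\bar F \colon \CH_0/\CH_1 \to \CG_0/\CG_1$ is a homeomorphism, so pullback along $F\nerv{k}$ automatically preserves orbit-compactness of support. The contracting homotopy preserves orbit-compact support provided the partitions of unity are chosen to be orbit-saturated; such partitions can be manufactured by pulling back a locally finite cover of the orbit space through the quotient map, together with a standard paracompactness argument, so the whole chain homotopy restricts to $\CE_C$.

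For the good-cosimplicial-complex statement, the homotopy between $\CE$ and $F^* \CE$ is produced by iterated Kan-like fillers applied to sections on $\CG\nerv{\bullet}$. The main obstacle, and the step I expect to require the most care, is that a single Kan filler is a priori an extension that may enlarge support beyond the orbit-saturation of the input. I would handle this by refining the construction to apply the good condition over orbit-saturated open covers: because the face maps of the nerve lie over the identity on the orbit space, one can always shrink a filler so that its support is contained in the orbit-saturation of the input data. With this refinement each stage of the homotopy remains in $\CE_C$, and the very same cochain calculations that prove the two unrestricted Morita invariance theorems yield their compactly supported analogues without further modification.
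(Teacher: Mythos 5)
Your overall strategy is the right one, and it is the only one available: the paper gives no separate argument for this corollary, stating explicitly that it is ``a corollary of the proofs'' of the two main theorems, so the content really is a walk through those proofs checking that every chain map and every homotopy operator preserves orbit-compact support. Your first two paragraphs are essentially correct, although the requirement that the partitions of unity be ``orbit-saturated'' is not needed: multiplication by \emph{any} smooth function can only shrink supports, and the partitions used in Lemma~\ref{lemma:local.to.global.left.retract} are in any case pulled back from a cover of the base \( B \) compatibly with the face maps, so the locally finite sum \( \sum_a \rho_a^{(n)} \theta_a^n(e) \) is supported inside the saturation of \( \operatorname{supp}(e) \) automatically.

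The third paragraph, however, misidentifies the mechanism in the good-cosimplicial-complex case. There is no ``Kan filler'' to choose or to shrink: the goodness condition (Definition~\ref{definition:cosimplicial.module}) hands you a \emph{fixed} family of \( C^\infty \)-linear comorphisms \( ((r^n)^\#, r^n) \) lifting the target family, and a comorphism over a map \( r^n \) sends a section \( e \) to something supported inside \( (r^n)^{-1}(\operatorname{supp} e) \) by \( C^\infty \)-linearity. Since every underlying map appearing in the proofs --- face and degeneracy maps, the target families \( r^n \), the retractions \( \rho^m \) of Theorem~\ref{theorem:morita.groupoid.cohomology}, the projections \( P_F \), and \( F_{\shift} \) --- descends to the identity on the orbit space (or to the homeomorphism \( \CH_0/\CH_1 \to \CG_0/\CG_1 \) induced by the weak equivalence), these preimages always lie inside the orbit-saturation of the input's support, and orbit-compactness is preserved with no refinement of the construction. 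So the ``step requiring the most care'' in your proposal is solving a problem that does not occur; the correct justification is simpler and purely formal, and once you state it the compactly supported versions of Theorems~\ref{theorem:morita.invariance.for.pullback.complexes} and~\ref{thm:big.site.for.complexes} follow by running the original proofs verbatim on the subcomplexes \( \CE_C \).
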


\subsection*{Our Approach}
Our philosophy is to approach the topic in a relatively low-tech way. In particular, we wish to write the article in a way accessible to those mainly familiar with the smooth theory of Lie groupoids without relying on higher structures more than necessary. The primary tools we will use come from the structure and theory of simplicial manifolds and cosimplicial vector spaces. 

Much of what we discuss in this article can be generalized beyond the context of Lie groupoids and can be adapted to the study of simplicial manifolds (satisfying some appropriate Kan condition). However, fully extending our work would require a bit of theoretical development beyond what we present here. This could be the topic of a future article if there is interest.

A key component of our approach is the notion of a Decalage (or the shift double of a simplicial set). The shift double of a simplicial set is a way of converting any simplicial set into a homotopy equivalent (and relatively simple) double simplicial set. The proofs of our main theorems rely on showing that the associated morphism at the level of complexes factors through the shift double in a ``nice'' way.

The article is organized as follows. A brief introduction to the background and notations is included in Section \ref{sec:background}. We explain the Decalages construction in Section \ref{sec:shifting simplicial structures}. In Section \ref{sec:augumentations and retracts}, we discuss retracts of simplicial manifolds and cosimpicial vector spaces. The discussion of shifted retractions of groupoids is presented in Section \ref{sec:shifted retractions of groupoids}. In Section \ref{sec:groupoid homomorphisms}, we explain the double groupoid associated with a groupoid homomorphism. Cohomology of sheaves of cosimplicial modules on a groupoid $\mathcal{G}$ is introduced in Section \ref{sec:cosimplicial modules over groupoids}. Morita invariance of cohomology of sheaves of cosimplicial modules on groupoids, Theorem \ref{theorem:morita.groupoid.cohomology}, is established in Section \ref{sec:morita invariance}. We prove a generalized version of Morita invariance of the Bott-Shulman cohomology via sheaves of cosimplicial complexes on groupoids (Theorem \ref{thm:big.site.for.complexes}), in Section \ref{sec:cosimplicial complexes over groupoids}.\\

\noindent{\bf Acknowledgments}: We would like to thank 
Marius Crainic, Matias del Hoyo, and Rajan Mehta for inspiring discussions. Villatoro's research was partially supported by the NSF grant DMS-2137999. Tang's research was partially supported by the NSF grants DMS-1952551, DMS-2350181, and Simons Foundation grant MPS-TSM-00007714.

\section{Background and Notation}
\label{sec:background}
\subsection{Lie groupoids notation}
If \( \CG \) is a Lie groupoid we write \( \CG_1 \) and \( \CG_0 \) to denote the arrows and objects respectively. We use the notation:
   \[ \CG\nerv{n} := \overbrace{\CG_1 \fiber{\bs}{\bt} \CG_1 \fiber{\bs}{\bt} \cdots \fiber{\bs}{\bt} \CG_1}^{n\text{-times}} \]
to denote the space of \( n \) composable arrows.

Many of the objects we consider will have their own notions of ``source'' and ``target.'' If necessary to disambiguate we may use superscripts \( \bs^\CG \colon \CG_1 \to \CG_0 \) or \( \bt^\CG \colon \CG_1 \to \CG_0 \) to indicate we are specifically referring to the source map (or target map) for the groupoid \( \CG \). However, in order to reduce clutter in our notation we may often omit these superscripts when it is clear from context. We will also consider the source and target maps for a groupoid to be defined for tuples of composable arrows with the convention:
\[ \bs(g_n,\ldots , g_1) := \bs(g_1) \qquad \bt(g_n, \ldots, g_1) := \bt(g_n) \]
Hence, for example, given a smooth function \( f \colon N \to \CG_0 \) we have that:
\[ \CG\nerv{n} \fiber{\bs}{f} N = \{ (g_n, \ldots , g_1, x) \in \CG^n \times N \ : \ (g_n,\ldots,g_1) \in \CG\nerv{n} \text{ and } \bs(g_1) = f(x) \}  \]

\subsection{Simplicial and cosimplicial structures}
The sets of composable arrows of a Lie groupoid have a special structure which is formally referred to as a simplicial set (or manifold in the case of a Lie groupoid).
\begin{definition}\label{defn:simplicial.set.concrete}
    A \emph{simplicial set} \( S \) consists of the following data: 
    \begin{itemize}
        \item a collection of sets indexed by natural numbers \( \{ S\nerv{n} \}_{n \in \NN}\);
        \item for each \( n \ge 0 \) and \( 0 \le i \le n \), a function:
    \[d^n_i \colon S\nerv{n} \to S\nerv{n-1} \]
    called the \emph{face maps}.
    \item for each \( n \ge 0 \) and \( 0 \le i \le n\), a function:
    \[  s^n_i \colon S\nerv{n} \to S\nerv{n+1}  \]
    called \emph{degeneracy maps}.
    \end{itemize}
    The face maps and degeneracy maps are required to satisfy the following relations:
    \begin{enumerate}[(SS1)]
        \item \( \forall 0 \le i < j \le n+1\) 
        \[ d^n_i \circ d^{n+1}_j = d^n_{j-1} \circ d^n_i  ; \]
        \item \( \forall 0 \le i < j \le n+1 \), \( n \ge 1\) 
        \[ d^{n+1}_i \circ s^{n}_j = s^{n-1}_{j-1} \circ d^n_i;
        \]
        \item \( \forall 0 \le i < n \) 
        \[ d^{(n+1)}_i \circ s^{n}_i = d^{{n+1}}_{i+1} \circ s^{n}_{i} = \Id ; 
        \]
        \item \( \forall 0 \le j +1 < i \le n+1 \), \( n \ge 1\)
        \[ d^{n+1}_i \circ s^{n}_{j} = s^{n-1}_j \circ d^n_{i-1};
        \]
        \item \( \forall 0 \le i <  j \le n+1 \),
        \[ s^{n+1}_j \circ s^n_i = s^{n+1}_{i} \circ s^n_{j-1}. 
        \]
    \end{enumerate}
\end{definition}
Although there are many kinds of simplicial sets. The most important kind for our purposes arises from a groupoid. The simplicial set associated to a groupoid is called the nerve of the groupoid.

Since the structure of a groupoid is completely determined by its nerve, we may sometimes conflate the two objects.
\begin{definition}
    Suppose \( \CG_1 \grpd \CG_0 \) is a groupoid. The \emph{nerve} of \( \CG \) is the simplicial set \(\{ \CG\nerv{n} \}_{n \in \mathbb{N}} \) where we recall that:
    \[ \CG\nerv{n} := \overbrace{\CG_1 \fiber{\s}{\t} \CG_1 \fiber{\s}{\t} \cdots \fiber{\s}{\t} \CG_1}^{n\text{-times}}. \]
    By convention, we let \( \CG\nerv{0} = \CG_0 \) be the space of objects.
    The face maps are defined to be:
        \[ d^n_i(g_n,\ldots, g_1) := \begin{cases}
        (g_n,\ldots,g_2), & i = 0, \\
        (g_n,\ldots, g_{i+2},\  g_{i+1} \cdot g_i\ , g_{i-1}, \ldots g_1), & 0< i < n,\\
        (g_{n-1}, \ldots , g_1), & i = n,
    \end{cases}\]
    when \( n \ge 2\). In the case \( n = 1\) the face maps are just the target and source maps:
    \[ d^1_0 = \bt \qquad d^1_1 = \bs . \]
    The degeneracy maps are:
    \[  s^n_i(g_{n}, \ldots , g_1) := (g_{n}, \ldots , g_{i+1}, (u\circ t)(g_i) , g_i , \ldots , g_1 )\]
    where, in the above \( u \colon M \to \CG \) is the unit embedding. By convention \( s^0_0 := u \). 
    We leave it to the reader to verify that the axioms of simplicial sets hold.
\end{definition}
In general, a simplicial object in a category consists of a collection of objects in that category, together with morphisms that satisfy axioms (SS1-5). However, in some cases it is reasonable to require some additional surjectivity assumptions. This is the case for simplicial manifolds:
\begin{definition}\label{defn:simplicial.manifold}
    A \emph{simplicial manifold} is a simplicial set \( M = \{ M\nerv{n} \}_{n \in \NN } \) where each \( M\nerv{n}\) is equipped with the structure of a smooth manifold and the face and degeneracy maps are smooth maps of manifolds. Finally, we require that the face maps are submersions.
\end{definition}
The nerve of a Lie groupoid is an example of a simplicial manifold.

In the same vein, there is a concrete definition of a cosimplicial set in terms of sets and functions. We will write out the definition here for completeness but, as we see, the only distinction is reversing domain and codomain, as well as the order of composition.
\begin{definition}\label{defn:cosimplicial.set.concrete}
A \emph{cosimplicial set} \( S \) consists of the following data:
   \begin{itemize}
        \item a collection of sets indexed by non-negative natural numbers \( \{ S\nerv{n} \}_{n \in \NN}\);
        \item for each \( n \ge 0 \) and \( 0 \le i \le n \), functions:
    \[\phi^n_i \colon S\nerv{n-1} \to S\nerv{n} \]
    called the \emph{coface maps};
    \item for each \( n \ge 0 \) and \( 0 \le i \le n\), functions:
    \[  \sigma^n_i \colon S\nerv{n+1} \to S\nerv{n}  \]
    called \emph{codegeneracy maps}.
    \end{itemize}
    This data is required to satisfy the following axioms:
    \begin{enumerate}[(CS1)]
        \item \( \forall 0 \le i < j \le n+1\) 
        \[  \phi^{n+1}_j \circ \phi^n_i= \phi^{n+1}_i \circ  \phi^n_{j-1}, \]
        \item \( \forall 0 \le i < j \le n \) , \( n \ge 1\)
        \[  \sigma^{n}_j \circ \phi^{n+1}_i=   \phi^n_i \circ \sigma^{n-1}_{j-1} ,\]
        \item \( \forall 0 \le i < n \) 
        \[  \sigma^{n}_i \circ \phi^{n+1}_i =  \sigma^{n}_{i} \circ  \phi^{n+1}_{i+1} = \Id , \]
        \item \( \forall 0 \le j + 1 < i \le n+1 \), \( n \ge 1\)
        \[ \sigma^{n}_{j} \circ \phi^{n+1}_i  =  \phi^n_{i-1} \circ  \sigma^{n-1}_j,  \]
        \item \( \forall 0 \le i <  j \le n+1 \),
        \[ \sigma^{n+1}_j \circ \sigma^n_i = \sigma^{n+1}_{i} \circ \sigma^n_{j-1}. \]
    \end{enumerate}
\end{definition}

\subsection{(Co)simplicial vector spaces}
A particularly type of cosimplicial set relevant to our discussions are ones which are formed from vector spaces and linear maps. It is well known that cosimplicial vector spaces have a natural cohomology theory associated to them.

\begin{definition}\label{defn:cosimplicial.vector.space}
Let \( \kk \) be a fixed field.
    A \emph{(co)simplicial vector space} is a cosimplicial set \( V = \{ V\nerv{n} \}\) where each \( V\nerv{n} \) is a vector space and the (co)face and (co)degeneracy maps are linear maps.
\end{definition}
Throughout this article we will take the base field to be \( \mathbb{R}\) since that is the one that will be most relevant to us. However, cosimplicial vector spaces make sense over any field.

One important feature of (co)simplicial vector spaces is their simplicial (co)homology. This arises from what we will call the \emph{simplicial differential}. We will define the ``co'' version of this object but the following definitions and proofs are essentially identical for the standard case.
\begin{definition}\label{defn:cosimplicial.differential}
    Suppose \( V \) is a cosimplicial vector space. The \emph{cochain complex} of \( V \) is the graded vector space:
    \[ \Ch(V) := \bigoplus_{ n \in \mathbb{N}} V\nerv{n} .  \]
    The \emph{simplicial differential} on \( V \) is the degree one graded endomorphism 
    \[ \delta \colon \Ch(V) \to \Ch(V) \]
    with \(n\)-th homogeneous component given by:
    \[ \delta^n := \sum_{i=0}^{n+1} (-1)^i \phi^n_i \colon V\nerv{n} \to V\nerv{n+1}.
    \]
\end{definition}
It is a classical fact that this differential squares to zero as a direct consequence of the face map relations. We include the calculation below for completeness.
\begin{lemma}\label{lemma:cosimplicial.differential.squares.to.zero}
    If \( V \) is a cosimplicial vector space and \( \delta \) is the simplicial differential, then \( \delta^2 = 0\). 
\end{lemma}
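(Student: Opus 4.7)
The plan is to carry out a standard direct computation: expand $\delta \circ \delta$ as a double sum, split it according to whether the two indices satisfy $i < j$ or $i \ge j$, apply the coface relation (CS1) to one half, reindex, and observe that the two halves cancel term by term.

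Concretely, I would begin by writing
\[ \delta^{n+1} \circ \delta^n = \sum_{j=0}^{n+2} \sum_{i=0}^{n+1} (-1)^{i+j}\, \phi_{j} \circ \phi_{i}, \]
suppressing the superscripts on the coface maps for readability. I then split this into sums over the two index sets $A = \{(i,j) : 0 \le i < j \le n+2\}$ and $B = \{(i,j) : 0 \le j \le i \le n+1\}$, which partition the full range of summation.

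On $A$, axiom (CS1) lets me substitute $\phi_j \circ \phi_i = \phi_i \circ \phi_{j-1}$. After the change of variables $i' := j-1$ and $j' := i$, the constraint $i < j$ becomes $j' \le i'$, so $(i',j')$ ranges over exactly $B$, and the composition becomes $\phi_{j'} \circ \phi_{i'}$ as in the sum over $B$. The sign transforms as $(-1)^{i+j} = (-1)^{j' + (i'+1)} = -(-1)^{i'+j'}$, so the reindexed contribution from $A$ is the negative of the sum over $B$. Adding the two halves yields $0$.

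The only point requiring care is the bookkeeping of the sign shift and verifying that the reindexed range of $A$ really matches $B$; there is no conceptual obstacle, since the coface identities (CS1) were engineered precisely to force this cancellation, mirroring the classical argument for the singular or simplicial differential.
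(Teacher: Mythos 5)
Your proposal is correct and follows essentially the same argument as the paper: expand the double sum, split it into the regions $i<j$ and $j\le i$, apply (CS1) to the first region, reindex, and observe the term-by-term cancellation. The sign bookkeeping and the verification that the reindexed range matches are handled correctly.
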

\begin{proof}
We must show that
    \[ \forall n \ge 0 ,\qquad \delta^{n+1} \delta^n = 0 . 
    \]
From the definition of a cosimplicial set, we know that:
\begin{equation}\label{eqn:simplicial.differential.squares.to.zero}
    \forall 0 \le i < j \le n+1, \qquad \phi^n_j  \phi^n_i = \phi^{n+1}_i \phi^n_{j-1}. 
\end{equation}
    Using the definition of the simplicial differential observe that:
    \[ \delta^{n+1} \delta^n = \sum_{j = 0}^{n+2} \sum_{i=1}^{n+1} (-1)^{i+j} \phi^{n+2}_j \phi^{n+1}_i  \]
    This sum can be split into two parts:
    \[  \delta^{n+1} \delta^n = \left( \sum_{1 \le j \le i \le n+1} (-1)^{i+j} \phi^{n+2}_j \phi^{n+1}_i \right) + \left( \sum_{1 \le i < j \le n+2} (-1)^{i+j} \phi^{n+2}_j \phi^{n+1}_i \right)  \]
    Now if we apply Equation~\ref{eqn:simplicial.differential.squares.to.zero} to the second component of this sum we get:
   \[  \delta^{n+1} \delta^n = \left( \sum_{1 \le j \le i \le n+1} (-1)^{i+j} \phi^{n+2}_j \phi^{n+1}_i \right) + \left( \sum_{1 \le i < j \le n+2} (-1)^{i+j} \phi^{n+2}_i \phi^{n+1}_{j-1} \right)  \]
   We can re-index the second sum to obtain:
    \[  \delta^{n+1} \delta^n = \left( \sum_{1 \le j \le i \le n+1} (-1)^{i+j} \phi^{n+2}_j \phi^{n+1}_i \right) + \left( \sum_{1 \le j \le i \le n+1} (-1)^{i+j+1} \phi^{n+2}_j \phi^{n+1}_{i} \right)  \] 
    Since these two sums differ only by a sign, we conclude that \( \delta^{n+1} \delta^n = 0\).
\end{proof}
\begin{definition}\label{defn:simplicial.cohomology}
    Let \( V \) be a cosimplicial vector space and let \( \delta \) be the associated simplicial differential. The \emph{simplicial cohomology}, denoted \( H^\bullet(V) \) of \( V \) is defined to be the cohomology of the simplicial differential \( H^\bullet(\Ch(V), \delta) \).
\end{definition}
\subsection{Augmentation}

A key notion that we will make use of is that of an augmentation. Traditionally, augmentations are ways of modifying bounded complexes by adding an additional term. They arise in the study of various kinds of resolutions in homological algebra. However, this notion has an analogue for simplicial structures and cosimplicial structures.

We will begin with defining augmentation in the context of simplicial manifolds.

\begin{definition}
    Suppose \( X \) is a simplicial manifold. An \emph{augmentation} of \( X \) consists of a smooth manifold \( Y \) together with a smooth map \( p \colon X\nerv{0} \to Y \) satisfying \( p \circ d_1 = p \circ d_0 \).
\end{definition}
Augmentations of simplicial sets are sometimes thought of as extending the simplicial set by adding a set of ``\( -1 \) simplices.''
\[ 
\begin{tikzcd}
    Y & X\nerv{0} \arrow[l] & X\nerv{1} \arrow[l, shift left] \arrow[l, shift right] &  \cdots \arrow[l, shift left, shift left] \arrow[l] \arrow[l, shift right , shift right] 
\end{tikzcd}\]
Another perspective is to view an augmentation as giving a morphism of simplicial manifolds from \(X  \) to \( Y \) where \( Y \) is thought of as a simplicial manifold where all face/degeneracy maps are the identity:
\[ 
\begin{tikzcd}
    X\nerv{0} \arrow[d, "p"] & X\nerv{1} \arrow[d] \arrow[l, shift left] \arrow[l, shift right] &  \cdots \arrow[l, shift left, shift left] \arrow[l] \arrow[l, shift right , shift right] \\
    Y  & Y \arrow[l, shift left] \arrow[l, shift right] &  \cdots \arrow[l, shift left, shift left] \arrow[l] \arrow[l, shift right , shift right] 
\end{tikzcd}
\]
\begin{example}
    If \( X \) is the nerve of a Lie groupoid \( \CG \) then one can think of an augmentation as a groupoid homomorphism \( P \colon \CG \to 1_Y \) where \( 1_Y \) is the identity groupoid over \( Y \).
\end{example}
There is a corresponding definition for cosimplicial vector spaces.
\begin{definition}
    Suppose \( V \) is a cosimplicial vector space. An \emph{augmentation} of \( V \) consists of a vector space \( W \) together with a linear map \( L \colon W \to V\nerv{0} \) with the property that \( \phi_1 L = \phi_2 L\)
\end{definition}

Just like with simplicial structures, augmentations of cosimplicial structures can be thought of as extensions of the simplicial structure of \( V \) or as a map from a standard vector space into a simplicial one.
\begin{example}
If \( X \) is a simplicial manifold then \( C^\infty(X) \) inherits the structure of a cosimplicial vector space. Furthermore, if \( f \colon X \to Y  \) is an augmentation of \( X \) then the corresponding linear map \( f^* \colon C^\infty(Y) \to C^\infty(X) \) is an augmentation of \( C^\infty(X) \).
\end{example}
Recall that one can construct a natural cochain complex out of any cosimplicial vector space. The notion of an augmentation carries over to this context as well.
\begin{definition}
    Suppose \( C \) is a cochain complex. An \emph{augmentation} of \( C \) consists of a linear map:
    \[ L \colon V \to C^0\]
    with the property that \( d^0 L = 0 \).

    If \( L \) is an augmentation of \( C \) then the \emph{augmented complex} \( C_L \) is the one obtained by extending \( C \) with a ``-1'' term.
    \[
    C_L := \left( \begin{tikzcd}
        V \arrow[r, "L"] & C^0 \arrow[r] & C^1 \arrow[r] & C^2 \arrow[r] & \cdots 
    \end{tikzcd} \right)
    \]
    An augmentation \( L \) is called \emph{acyclic} if the augmented complex has trivial cohomology groups in all degrees.
\end{definition}
The term ``acyclic'' here is not necessarily standard in the existing literature. We do not use the word ``exact'' because an exact augmentation usually refers to a weaker condition\footnote{Our understanding is that an \emph{exact augmentation} is one where \( H^0(C_L) = 0 \). In other words, it is exact at degree zero.}

An equivalent characterization of an augmentation is to define it as being morphism of cochain complexes of the form:
\[
\begin{tikzcd}
    V \arrow[r] \arrow[d, "L"] & 0 \arrow[d] \arrow[r] & 0 \arrow[d] \arrow[r] & \cdots \\
    C^0 \arrow[r] & C^1 \arrow[r] & C^2 \arrow[r] & \cdots 
\end{tikzcd}
\]
Acyclic augmentations are relevant due to the following observation:
\begin{lemma}
    If \( L \) is an acyclic augmentation of \( C \) then \( H^i(C) = 0 \) for all \( i \ge 1 \) and \( L \colon V \to H^0(C) \) is an isomorphism.
\end{lemma}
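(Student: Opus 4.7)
The plan is to simply unpack the definition of acyclicity degree by degree and observe that the augmented complex $C_L$ differs from $C$ only in degrees $-1$ and $0$, so that all higher cohomology groups of $C_L$ coincide with those of $C$. Concretely, I would first record that for any $i \ge 1$ one has $C_L^i = C^i$ and the differential of $C_L$ in degrees $\ge 1$ is identical to that of $C$; therefore
\[ H^i(C_L) = H^i(C) \qquad \forall i \ge 1. \]
Acyclicity of $C_L$ then immediately yields $H^i(C) = 0$ for all $i \ge 1$.

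Next I would address the two remaining degrees $-1$ and $0$, where $L$ plays a nontrivial role. Since $C$ starts at $C^0$ with no incoming differential, we have $H^0(C) = \ker(d^0 \colon C^0 \to C^1)$. In the augmented complex, however, $C_L^{-1} = V$ with outgoing differential $L$ and $C_L^0 = C^0$ with outgoing differential $d^0$. Computing the cohomology at positions $-1$ and $0$ of $C_L$ gives
\[ H^{-1}(C_L) = \ker(L), \qquad H^0(C_L) = \ker(d^0)/\operatorname{im}(L) = H^0(C)/\operatorname{im}(L). \]
Acyclicity forces both of these to vanish, i.e.\ $\ker(L) = 0$ and $\operatorname{im}(L) = H^0(C)$. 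Observing that $\operatorname{im}(L) \subseteq \ker(d^0) = H^0(C)$ by the augmentation condition $d^0 L = 0$, the induced map $L \colon V \to H^0(C)$ is then both injective and surjective, hence an isomorphism.

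There is essentially no obstacle here: the statement is a direct definitional consequence, and the only thing to be careful about is the indexing convention (that $V$ sits in degree $-1$ of $C_L$, that $C$ has no term in degree $-1$, and that the two complexes agree from degree $0$ onward as graded vector spaces but differ in their interpretation at degree $0$ only through the presence of an incoming map). Once this bookkeeping is fixed, the proof reduces to reading off the cohomology of $C_L$ in three ranges of degrees ($i \le -1$, $i = 0$, and $i \ge 1$) and matching them against the stated conclusions.
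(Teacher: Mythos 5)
Your proof is correct. The paper actually omits a proof of this lemma entirely (it remarks only that the statement ``amounts to the observation that acyclic augmentations are quasi-isomorphisms'' and calls it obvious), and your degree-by-degree unpacking --- identifying $H^i(C_L)$ with $H^i(C)$ for $i \ge 1$, reading off injectivity of $L$ from $H^{-1}(C_L)=\ker(L)=0$ and surjectivity onto $H^0(C)=\ker(d^0)$ from $H^0(C_L)=\ker(d^0)/\operatorname{im}(L)=0$ --- is exactly the standard argument the authors are implicitly invoking.
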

If we think of \( L \) as a morphism of complexes then the above lemma amounts to the observation: ``acyclic augmentations are quasi-isomorphisms.'' In the context of complexes this fact seems rather obvious. However, there is corresponding version of this fact for double and triple complexes which are less obvious and rather useful.

Augmentations of simplicial and cosimplicial structures directly correspond to augmentations of the associated cochain complexes.
\begin{example}
    Suppose \( V \) is a cosimplicial vector space. Let \( \Ch(V) \) be the associated cochain complex. Then \( L \colon W \to V \) is an augmentation of \( V \) as a cosimplicial vector space if and only if \( L \) is an augmentation of \( \Ch(V) \) as a cochain complex.
\end{example}

\subsection{Groupoid cohomology}
We saw that, associated to any cosimplicial vector space, there is an associated cochain complex and cohomology. It is worth paying particular attention to a special case where the cosimplicial vector space arises from a Lie groupoid.

The motivation behind studying this cohomology theory is that it gives a sort of model for the space of functions on the singular space (i.e. the stack) associated to \( \CG \). In fact, when a Lie groupoid is Morita equivalent to a smooth manifold, it turns out that the groupoid cohomology is isomorphic to the space of functions on the orbit space.
\begin{definition}
    Suppose \( \CG \) is a Lie groupoid. For a natural number \( n \) the \emph{cosimplicial algebra} of \( \CG \), written \( \Ch(\CG) \), is defined so that: 
    \[\Ch(\CG)\nerv{n} := C^\infty(\CG\nerv{n}). \] 
    The coface and codegeneracy maps are the pullbacks along the face and degeneracy maps of the nerve:
    \[ \phi^n_i := (d^n_i)^* \qquad \sigma^n_i := (s^n_i)^*. \]
\end{definition}
\begin{definition}
    The \emph{groupoid cochain complex}, also denoted \( \Ch(\CG) \):
    \[ \Ch(\CG) := \left(  
    \begin{tikzcd}
        \CA\nerv{0} \arrow[r, "\delta"] &  \CA\nerv{1} \arrow[r, "\delta"] & A\nerv{2} \arrow[r, "\delta"] & \cdots 
    \end{tikzcd} \right) .
    \]
    The differential here is just the simplicial differential for \(C^\infty(\CG) \). In this case it takes the form of an alternating sum of pullbacks along face maps:
    \[ \delta^n := \sum_{i=0}^{n+1} (-1)^i (d^{n+1}_i)^* \colon C^\infty(\CG\nerv{n}) \to C^\infty(\CG\nerv{n+1} ) .\]
    The cohomology:
    \[ H^{\bullet} (\CG) := H^\bullet(\CA) \]
    is called the \emph{groupoid cohomology} of \( \CG \).
\end{definition}

\subsection{Double complexes}
Double complexes will be a key component of our technical arguments. However, we will not need a full treatment of the topic. In particular, we will not make any (explicit) references to spectral sequences. The reason for this is that the actual properties of double complexes that we rely on will be fairly elementary.

\begin{definition}
    \emph{A double (cochain) complex} consists of a bigraded vector space \( D = \bigoplus\limits_{p,q \in \mathbb{N}} D^{p,q} \) equipped with two bigraded linear endomorphisms:
    \[  d_{H}: D^{\bullet,\bullet} \to D^{\bullet,\bullet+1} \qquad d_V \colon D^{\bullet,\bullet} \to D^{\bullet + 1, \bullet}  \]
    of degree \( (0,1) \) and \( (1,0) \), respectively, satisfying
    \[ d_H^2 = 0, \qquad d_V^2 = 0, \qquad d_H d_V = d_V d_H . \]
    Such a double complex can be illustrated by the following diagram:
\[
\begin{tikzcd}[column sep=large, row sep=large]
 \vdots &  \vdots &  \vdots &  \iddots \\
D^{2,0}  \arrow[r , "{d_H}"] \arrow[u, "{d_V}"] & D^{2,1}  \arrow[r , "{d_H}"] \arrow[u, "{d_V}"] & D^{2,2}  \arrow[r , "{d_H}"] \arrow[u, "{d_V}"] & \cdots \\
D^{1,0}  \arrow[r , "{d_H}"] \arrow[u, "{d_V}"] & D^{1,1}  \arrow[r , "{d_H}"] \arrow[u, "{d_V}"] & D^{1,2}  \arrow[r , "{d_H}"] \arrow[u, "{d_V}"] & \cdots \\
D^{0,0}  \arrow[r , "{d_H}"] \arrow[u, "{d_V}"] & D^{0,1}  \arrow[r , "{d_H}"] \arrow[u, "{d_V}"] & D^{0,2}  \arrow[r , "{d_H}"] \arrow[u, "{d_V}"] & \cdots \\
\end{tikzcd}
\]
\end{definition}
There are many different kinds of cohomologies that can be constructed from a double complex. The most basic ones are the horizontal and vertical cohomologies. These are bigraded vector spaces constructed by applying the cohomology functor to one direction at a time.
\begin{definition}
    \emph{The vertical cohomology} \( H_V(D) \) of a double complex \( D \) is the one obtained by taking the cohomology of each column individually. The vertical cohomology is natural bigraded and:
    \[ H_V^{ij} (D) := H^i( D^{\bullet , j} , d_V).\]
    The horizontal cohomology is defined similarly:
    \[ H_H^{ij} (D) := H^j (D^{i \bullet} , d_H ). \]
\end{definition}
Perhaps the most important cohomology of a double complex is the ``total'' cohomology.
\begin{definition}
    Given a double complex \( D^{\bullet, \bullet} \) with horizontal and vertical differentials \( d_H \) and \( d_V \), the total complex \( \mathrm{Tot}(D^{\bullet, \bullet}) \) is a defined to be:
    \[ D_{\mathrm{tot}}^n = \bigoplus_{p+q=n} D^{p,q}, \]
    with the differential \( d_{\mathrm{tot}}: D_{\mathrm{tot}}^{\bullet} \to D_{\mathrm{tot}}^{\bullet + 1} \) given by:
    \[
    d_{\mathrm{tot}} = d_H + (-1)^p d_V \quad \text{on} \quad D^{p,q}.
    \]
    The total cohomology, simply written \( H(D) \) is the cohomology of the total complex.
\end{definition}
The total cohomology is quasi-isomorphic to the diagonal cohomology, which is defined using the complex $\{D^{k,k}\}_{k\geq0}$. As we will not need this in this article, we will not discuss the diagonal cohomology further. In general, computing the total cohomology of a double complex can be very complicated. Luckily for us, the kinds of double complexes that we will see in this article are comparatively simple. Our primary tool for computing total cohomology is the notion of an augmentation.

The augmentation of a double complex is a fairly natural generalization of the notion we defined for standard complexes. However, the two-dimensional nature of double complexes means that there are actually two natural notions of augmentation.
\begin{definition}
    Suppose \( D \) is a double complex and \( V \) is a cochain complex. A \emph{left augmentation} \( L \colon V \to D \) consists of a morphism of cochain complexes:
    \[ L \colon V \to D^{\bullet,0} \]
    with the property that \( d_H L = 0 \).
    A \emph{bottom augmentation} is a morphism of cochain complexes:
    \[ B \colon W \to D^{0,\bullet} \]
    with the property that \( d_H B = 0 \).

    Left and bottom augmentations can be illustrated by a diagram like so:
\[
\begin{tikzcd}[column sep=large, row sep=large]
\vdots &   \vdots &  \vdots &  \vdots &  \iddots \\
V^2 \arrow[u,"d"] \arrow[r, dashed, "L"] & D^{2,0}  \arrow[r , "{d_H}"] \arrow[u, "{d_V}"] & D^{2,1}  \arrow[r , "{d_H}"] \arrow[u, "{d_V}"] & D^{2,2}  \arrow[r , "{d_H}"] \arrow[u, "{d_V}"] & \cdots \\
V^1 \arrow[u,"d"] \arrow[r, dashed, "L"] & D^{1,0}  \arrow[r , "{d_H}"] \arrow[u, "{d_V}"] & D^{1,1}  \arrow[r , "{d_H}"] \arrow[u, "{d_V}"] & D^{1,2}  \arrow[r , "{d_H}"] \arrow[u, "{d_V}"] & \cdots \\
V^0  \arrow[u,"d"]\arrow[r, dashed, "L"] & D^{0,0}  \arrow[r , "{d_H}"] \arrow[u, "{d_V}"] & D^{0,1}  \arrow[r , "{d_H}"] \arrow[u, "{d_V}"] & D^{0,2}  \arrow[r , "{d_H}"] \arrow[u, "{d_V}"] & \cdots \\
 & W^0 \arrow[u, dashed, "B"] \arrow[r, "d"] & W^1 \arrow[u, dashed, "B"] \arrow[r, "d"] & W^2 \arrow[u, dashed, "B"] \arrow[r, "d"] & \cdots 
\end{tikzcd}
\]
A left augmentation is called \emph{acyclic} if the augmented rows are all exact. In other words, for all \( n \in \mathbb{N} \) we have that:
\[ 
    \begin{tikzcd}
    V^n \arrow[r, "L"] & D^{n,0} \arrow[r, "d_H"] & D^{n,1} \arrow[r, "d_H"] & \cdots 
    \end{tikzcd}
\]
is exact. A bottom augmentation is called \emph{acyclic} if the augmented columns are exact.
\end{definition}
The following lemma is the main reason we care about augmentation of double complexes. It is a fairly standard fact in homological algebra by spectral sequence so we will not provide the proof here.
\begin{lemma}[Augmentation lemma]\label{lemma:augmentation.lemma}
    Suppose \( D \) is a double complex and suppose \( L \colon V \to D \) is an acyclic left augmentation of \( D \).
    
    Then \( L \) induces an isomorphism \( L_* \colon  H(V) \to H(D) \).
\end{lemma}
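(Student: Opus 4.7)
My plan is to identify ``$L_*$ is an isomorphism'' with an acyclicity statement about an enlarged double complex, and then prove that acyclicity by a staircase cancellation argument powered by the row-exactness hypothesis, thereby avoiding spectral sequences in line with the paper's low-tech philosophy.

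First I would enlarge $D$ by adjoining $V$ as a column at horizontal index $-1$: set $\bar{D}^{p,-1} := V^p$ and $\bar{D}^{p,q} := D^{p,q}$ for $q \geq 0$, with the horizontal arrows on the new column given by $L$ and the vertical arrows given by the differential of $V$. The hypotheses that $L$ is a chain map and that $d_H L = 0$ are exactly what is required for $\bar{D}$ to be a bona fide double complex. The inclusion $\mathrm{Tot}(D) \hookrightarrow \mathrm{Tot}(\bar{D})$ has quotient a shifted (and sign-twisted) copy of $V$, giving a short exact sequence of complexes whose associated long exact sequence realizes $L_*$ (up to a harmless sign) as the connecting homomorphism. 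Thus $L_*$ is an isomorphism in every degree if and only if $\mathrm{Tot}(\bar{D})$ is acyclic.

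By hypothesis, the augmented rows of $D$ are exact, i.e., every row of $\bar{D}$ is an exact complex. So the problem reduces to the following general claim: any double complex $E$ supported in $p \geq 0$, $q \geq q_0$ with exact rows has acyclic total complex. I would prove this by a zigzag. Given a cocycle $z = \sum_{p} z^{p, n-p} \in \mathrm{Tot}(E)^n$, let $p_0$ be the smallest $p$ with $z^{p, n-p} \neq 0$. The cocycle condition for the $(p_0, n - p_0 + 1)$-component of $d_{\mathrm{tot}} z$ reads
\[
d_H z^{p_0, n - p_0} + (-1)^{p_0 - 1} d_V z^{p_0 - 1, n - p_0 + 1} = 0,
\]
and the second term vanishes by the minimality of $p_0$; hence $d_H z^{p_0, n - p_0} = 0$. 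Exactness of the $p_0$th row supplies $w \in E^{p_0, n - p_0 - 1}$ with $d_H w = z^{p_0, n - p_0}$; then $z - d_{\mathrm{tot}} w$ is a cocycle whose minimum $p$-index in the support is strictly larger than $p_0$. Iterating terminates after finitely many steps because in total degree $n$ the $p$-support is bounded above by $n - q_0 + 1$, and the resulting accumulated correction expresses $z$ as a coboundary.

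The main obstacle will be sign bookkeeping. One has to verify carefully that the short exact sequence genuinely identifies the connecting map with $L_*$ (rather than with a sign-twisted version thereof that might accidentally fail to be an isomorphism in the required degrees), and that at each step of the zigzag the subtraction $z \mapsto z - d_{\mathrm{tot}} w$ does not re-populate positions already cleared --- this is ensured by the fact that $d_V$ only increases $p$, but must be checked explicitly. These steps are mechanical, but sign mishaps propagate through the entire argument, so this is where the real delicacy lies.
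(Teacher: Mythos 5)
Your proof is correct, and it necessarily takes a different route from the paper, because the paper offers no proof at all: it declares the lemma ``a fairly standard fact in homological algebra by spectral sequence'' and moves on. Your steps --- adjoining \( V \) as a column at horizontal index \( -1 \), identifying \( L_* \) with the connecting homomorphism of \( 0 \to \mathrm{Tot}(D) \to \mathrm{Tot}(\bar D) \to V[1] \to 0 \) (an identification that is clean, with no sign twist, because the \( d_V \)-component of \( d_{\mathrm{tot}} \) vanishes on the lift of a cocycle of \( V \)), and then killing cocycles of the row-exact complex \( \bar D \) by the staircase --- are all sound, and the staircase terminates because each \( \mathrm{Tot}(\bar D)^n \) is a finite direct sum, which is precisely the finiteness that makes the spectral-sequence argument converge. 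The elementary argument is arguably the better fit for the paper's stated low-tech philosophy, and it produces an explicit correction term row by row, which is useful for the compactly supported variant mentioned in the introduction. Two points to pin down in the write-up. First, your claim that ``every row of \( \bar D \) is an exact complex'' must include exactness at the \( V^n \) spot, i.e.\ injectivity of \( L \); without this the lemma is false (take \( D = 0 \) and \( V \neq 0 \)), and your staircase genuinely uses it at the base case, when the minimal nonzero component of a cocycle sits in the adjoined column. This is what the paper's definition of acyclicity intends (``trivial cohomology groups in all degrees''), but the double-complex phrasing is loose enough to be misread, so state it explicitly. Second, regarding the sign bookkeeping you flag as the main obstacle: the paper's displayed convention \( d_{\mathrm{tot}} = d_H + (-1)^p d_V \) on \( D^{p,q} \) with \emph{commuting} differentials does not in fact square to zero --- the sign must be carried by the index that \( d_H \) moves, i.e.\ \( (-1)^q \), or else the differentials must anticommute. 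Your argument is insensitive to which consistent fix is chosen, since the componentwise cocycle equations and the connecting-map computation only see \( d_V \) up to sign, but you should settle the convention before carrying out the bookkeeping.
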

In other words, acyclic left/bottom augmentations are quasi-isomorphisms with respect to the total complex.

\section{Shifting (Co)simplicial structures (Decalages)}\label{sec:shifting simplicial structures}
Simplicial sets come with a lot of inherited structure. 
We will now highlight some operations one can apply to any simplicial set to construct new simplicial sets. 
In the existing literature this operation is referred to as the ``decalage'' of a simplicial set. 
In this article we will simply refer to the operation as the left/right shift of a simplicial set.

One can find more discussion of this topic in Illusie\cite{Illusie} and  but we will give a self contained explanation here. 
Our philosophy will be to keep the terminology fairly grounded and proofs quite low tech.

\subsection{Left and right shifts of simplicial structures}
\begin{definition}
    Suppose \( S \) is a simplicial set with face maps \( \{ d^n_i \} \) and degeneracy maps \( s^n_i \). If \( k \ge 0 \) is a natural number the \( k \)-th left shift of \( S \) is the simplicial set \( S_L[k]\) where:
    \[ S_L[k]\nerv{n} := S\nerv{k+n} \]
    and with face and degeneracy maps:
    \[ (d_L[k])^n_i := d^{k+n}_{i}, \qquad (s_L[k])^n_i := s^{k+n}_i . \]
    The \emph{right shift} is denoted \( S_R[k] \) and also has:
    \[ S_R[k] \nerv{n} := S\nerv{k+n} \]
    but with face and degeneracy maps:
    \[ (d_R[k])^n_i := d^{k+n}_{k+i}, \qquad (s_R[k])^n_i := s^{k+n}_{k+i} .\]
\end{definition}\begin{figure}
    \centering
\begin{tikzpicture}
    \node (f50) at (-1, 4) {$\vdots$};
    \node (f51) at (1, 4) {$\vdots$};
    \node (f52) at (3, 4) {$\vdots$};
    \node (f53) at (5, 4) {$\vdots$};
    \node (f54) at (7, 4) {$\vdots$};
    \node (f55) at (9, 4) {$\vdots$};

    \node (f40) at (0, 3) {$d^4_0$};
    \node (f41) at (2, 3) {$d^4_1$};
    \node (f42) at (4, 3) {$d^4_2$};
    \node (f43) at (6, 3) {$d^4_3$};
    \node (f44) at (8, 3) {$d^4_4$};
    
    \node (f30) at (1, 2) {$d^3_0$};
    \node (f31) at (3, 2) {$d^3_1$};
    \node (f32) at (5, 2) {$d^3_2$};
    \node (f33) at (7, 2) {$d^3_3$};
    
    \node (f20) at (2, 1) {$d^2_0$};
    \node (f21) at (4, 1) {$d^2_1$};
    \node (f22) at (6, 1) {$d^2_2$};
    
    \node (f10) at (3, 0) {$d^1_0$};
    \node (f11) at (5, 0) {$d^1_1$};

    \draw[thick] (2,4) -- (5, 1.5) -- (7, 1.5) -- (10, 4);
\end{tikzpicture}
    \caption{Face maps above the line are the ones included in the \( S_R[2] \)}
    \label{fig:shift-visualization}
\end{figure}
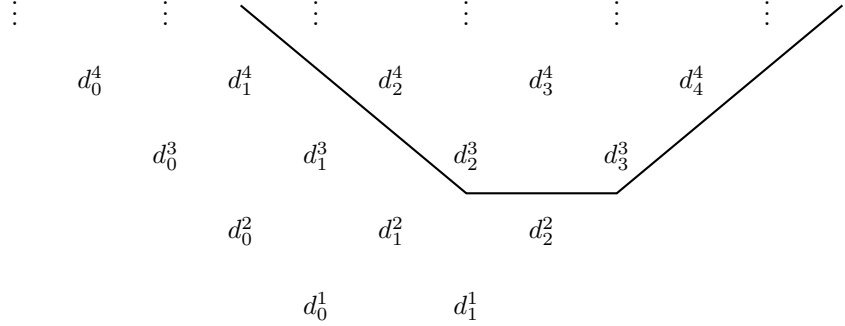
If we arrange the face maps of the simplicial set \( S \) into an infinite triangle, we can visualize the left and right shifts of \( S \) as picking out all elements above a particular node on the boundary. For example, in Figure~\ref{fig:shift-visualization} the second right shift is obtained by picking out all face maps above the face map \( d^2_2 \). Left and right shifts of cosimplicial vector spaces are defined similarly:
\begin{definition}
Suppose \( V \) is a cosimplicial vector space with coface maps \( \{ \phi^n_i \} \) and degeneracy maps \( \sigma^n_i \). If \( k \ge 0 \) is a natural number the \( k \)-th left shift of \( V \) is the simplicial set \( V_L[k]\) where:
\[ V_L[k]\nerv{n} := V\nerv{k+n} \]
and with face and degeneracy maps:
\[ (\phi_L[k])^n_i := \phi^{k+n}_{i} , \qquad (\sigma_L[k])^n_i := \sigma^{k+n}_i . \]
The \emph{right shift} is denoted \( V_R[k] \) and also has:
\[ V_R[k] \nerv{n} := V\nerv{k+n} \]
but with face and degeneracy maps:
\[ (\phi_R[k])^n_i := \phi^{k+n}_{k+i}, \qquad (\sigma_R[k])^n_i:= \sigma^{k+n}_{k+i} . \]
\end{definition}
\subsection{Shift double structures}
Through the use of left and right shifts, one obtains a double simplicial structure from any simplicial set \( S \). Since we are mainly interested in groupoids we will discuss this primarily in the context of Lie groupoids but shift double structures make sense for any simplicial object.
\begin{definition}\label{defn:shifted double}
    Let \( \CG \) be a Lie groupoid. The \emph{shift double groupoid}, denoted by \( \CG_{\shift}\) of \( \CG \) is the one obtained by arranging the left shifts of \( \CG \) into the rows and the right shifts of \( \CG \) into the columns as in the below:
\[ 
\CG_{\shift} := \left( 
\begin{tikzcd}
      \vdots \arrow[d, shift left, shift left, shift left] \arrow[d, shift right, shift right, shift right] \arrow[d, shift left] \arrow[d, shift right]  &                                               \vdots \arrow[d, shift left, shift left, shift left] \arrow[d, shift right, shift right, shift right] \arrow[d, shift left] \arrow[d, shift right]  &                                                                                    \vdots \arrow[d, shift left, shift left, shift left] \arrow[d, shift right, shift right, shift right] \arrow[d, shift left] \arrow[d, shift right] & \iddots \\
    \CG\nerv{3} \arrow[d, shift left, shift left] \arrow[d, shift right, shift right] \arrow[d] & \arrow[l, shift left] \arrow[l, shift right] \CG\nerv{4} \arrow[d, shift left, shift left] \arrow[d, shift right, shift right] \arrow[d] & \arrow[l, shift left, shift left] \arrow[l, shift right, shift right] \arrow[l] \CG\nerv{5} \arrow[d, shift left, shift left] \arrow[d, shift right, shift right] \arrow[d] & \arrow[l, shift left, shift left, shift left] \arrow[l, shift right, shift right, shift right] \arrow[l, shift left] \arrow[l, shift right] \cdots  \\
    \CG\nerv{2}  \arrow[d, shift left] \arrow[d, shift right] & \arrow[l, shift left] \arrow[l, shift right]  \arrow[d, shift left] \arrow[d, shift right] \CG\nerv{3} &  \arrow[d, shift left] \arrow[d, shift right] \arrow[l, shift left, shift left] \arrow[l, shift right, shift right] \arrow[l] \CG\nerv{4} & \arrow[l, shift left, shift left, shift left] \arrow[l, shift right, shift right, shift right] \arrow[l, shift left] \arrow[l, shift right] \cdots  \\
    \CG\nerv{1} & \arrow[l, shift left] \arrow[l, shift right] \CG\nerv{2} & \arrow[l, shift left, shift left] \arrow[l, shift right, shift right] \arrow[l] \CG\nerv{3} &\arrow[l, shift left, shift left, shift left] \arrow[l, shift right, shift right, shift right] \arrow[l, shift left] \arrow[l, shift right] \cdots  \\
\end{tikzcd} 
\right).
\]
\end{definition}
\begin{remark}
The columns and rows of the shift double groupoid are the nerves of groupoid structures. They actually come from the natural action of \( \CG \) on itself from the left and right, respectively. We usually think of the rows and columns as being indexed by natural numbers starting at \( 0 \). Therefore, \(0\)-th column corresponds to the nerve of the (left) action groupoid \( \CG \ltimes \CG \grpd \CG \) and the \( n \)-th column corresponds to the nerve of the action groupoid \( \CG \ltimes \CG\nerv{n+1}\).
\end{remark} 
The shift double of a cosimplicial object is defined in a similar way. 
\begin{definition}
Given a cosimplicial vector space \( V \) the \text{shift double} of \( V \), denoted \( V_{\shift} \), is obtained by arranging the left and right shifts into the rows and columns (respectively) of a double structure as below:
\[ 
V_{\shift} := \left( \begin{tikzcd}[arrows=<-]
      \vdots \arrow[d, shift left, shift left, shift left] \arrow[d, shift right, shift right, shift right] \arrow[d, shift left] \arrow[d, shift right]  &                                               \vdots \arrow[d, shift left, shift left, shift left] \arrow[d, shift right, shift right, shift right] \arrow[d, shift left] \arrow[d, shift right]  &                                                                                    \vdots \arrow[d, shift left, shift left, shift left] \arrow[d, shift right, shift right, shift right] \arrow[d, shift left] \arrow[d, shift right] & \iddots \\
    V\nerv{3} \arrow[d, shift left, shift left] \arrow[d, shift right, shift right] \arrow[d] & \arrow[l, shift left] \arrow[l, shift right] V\nerv{4} \arrow[d, shift left, shift left] \arrow[d, shift right, shift right] \arrow[d] & \arrow[l, shift left, shift left] \arrow[l, shift right, shift right] \arrow[l] V\nerv{5} \arrow[d, shift left, shift left] \arrow[d, shift right, shift right] \arrow[d] & \arrow[l, shift left, shift left, shift left] \arrow[l, shift right, shift right, shift right] \arrow[l, shift left] \arrow[l, shift right] \cdots  \\
    V\nerv{2}  \arrow[d, shift left] \arrow[d, shift right] & \arrow[l, shift left] \arrow[l, shift right]  \arrow[d, shift left] \arrow[d, shift right] V\nerv{3} &  \arrow[d, shift left] \arrow[d, shift right] \arrow[l, shift left, shift left] \arrow[l, shift right, shift right] \arrow[l] V\nerv{4} & \arrow[l, shift left, shift left, shift left] \arrow[l, shift right, shift right, shift right] \arrow[l, shift left] \arrow[l, shift right] \cdots  \\
    V\nerv{1} & \arrow[l, shift left] \arrow[l, shift right] V\nerv{2} & \arrow[l, shift left, shift left] \arrow[l, shift right, shift right] \arrow[l] V\nerv{3} &\arrow[l, shift left, shift left, shift left] \arrow[l, shift right, shift right, shift right] \arrow[l, shift left] \arrow[l, shift right] \cdots  \\
\end{tikzcd} \right) . 
\]
\end{definition}
\subsection{Shift double complexes}
If \( V \) is a cosimplicial vector space and \( V_{\shift} \) is its associated shift double, then each row and column of \( V_{\shift} \) is a cosimplicial vector space and has an associated cochain complex.

\begin{definition}
If \( k \) is a natural number then the \( k\)-th (left) shifted differential is defined to be the cosimplicial differential for the \( k \)th shifted complex \( V_L[k] \). More concretely, for \( n \ge 0 \) we have:
\[ \delta_L[k]^n := \sum_{i=0}^{n+1} (-1)^i \phi^{k+n+1}_i \colon V\nerv{n+m} \to V\nerv{n+m+1}. \]
Similarly, the right shifted differential is:
\[ \delta_R[k]^n := \sum_{i=0}^{n+1} (-1)^i \phi^{k+n+1}_{k+i} \colon V\nerv{n+m} \to V\nerv{n+m+1}.\]
\end{definition}
\begin{definition}\label{defn:shift.double.complex}
Let \( V \) be a cosimplicial vector space. The \emph{shift double complex} of \( V \), denoted by \( \Ch(V)_{\shift} \), is the one obtained by taking the horizontal maps to be the left shifts of the simplicial differential and vertical maps to be the right shifts of the simplicial differential as illustrated below:
    \[ \Ch(V)_{\shift} := \left( 
\begin{tikzcd}[column sep=large, row sep = large]
 \vdots &  \vdots &  \vdots &  \iddots \\
V^{(3)}  \arrow[r , "{\delta_L[3]}"] \arrow[u, "{\delta_R [1]}"] & V^{(4)}  \arrow[r , "{\delta_L[3]}"] \arrow[u, "{\delta_R [2]}"] & V^{(5)}  \arrow[r , "{\delta_L[3]}"] \arrow[u, "{\delta_R [3]}"] & \cdots \\
V^{(2)}  \arrow[r , "{\delta_L[2]}"] \arrow[u, "{\delta_R [1]}"] & V^{(3)}  \arrow[r , "{\delta_L[2]}"] \arrow[u, "{\delta_R [2]}"] & V^{(4)}  \arrow[r , "{\delta_L[2]}"] \arrow[u, "{\delta_R [3]}"] & \cdots \\
V^{(1)}  \arrow[r , "{\delta_L[1]}"] \arrow[u, "{\delta_R [1]}"] & V^{(2)}  \arrow[r , "{\delta_L[1]}"] \arrow[u, "{\delta_R [2]}"] & V^{(3)}  \arrow[r , "{\delta_L[1]}"] \arrow[u, "{\delta_R [3]}"] & \cdots \\
\end{tikzcd}\right).
\]
We think of this double complex as being defined for non-negative degree. When indexing this structure we take the first index to indicate the row and the second index to indicate the column so:
\[V_{\shift}\nerv{n,m} := V\nerv{n+m+1} \]
\end{definition}
\begin{example}
    Let \( \CG \) be a Lie groupoid. Let \( \CA \) denote the associated cosimplicial algebra of functions on the nerve of \( \CG \). Then \( \Ch(\CG)_{\shift} \) is the double complex obtained by applying the ``groupoid cochain complex'' functor to the shift double of \( \CG \). We will refer to this as the \emph{shift double complex of \( \CG \)}.
\end{example}

\section{Retractions of (co)simplicial structures}\label{sec:augumentations and retracts}
Many of our arguments will rely on proving that a given left/bottom augmentation is acyclic. This will be primarily accomplished by constructing a homotopy operator that exhibits the fact that all cohomology groups vanish.

\subsection{Retractions along augmentations}
In the simplicial context, the homotopy operators we will construct correspond to the notion of a retraction. Specifically, a retraction along an augmentation of the simplicial structure.
\begin{definition}
Suppose \( X \) is a simplicial manifold and \( p \colon X\nerv{0} \to Y \) is an augmentation. A \emph{left retraction} of \( X \) along \( P \) consists of the following data:

A collection of smooth maps \( \{ r^n \}_{n \in \mathbb{N}} \) where \( r^0 \colon Y \to X\nerv{0}  \) and for \( n \ge 1 \):
    \[ r^n \colon X\nerv{n-1} \to X\nerv{n} \]
    and which satisfies the following properties:
\begin{itemize}
    \item \( r \) intertwines \( d^1_1 \) and \( p \): 
    \[  d^1_1 \circ r^1 = r^0 \circ p ,   \]
    \item for all \( n \ge 1 \):
    \[ d^{n}_0 \circ r^n = \Id ,\]
    \item \( r \) ``shifts'' the face maps to the left\footnote{By shifting to the left we mean that it ``decreases'' the index \( i \) by one. We imagine the face maps \( d^n_i \) as being arranged in increasing order from left to right.}. For all \( 1 \le i \le n + 1\):
    \[ d^{n+1}_i  \circ r^{n+1} = r^n \circ  d^{n}_{i-1}. \]
\end{itemize}
\end{definition}
Left retracts are actually homotopy equivalences between the original simplicial set \( X \) and \( Y \) thought of as a trivial simplicial set. The only difference from the standard theory of simplicial sets is that we are working in the smooth category and so we require that the maps involved are smooth. 

We will not spend much time discussing general homotopy equivalences between simplicial sets formally, however, and we will keep our discussion grounded by speaking in terms of these explicit retractions.

\subsection{Examples of retractions}
We are mainly interested in the nerve of a Lie groupoid. Therefore, our canonical example of a left retract of a Lie groupoid comes from a ``principal'' groupoid.
\begin{definition}
    A Lie groupoid \( \CG \) is \emph{principal} if the isotropy groups are trivial and the orbit space \( B \) admits a smooth structure which makes the projection \( p: \CG_0 \to B \) a submersion.
\end{definition}
Principal groupoids appear frequently in the study of Lie groupoids. They have a few equivalent names and characterizations. We use the term principal because they are precisely the groupoids for which the natural action of \( \CG \) on \( \CG_0 \) constitutes a principal groupoid bundle.
\begin{lemma}\label{example:principal.groupoid.retraction}
Suppose \( \CG \) is a principal Lie groupoid. Suppose  there exists a global section \( \sigma \colon B \to \CG_0 \) of \( p \). Then there exists a left retract of \( \CG \) to \( B \) along \( p \).
\end{lemma}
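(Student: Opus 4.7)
The plan is to construct the retraction by appending, at each simplicial level, a canonical arrow that connects the input string back to the image of the section $\sigma$. Since $\CG$ is principal, the anchor map $(\bt, \bs) \colon \CG_1 \to \CG_0 \fiber{p}{p} \CG_0$ is a smooth bijection (injective because the isotropy is trivial, surjective because two objects lie in the same orbit iff they share a $p$-fiber) and in fact a diffeomorphism, reflecting the standard fact that a principal Lie groupoid presents a principal bundle over its orbit space. Let $\mu \colon \CG_0 \fiber{p}{p} \CG_0 \to \CG_1$ denote its inverse, so that $\mu(y,x)$ is the unique arrow with $\bs(\mu(y,x)) = y$ and $\bt(\mu(y,x)) = x$ whenever $p(y) = p(x)$.

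I would set $r^0 := \sigma$ and $r^1(x) := \mu(\sigma(p(x)), x)$, from which $\bt(r^1(x)) = x$ and $\bs(r^1(x)) = \sigma(p(x))$ are immediate, giving $d^1_1 \circ r^1 = r^0 \circ p$ and $d^1_0 \circ r^1 = \Id$ directly. For $n \ge 2$, the natural inductive definition is
\[ r^n(g_{n-1}, \ldots, g_1) := (g_{n-1}, \ldots, g_1, r^1(\bs(g_1))) , \]
which is composable because $\bt(r^1(\bs(g_1))) = \bs(g_1)$, and smooth because $\sigma$, $p$, $\mu$, and $\bs$ are all smooth. The relation $d^n_0 \circ r^n = \Id$ is built into the formula, since $d^n_0$ drops the rightmost entry of the tuple.

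The remaining task is to verify the face-shift identity $d^{n+1}_i \circ r^{n+1} = r^n \circ d^n_{i-1}$ for $1 \le i \le n+1$. For $i \ge 2$, both sides only modify entries to the left of the appended arrow $r^1(\bs(g_1))$, and $\bs(g_1)$ is unaffected by $d^n_{i-1}$, so the appended entry agrees on both sides and the identity holds on the nose. The case $i = 1$ reduces to the single identity
\[ g_1 \cdot r^1(\bs(g_1)) = r^1(\bt(g_1)) , \]
where both sides are arrows with source $\sigma(p(\bt(g_1)))$ (using the orbit equality $p(\bs(g_1)) = p(\bt(g_1))$) and target $\bt(g_1)$; they must therefore coincide by the principality hypothesis. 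The only step that genuinely uses the principal hypothesis beyond bookkeeping is the identification of $\mu$ as a smooth map, and I expect this — along with making the $n = 1$ compatibility precise — to be the main (and essentially only) technical point; the rest of the proof is a straightforward combinatorial unwinding of the simplicial axioms.
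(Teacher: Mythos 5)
Your proposal is correct and is essentially the paper's own proof: your appended arrow $r^1(\bs(g_1))$ coincides (by trivial isotropy) with the paper's $(g_{n-1}\cdots g_1)^{-1}\cdot r^1(\bt(g_{n-1}))$, so the retractions are literally the same maps, and both verifications hinge on the same identity $g\cdot r^1(\bs(g))=r^1(\bt(g))$. Your justification of smoothness via the diffeomorphism $(\bt,\bs)\colon \CG_1 \to \CG_0 \fiber{p}{p} \CG_0$ is if anything slightly more explicit than the paper's.
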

    \begin{proof}
    The proof is by a direct construction. Let us define a family of maps as follows:
\[ r^0 := \sigma \colon B \to \CG_0 \  \]
\[ r^1 \colon \CG_0 \to \CG_1  \] 
defined uniquely by the equations:
    \[ \bt \circ r^1 = \Id , \qquad \bs \circ r^1 = r^0 \circ p. \]
For \( n \ge 2\) we take:
    \[ r^{n}(g_{n-1},\ldots, g_1) := \left(g_{n-1} , \ldots , g_1 , (g_{n-1} \cdots g_1)^{-1} \cdot (r^1 \circ t)(g_{n-1}))\right). \]
    We claim that this data constitutes a left retract of \( \CG \) to \( B \) along \( p \).

    First let us explain why \( r^1 \) is well defined. Since the isotropy groups of \( \CG \) are trivial, an arrow in \( \CG_1 \) is completely determined by its source and target. Furthermore, since \( r^0 \) is a section of \( p \), for any point \( x \in \CG_0 \) there is a unique arrow in \( \CG \) with source \( r^0 \circ p(x) \) and target \( x \). And the smoothness of the map $r^0$ follows from the uniqueness of its choice. 

    It is not difficult to verify that \( d^n_0 \circ r^n = \Id \). For the case \( n = 1 \) we have this by construction of \( r^1 \). To show that this is a left retract we must show that for all \( 1 \le i \le n \) we have:
    \[ d^n_i \circ r^n  = r^{n-1} \circ d^n_{i-1}. \]
    
    The above equation is fairly straightforward to verify for \( i < n\). To show the case \( i = n \), it is helpful to first point out that the map \( r^1 \) has the following property for all \( x \in \CG_0 \) and \( h \in \CG \) with \( \bs(h) = x \):
    \[ h \cdot r^1(x) = r^1(t(h)). \]
    With this fact in mind, we get the following computation for all \( (g_n,\ldots , g_1) \in \CG\nerv{n-1} \):
    \begin{align*}
        d^{n}_{n} \circ r^n (g_{n-1},\ldots , g_1) 
        = d^n_n&\left(g_{n-1},\ldots , g_1, (g_{n-1} \cdots g_1)^{-1} \cdot r^1(\bt(g_{n-1}))\right) \\
        = &\left(g_{n-2}, \ldots , g_1, (g_{n-1} \cdots g_1)^{-1} \cdot r^1(\bt(g_{n-1})) \right)  \\
        = &\left(g_{n-2}, \ldots , g_1, (g_{n-2} \cdots g_1)^{-1} \cdot  g_{n}^{-1} \cdot r^1(\bt(g_n))\right) \\
        = &\left(g_{n-2}, \ldots , g_1, (g_{n-2} \cdots g_1)^{-1}  \cdot r^1(\bt(g_{n-2}))\right)\\
        = &r^{n-1} \circ d^{n-1}_{n-1} (g_{n-1}, \ldots , g_1)  .
    \end{align*} 
\end{proof}

\begin{example}\label{example:standard.left.retract}
    Suppose \( \CG \) is a Lie groupoid and let \(  k \ge 1 \) be a natural number. Consider the right shift \( \CG_R[k] \). Associated to this right shift, there is a natural projection map \( d^k_k \colon \CG\nerv{k} \to \CG\nerv{k-1} \) which can easily be seen to be invariant with respect to the source and target of \( \CG_R[k] \). 
    
    Furthermore, there is a natural left retract of \( \CG_R[k] \) to \( \CG\nerv{k-1} \) along \( d^k_k\) given by:
    \[   \{ s^{k+n-1}_k \colon \CG\nerv{k+n-1} \to \CG\nerv{k+n} \}_{n \in \mathbb{N}} \]
    That this data constitutes a left retract follows directly from the axioms regarding how face maps and degeneracy maps interact in a simplicial set. We call this the \emph{standard retract} of \( \CG_R[k] \) to \( \CG\nerv{k-1} \).
\end{example}

\subsection{Left retracts of cosimplicial vector spaces}
Let us now take a look at the left retracts in the context of cosimplicial vector spaces.
\begin{definition}
    Let \( V \) be a cosimplicial vector space and suppose we are given an augmentation \( L \colon W \to V\nerv{0} \). A \emph{left retract} \(  \{\theta^n\}_{n \in \mathbb{N}} \) of \( V \) along \( L \) consists of a family of linear maps \(\{ \theta^n \}_{n \in \mathbb{N}} \) where: 
    \[ \theta^0 \colon V\nerv{0} \to W \]
    and for \( n \ge 1 \) we have:
\[   \theta^n  \colon V\nerv{n} \to V\nerv{n-1} \]
and which satisfies the following equations for all \( n \ge 0\):
\begin{itemize}
    \item \( \theta \) intertwines \( \phi^1_1 \) and \( L \): 
    \[ \theta^1 \phi^1_1 = L \theta^0 ,  \]
    \item for all \( n \ge 1 \), \( \phi^n_0 \) is a left inverse of \( \theta^{n}\):
    \[ \theta^{n} \phi^{n}_0 = \Id ,\]
    \item \( \theta \) ``shifts'' the face maps to the left. For all \( 1 \le i \le n + 1\):
    \[ \theta^{n+1} \phi^{n+1}_i = \phi^{n}_{i-1} \theta^n . \]
\end{itemize}
\end{definition}
\begin{example}
    The notion of a left retract for cosimplicial vector spaces is the dual to the notion of a left retract for Lie groupoids. Given an augmentation of a Lie groupoid \( p \colon \CG\nerv{0} \to Y \) and a left retraction \( \{ r_n \}_{n \in \mathbb{N}} \) along \( p \) then \( \{ r_n^* \}_{n \in \mathbb{N} }\) constitutes a left retraction of \( \Ch(\CG) \) along \( p^* \colon C^\infty(Y) \to C^\infty(\CG\nerv{0} )  \).
\end{example}
There is also a version of the standard left retract for shifted simplicial structures for cosimplicial vector spaces.
\begin{example}\label{example:example.standard.left.retract.vectorspace}
    Let \(V \) be a cosimplicial vector space and \( k \ge 1 \) be a natural number. Consider the right shift \(  V_R[k] \). The linear map \( \phi^k_k \colon V\nerv{k-1} \to V\nerv{k} \) is invariant for \( V_R[k] \). We can construct a left retract of \( V_R[k] \) to \( V\nerv{k-1} \) along \( \phi^k_k \) by taking the following maps:
    \[ \{ \sigma^{k+n-1}_k \colon  V\nerv{k+n} \to V\nerv{k+n-1} \}_{n \in \mathbb{N}} \]
    This construction is just the cosimplicial vector space analogue of Example~\ref{example:standard.left.retract}.
\end{example}
\subsection{Retractions and cohomology}
Augmentations and left retracts of cosimplicial vector spaces have cohomological implications as well. Left retractions are useful because they can be used to prove that augmentations are acyclic.
\begin{lemma}\label{lemma:trivial.simplicial.cohomology}
Suppose \( V \) is a cosimplicial vector space and \( L \colon W \to V \) is an augmentation. If there exists a left retract \( \{ \theta_n \} \) of \( W \) along \( L \) then \( L \) is acyclic.

In particular, for all \( n \ge 1 \) we have that:
\[ H^n(V) = 0 \]
and \( L_* \colon W \to H^0(W) \) is an isomorphism.
\end{lemma}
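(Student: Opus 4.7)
The approach is to use the retract maps $\{\theta^n\}$ themselves as a contracting homotopy for the augmented cochain complex
\[
\begin{tikzcd}
W \arrow[r, "L"] & V\nerv{0} \arrow[r, "\delta^0"] & V\nerv{1} \arrow[r, "\delta^1"] & V\nerv{2} \arrow[r] & \cdots .
\end{tikzcd}
\]
Since $L$ being acyclic is defined as vanishing of cohomology of this augmented complex, producing a chain contraction immediately yields both the acyclicity claim and its unpacked form: $H^n(V) = 0$ for all $n \geq 1$ and $L_\ast \colon W \xrightarrow{\cong} H^0(V)$.

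Setting $\delta^{-1} := L$ for bookkeeping, the main identity to verify is
\[ \theta^{n+1}\delta^n + \delta^{n-1}\theta^n = \Id_{V\nerv{n}} \qquad \text{for all } n \geq 0, \]
together with $\theta^0 L = \Id_W$ at the augmentation level (the natural section condition dual to $r^0$ being a section of $p$ in the simplicial retract definition). The proof of the displayed identity is a direct expansion: write
\[ \theta^{n+1}\delta^n = \sum_{i=0}^{n+1}(-1)^i\,\theta^{n+1}\phi^{n+1}_i, \]
split off the $i=0$ term, which collapses to $\Id_{V\nerv{n}}$ by the left-inverse axiom $\theta^{n+1}\phi^{n+1}_0 = \Id$, and rewrite each remaining summand via the shift axiom $\theta^{n+1}\phi^{n+1}_i = \phi^n_{i-1}\theta^n$. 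Reindexing $j = i-1$ identifies the leftover sum with $-\delta^{n-1}\theta^n$, yielding the formula. The $n=0$ case is the same computation, except that the intertwining axiom $\theta^1\phi^1_1 = L\theta^0$ is invoked in place of a pure shift, producing the required $L\theta^0$ summand.

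Once the homotopy identity is in hand, the cohomological consequences follow formally. Any cocycle $v \in \ker\delta^n$ with $n \geq 1$ equals the coboundary $\delta^{n-1}(\theta^n v)$, so $H^n(V) = 0$. Any cocycle $v \in \ker\delta^0$ satisfies $v = L(\theta^0 v)$, showing $L_\ast$ is surjective onto $H^0(V)$, and $\theta^0 L = \Id_W$ gives its injectivity. The only delicate step is the sign-and-index bookkeeping in the central expansion, which is the direct dual of the computation used in the proof of $\delta^2 = 0$ (Lemma \ref{lemma:cosimplicial.differential.squares.to.zero}), so I do not anticipate any genuine obstacle.
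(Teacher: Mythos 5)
Your proposal is correct and follows essentially the same route as the paper: both exhibit the maps \( \theta^n \) as a contracting homotopy of the augmented complex and then read off the cohomological consequences formally. In fact your expansion supplies the computation the paper leaves as ``a direct calculation,'' and your level-zero identity \( \theta^1 \delta^0 + L\theta^0 = \Id \) carries the correct sign (the paper's displayed version \( \theta^1\delta^0 - L\theta^0 = \Id \) has a sign slip).
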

\begin{proof}
Recall that an acyclic augmentation is one for which the augmented complex:
\[
\begin{tikzcd}
    W \arrow[r,"L" ] & V\nerv{0} \arrow[r] & V\nerv{1} \arrow[r] & V\nerv{2} \arrow[r] & \cdots 
\end{tikzcd}
\]
has trivial cohomology groups in all degrees.

We will do this by showing that the \( \theta\)'s constitute a contracting homotopy of the augmented complex.

Using the definition of a left retract, a direct calculation (just a straightforward application of the definition) shows that we have the following three equations:

\begin{itemize}
    \item For \( n \ge 1 \) we have:
        \[ \theta^{n+1} \delta^n + \delta^{n-1} \theta^{n} = \Id ,\]
    \item At level \( n = 0 \) we have:
     \[ \theta^1 \delta^0 - L \theta^0 = \Id ,\]
     \item At level \( n = -1 \) we have:
      \[ \theta^0 L  = \Id .\]
\end{itemize}
These three equations immediately imply that all cohomology groups of the augmented complex vanish.
\end{proof}
\begin{corollary}
    Let \( \CG \) be a principal Lie groupoid and \( p \colon \CG_0 \to B \) be the projection to the orbit space. If \( p \) admits a global section, then the groupoid cohomology of \( \CG \) is concentrated in degree zero and \( p^* \colon C^\infty(B) \to  H^0(\CG)  \) is an isomorphism.
\end{corollary}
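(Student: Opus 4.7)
The plan is to simply concatenate three preceding results: the explicit left retract for a principal groupoid from Lemma~\ref{example:principal.groupoid.retraction}, the example noting that pulling back a simplicial left retract produces a cosimplicial left retract, and the cohomological consequence of a cosimplicial left retract given by Lemma~\ref{lemma:trivial.simplicial.cohomology}.

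First I would verify that $p$ is genuinely an augmentation of the nerve of $\CG$, which reduces to checking that $p \circ \bs = p \circ \bt$; this holds because for any arrow $g \in \CG_1$ the source and target lie in the same orbit. Since $\CG$ is principal and $\sigma \colon B \to \CG_0$ is the given global section of $p$, Lemma~\ref{example:principal.groupoid.retraction} then supplies a smooth left retract $\{r^n\}_{n \in \NN}$ of $\CG$ along $p$, using the explicit formula from the proof of that lemma (where the triviality of the isotropy groups is precisely what makes the auxiliary map $r^1$ well defined).

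Next I would dualize by setting $\theta^n := (r^n)^*$. By the example explicitly pointing out that pulling back a simplicial left retract produces a cosimplicial left retract, $\{\theta^n\}_{n \in \NN}$ is then a left retract of the cosimplicial algebra $\Ch(\CG) = C^\infty(\CG\nerv{\bullet})$ along the augmentation $p^* \colon C^\infty(B) \to C^\infty(\CG_0)$. Applying Lemma~\ref{lemma:trivial.simplicial.cohomology} at once shows that $p^*$ is an acyclic augmentation, which immediately gives $H^n(\CG) = 0$ for all $n \ge 1$ and that $p^*$ induces an isomorphism $C^\infty(B) \to H^0(\CG)$. There is no substantive obstacle here; the corollary is entirely packaged by the earlier lemmas, and the only step requiring any attention is the bookkeeping check that $p$ (and hence $p^*$) satisfies the augmentation axiom.
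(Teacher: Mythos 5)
Your proposal is correct and follows exactly the route the paper intends: the corollary is stated immediately after Lemma~\ref{lemma:trivial.simplicial.cohomology} precisely so that it follows by combining the explicit retract of Lemma~\ref{example:principal.groupoid.retraction} with the dualization example and that lemma, which is what you do. The only point needing care, as you note, is the bookkeeping that $p$ is an augmentation (i.e.\ $p \circ \bs = p \circ \bt$), and your justification via orbits is the right one.
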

Actually, the above corollary is true even if a global section does not exist. The idea is that, even when a global section does not exist at the level of spaces, one can still use local sections together with a partition of unity to construct a retraction at the level of groupoid cochains. We will see this argument made more explicitly later (See Section~\ref{subsection:morita.invariance.groupoid.cohomology}).
\begin{corollary}
    Let \(V \) be a cosimplicial vector space and let \( V_R[k] \) be a right shift. Then the map \( \phi^k_k \colon V\nerv{k-1} \to V\nerv{k} \) constitutes an acyclic augmentation of \( V_R[k] \).
    
    In particular, the cohomology of \( V_R[k] \) is concentrated in degree zero and the map \( \phi^k_k \colon V\nerv{k-1} \to  H^0(V_{R}[k]) \) is an isomorphism.
\end{corollary}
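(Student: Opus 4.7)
The plan is to observe that this corollary is essentially an immediate application of Lemma~\ref{lemma:trivial.simplicial.cohomology} to the left retract that was already constructed in Example~\ref{example:example.standard.left.retract.vectorspace}. There are really only two small things to verify: that \( \phi^k_k \) is genuinely an augmentation of the cosimplicial vector space \( V_R[k] \), and that the standard retract supplies the hypothesis needed to invoke the lemma.

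First I would verify the augmentation condition. Unpacking the definition of the right shift, the first two coface maps of \( V_R[k] \) at level \( n = 1 \) are
\[ (\phi_R[k])^1_0 = \phi^{k+1}_{k}, \qquad (\phi_R[k])^1_1 = \phi^{k+1}_{k+1}. \]
Thus the augmentation condition \( (\phi_R[k])^1_0 \circ \phi^k_k = (\phi_R[k])^1_1 \circ \phi^k_k \) amounts to
\[ \phi^{k+1}_{k} \circ \phi^k_k = \phi^{k+1}_{k+1} \circ \phi^k_k, \]
which is immediate from axiom (CS1) applied with \( i = k \), \( j = k+1 \), and \( n = k \).

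Next, Example~\ref{example:example.standard.left.retract.vectorspace} exhibits the family \( \{ \sigma^{k+n-1}_k \colon V\nerv{k+n} \to V\nerv{k+n-1} \}_{n \in \NN} \) as a left retract of \( V_R[k] \) along \( \phi^k_k \); the verification uses axioms (CS2)--(CS4) relating codegeneracies and cofaces, and does not require any new input. Finally, applying Lemma~\ref{lemma:trivial.simplicial.cohomology} to this augmentation together with its left retract yields that \( \phi^k_k \) is acyclic, whence \( H^n(V_R[k]) = 0 \) for all \( n \ge 1 \) and \( \phi^k_k \) descends to an isomorphism \( V\nerv{k-1} \cong H^0(V_R[k]) \).

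Honestly, there is no real obstacle here; all the substantive content was established earlier. The only subtlety is bookkeeping: one must consistently track the index shift when translating between the unshifted cosimplicial identities on \( V \) and the shifted coface/codegeneracy maps on \( V_R[k] \). Once that bookkeeping is correct, the statement is a short chain of citations.
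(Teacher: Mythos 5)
Your proposal is correct and follows exactly the route the paper intends: the corollary is an immediate consequence of the standard left retract of \( V_R[k] \) along \( \phi^k_k \) from Example~\ref{example:example.standard.left.retract.vectorspace} together with Lemma~\ref{lemma:trivial.simplicial.cohomology}. Your explicit check of the augmentation condition via (CS1) with \( i=k \), \( j=k+1 \) is a welcome detail that the paper leaves implicit.
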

\subsection{Cohomology of shift double structures}
We can now compute the cohomology of shift double structures. The last observation of the previous subsection was that the cohomology of a shift of a cosimplicial vector space is concentrated in degree zero.

More specifically, every shift comes with a natural acyclic augmentation. We can put all of these augmentations together to construct acyclic augmentations of the full double structure.

If \( V \) is a cosimplicial vector space. There are two natural inclusions of the cochain complex of \( V \) into its shift double complex.
\begin{definition}
    Let \( V \) be a cosimplicial vector space. The \emph{standard left augmentation}, denoted 
    \[ L_V \colon \Ch(V) \to \Ch(V)_{\shift} \] 
 is given by the face maps:
    \[ \left\{ \phi^{n+1}_{0} \colon V\nerv{n} \to V\nerv{n+1} = V_{\shift}\nerv{n,0} \right\}_{n \in \mathbb{N}}.\]
    Similarly, the \emph{standard bottom augmentation}, denoted 
    \[ B_V \colon \Ch(V) \to \Ch(V)_{\shift} \]
    is given by the face maps:
    \[  \{ \phi^{n+1}_{n+1} : V\nerv{n} \to V\nerv{n+1} = V_{\shift}\nerv{0,n} \}_{n \in \mathbb{N}}.  \]
    These two maps are illustrated below
    \[
\begin{tikzcd}[column sep=large, row sep = large]
\vdots & \vdots &  \vdots &  \vdots &  \iddots \\
V\nerv{2} \arrow[u, "\delta"] \arrow[r, dashed, hook, "\phi^3_0"] & V^{(3)}  \arrow[r , "{\delta_L[3]}"] \arrow[u, "{\delta_R [1]}"] & V^{(4)}  \arrow[r , "{\delta_L[3]}"] \arrow[u, "{\delta_R [2]}"] & V^{(5)}  \arrow[r , "{\delta_L[3]}"] \arrow[u, "{\delta_R [3]}"] & \cdots \\
V\nerv{1} \arrow[u, "\delta"] \arrow[r, dashed, hook, "\phi^2_0"] & V^{(2)}  \arrow[r , "{\delta_L[2]}"] \arrow[u, "{\delta_R [1]}"] & V^{(3)}  \arrow[r , "{\delta_L[2]}"] \arrow[u, "{\delta_R [2]}"] & V^{(4)}  \arrow[r , "{\delta_L[2]}"] \arrow[u, "{\delta_R [3]}"] & \cdots \\
V\nerv{0} \arrow[u, "\delta"] \arrow[r, dashed, hook, "\phi^1_0"] & V^{(1)}  \arrow[r , "{\delta_L[1]}"] \arrow[u, "{\delta_R [1]}"] & V^{(2)}  \arrow[r , "{\delta_L[1]}"] \arrow[u, "{\delta_R [2]}"] & V^{(3)}  \arrow[r , "{\delta_L[1]}"] \arrow[u, "{\delta_R [3]}"] & \cdots \\
& V\nerv{0} \arrow[u, dashed, hook, "\phi^1_1"] \arrow[r, "\delta"] & V\nerv{1} \arrow[u, dashed, hook, "\phi^2_2"] \arrow[r, "\delta"] & V\nerv{2} \arrow[u, dashed, hook, "\phi^3_3"] \arrow[r,"\delta"] & \cdots .
\end{tikzcd}
    \]
\end{definition}
It turns out that these left and bottom augmentations are actually quasi-isomorphisms. This means that the shift double structure can be thought of as an ``exploded'' model for the original object.

Let us state our claim more clearly:
\begin{lemma}\label{lemma:unit.is.retract.of.shift}
The bottom augmentation
\[B_V \colon \Ch(V) \to \Ch(V)_{\shift} \] 
and left augmentation
\[L_V \colon \Ch(V) \to \Ch(V)_{\shift}  \]
are quasi-isomorphisms.
\end{lemma}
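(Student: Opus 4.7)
The strategy is to invoke the Augmentation Lemma (Lemma~\ref{lemma:augmentation.lemma}): it suffices to show that each row of $\Ch(V)_{\shift}$ becomes exact when augmented by $L_V$, and each column becomes exact when augmented by $B_V$. First, I would verify that $L_V$ and $B_V$ are genuine augmentations (i.e. chain maps with $d_H L_V = 0$ and $d_V B_V = 0$); these reduce to the cosimplicial identity (CS1).

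The column case is immediate from the material already developed. By definition, the $m$-th column of $\Ch(V)_{\shift}$ is the cochain complex of the right shift $V_R[m+1]$, and the restriction of $B_V$ to this column is the map $\phi^{m+1}_{m+1}\colon V\nerv{m} \to V\nerv{m+1}$. Example~\ref{example:example.standard.left.retract.vectorspace} supplies an explicit left retract of $V_R[m+1]$ along $\phi^{m+1}_{m+1}$ built from codegeneracies, and Lemma~\ref{lemma:trivial.simplicial.cohomology} then yields the desired column-wise acyclicity.

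The row case requires more care because the paper's notion of left retract is tailored to right shifts. The $n$-th augmented row is
\[ V\nerv{n} \xrightarrow{\phi^{n+1}_0} V\nerv{n+1} \xrightarrow{\delta_L[n+1]^0} V\nerv{n+2} \xrightarrow{\delta_L[n+1]^1} \cdots , \]
which is the cochain complex of $V_L[n+1]$ augmented by $\phi^{n+1}_0$. Rather than shoehorn this into the left-retract formalism, I would exhibit a direct contracting homotopy
\[ h^m := (-1)^m \sigma^{n+m}_m \colon V\nerv{n+m+1} \to V\nerv{n+m} \qquad (m \ge 0), \]
with $h^{-1} := 0$. One then checks $h^{m+1} d^m + d^{m-1} h^m = \Id$ at each level using only (CS2) and (CS3). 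At level $-1$ it reduces to $\sigma^n_0 \phi^{n+1}_0 = \Id$, which is (CS3). For $m \ge 0$, expand $d^m = \sum_{i=0}^{m+1}(-1)^i \phi^{n+m+2}_i$ and apply (CS2) to rewrite $\sigma^{n+m+1}_{m+1}\phi^{n+m+2}_i = \phi^{n+m+1}_i\sigma^{n+m}_m$ for $i \le m$, together with (CS3) for $i = m+1$; the resulting terms telescope against $d^{m-1} h^m$, leaving exactly the identity.

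The main obstacle is purely combinatorial: choosing the correct signs and indices in the row-wise homotopy. The naive guess $h^m = \sigma^{n+m}_0$, obtained by mirroring the column retract, fails because the augmentation $\phi^{n+1}_0$ sits on the opposite end of the simplex from $\phi^{n+1}_{n+1}$, so the cross terms do not cancel correctly. Once the correct formula is fixed, no deeper device is needed; the result is essentially the classical fact that the décalage of a cosimplicial object is contractible onto its augmenting level.
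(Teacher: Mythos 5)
Your proof is correct and follows the same route as the paper: reduce to the Augmentation Lemma (Lemma~\ref{lemma:augmentation.lemma}) and establish acyclicity of the augmented columns via the standard retract of $V_R[m+1]$ from Example~\ref{example:example.standard.left.retract.vectorspace} together with Lemma~\ref{lemma:trivial.simplicial.cohomology}. The one place you go beyond the paper is the row case: the paper disposes of $L_V$ by appealing to ``right retractions'' that it declines to define, whereas you write down the explicit contracting homotopy $h^m = (-1)^m\sigma^{n+m}_m$ for the augmented row $V\nerv{n} \xrightarrow{\phi^{n+1}_0} V\nerv{n+1} \to \cdots$ and verify it from (CS2)--(CS3); your computation checks out, and your observation that the naive mirror $\sigma^{n+m}_0$ fails (it produces a leftover term $\phi_0\sigma_0$ rather than telescoping) is accurate and is precisely the indexing subtlety the paper sweeps under ``adjusting the indexing.''
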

\begin{proof}
    We prove this using the augmentation lemma. We saw earlier in Example~\ref{example:example.standard.left.retract.vectorspace} that one can construct a left retraction of the augmented complexes associated to each column. We also learned that this implies that the augmented rows are acyclic. By the Augmentation Lemma (Lemma~\ref{lemma:augmentation.lemma}), it follows that \( B_V \) is a quasi-isomorphism.

    A symmetrical argument can be applied to \( L_V \). The only difference is that, technically, one must use ``right retractions'' rather than left which we have not formally defined or constructed. However, this is only a matter of adjusting the indexing in our definitions.
\end{proof}

\section{Target retractions over open sets}\label{sec:shifted retractions of groupoids}
Briefly, a target retraction is a special kind of left retraction that takes place over an open subset \( U \subset \CG\nerv{0} \) of the objects of a Lie groupoid.

Notice that gave explicit formulas for constructing left retractions of the right shifts of a Lie groupoid (See Example~\ref{example:standard.left.retract}) but, in general, there is no standard left retract for a left shift.

It turns out that it is still possible to construct left retracts of left shifts by choosing a section of the target map. Some complications in the following definitions come from the fact that it will be useful for us to be able to do this for an \emph{arbitrary local section} of the target map defined over open subsets of the objects. Retractions constructed in this way we will call ``target families.''

\subsection{Restricting left shifts to open sets}
Note that the left shift \( \CG_L[1]\) of a Lie groupoid is actually a principal groupoid with orbit space \( \CG\nerv{0} \). The projection to the orbit space is given by the target map: \( t \colon  \CG\nerv{1} \to \CG\nerv{0} \).

Given an open subset \( U \subset \CG\nerv{0} \) it makes sense to restrict the left shift to the open set \( U \) by restricting to all arrows that lie ``above'' \( U \).

Towards that end, we have the following definition:
\begin{definition}
    Let \( \CG \) be a Lie groupoid throughout this section. Given an open set \( U \subset \CG_0 \) we write \( U\nerv{0} := U \) and for \( n \ge 1 \):
\[ U\nerv{n} := \{ (g_n, \ldots , g_1) \ : \ t(g_n) \in U \} \subset \CG\nerv{n} \]
\end{definition}
\begin{remark} Warning: The nerve notation is suggestive here but the collection \( \{ U\nerv{n} \} \) does not quite constitute a simplicial submanifold of the nerve of \(\CG \). The reason for this is that not all of the face maps are defined for \( U\nerv{n} \). In general, the image of \(d^n_n |_{U\nerv{n}} \) is not contained in \( U\nerv{n-1} \).
\end{remark}
Although this collection does not constitute a simplicial submanifold of \( \CG \). We can use this family to construct subgroupoids of the shifts of \( \CG \).
\begin{definition}
Let:
\[ U[k]\nerv{n} := \left\{ U\nerv{n+k} \right\}_{n \in \mathbb{N}}. \]
We equip \( U[k]\nerv{\bullet}\) with the structure of a simplicial manifold by letting it inherit its face maps from \( \CG[k]\).
In other words we take:
\[ d^n_i |_{U\nerv{n}} \colon U\nerv{n} \to U\nerv{n-1} . \]
\end{definition}
This is well defined because the only face map that does not restrict to the family \( d^n_i \colon U\nerv{n} \to U\nerv{n-1} \) is the ``top'' face map where \( i = n \). Since any left shift of \( \CG \) omits this face map, one obtains a well defined simplicial manifold structure\footnote{Actually the simplicial manifold \( U[k] \) is the nerve of a Lie groupoid. It is the nerve of the (right) action groupoid of \( \CG \) on \( U\nerv{k} \). This groupoid is principal and has orbit space \( U\nerv{k-1} \).} on \( U[k] \).

If \( \{ U_a \}_{a \in A } \) is an open cover of \( \CG_0 \) then for each \( k \ge 1 \) this construction provides us with an ``simplicial'' open cover \( \{ U_a[k] \}_{a \in A} \) of \( \CG_L[k] \).
\subsection{Target families}
Roughly speaking, a target family is a special kind of left retract of the local left shift \( U[1] \) where \( U \) is an open subset of \( \CG \).

\begin{definition}
    Suppose \( \CG \) is a Lie groupoid and we are given an open set \( U \subset \CG_0 \) together with a section \( r^0 \colon U \to U\nerv{1}\) of the target map \( \bt \). 
    
    The associated \emph{target family over \( U \)} is the collection of maps:
    \[ \left\{ r^n \colon U\nerv{n} \to U\nerv{n+1} \right\}_{n \in \mathbb{N}} \]
    where for all \( n \ge 1 \):
    \[ r^n (g_{n} , \ldots , g_1 ) \mapsto \left(g_n,\ldots, g_1, \, (g_{n} \cdots g_1)^{-1} \cdot r^0(t(g_{n}))\right)  . \]   
\end{definition}
\begin{example}
    Consider the identity section \( u \colon \CG_0 \to \CG_1 \). This section has an associated target family where \( r^0 = u \) and for \( n \ge 1 \) we have:
    \[ r^n (g_n,\ldots,g_1) = (g_n,\ldots , g_1, (g_n \cdots  g_1)^{-1} ). \]
\end{example}
\begin{lemma}
    Let \( \CG \) be a Lie groupoid and let \( \{ r^n \colon U\nerv{n} \to \CG\nerv{n+1} \}_{n \in \mathbb{N}} \) be a target family. Then we have that for all \( n \ge 1\):
    \[ d^{n+1}_i \circ r^n = \begin{cases} \Id,& i = 0 ,\\ r^{n-1} d_{i-1}^n ,& 1 \le i \le n .\end{cases} \]
\end{lemma}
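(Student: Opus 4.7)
My proof plan is a direct computation by case analysis on the index $i$, using the explicit formula
\[ r^n(g_n, \ldots, g_1) = \bigl(g_n, \ldots, g_1, \, (g_n \cdots g_1)^{-1} \cdot r^0(t(g_n))\bigr) . \]
Before starting the case analysis, I would briefly verify that the ``appended'' arrow $h := (g_n \cdots g_1)^{-1} \cdot r^0(t(g_n))$ is indeed composable with $g_1$ (i.e. that $t(h) = s(g_1)$), so that $r^n$ genuinely lands in $U\nerv{n+1} \subset \CG\nerv{n+1}$. This follows immediately because $r^0$ is a section of $\bt$, so $t(r^0(t(g_n))) = t(g_n)$, and hence $t(h)$ equals the source of $g_n \cdots g_1$, which is $s(g_1)$.

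For $i = 0$, the face map $d^{n+1}_0$ simply drops the right-most entry of its argument. Since $r^n$ added the entry $h$ on the right, the identity $d^{n+1}_0 \circ r^n = \Id$ follows immediately.

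For $1 \le i \le n$, I split into the subcase $i = 1$ and $2 \le i \le n$. When $2 \le i \le n$, applying $d^{n+1}_i$ multiplies $g_i$ and $g_{i-1}$ in the interior of the tuple and leaves the last entry $h$ untouched, yielding
\[ \bigl(g_n, \ldots, g_{i+1}, g_i g_{i-1}, g_{i-2}, \ldots, g_1, h\bigr). \]
On the other hand, $d^n_{i-1}(g_n,\ldots,g_1) = (g_n, \ldots, g_{i+1}, g_i g_{i-1}, g_{i-2},\ldots,g_1)$, and applying $r^{n-1}$ appends the arrow $(g_n \cdots g_{i+1} \cdot (g_i g_{i-1}) \cdot g_{i-2} \cdots g_1)^{-1} \cdot r^0(t(g_n))$. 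The product telescopes to $g_n \cdots g_1$, so the appended arrow is precisely $h$, and the two compositions agree. The subcase $i = 1$ is the only one where the last entry is genuinely modified: $d^{n+1}_1$ replaces the pair $(g_1, h)$ by the single arrow $g_1 \cdot h = (g_n \cdots g_2)^{-1} \cdot r^0(t(g_n))$, which is exactly the last entry produced by $r^{n-1}(g_n, \ldots, g_2) = r^{n-1} \circ d^n_0(g_n, \ldots, g_1)$.

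There is no serious obstacle here; the lemma is pure bookkeeping in the spirit of checking the face-degeneracy relations. The only point requiring mild care is the cancellation computation in the $i = 1$ case and the telescoping product in the $2 \le i \le n$ case, both of which rely on the associativity of groupoid multiplication and on $r^0$ being a section of $\bt$ rather than any more subtle property. Notice that the formula is deliberately stated only for $0 \le i \le n$: the ``top'' face map $d^{n+1}_{n+1}$ is the one left out, which is consistent with the fact that $U\nerv{n}$ is not preserved by $d^n_n$ and with the philosophy that target families provide left retractions of left shifts.
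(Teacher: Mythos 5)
Your proposal is correct and follows exactly the route the paper intends: the paper explicitly declines to write out the proof, remarking only that ``the statement of the lemma is essentially the proof as it follows from a rather straightforward computation,'' and your case analysis (dropping the appended arrow for $i=0$, the telescoping product for $2 \le i \le n$, and the cancellation $g_1 \cdot (g_n \cdots g_1)^{-1} = (g_n \cdots g_2)^{-1}$ for $i=1$) is precisely that computation, matching the analogous verification the paper does carry out in Lemma~\ref{example:principal.groupoid.retraction}. Your preliminary check that the appended arrow is composable, using only that $r^0$ is a section of $\bt$, is a worthwhile addition.
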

The statement of the lemma is essentially the proof as it follows from a rather straightforward computation. It tells us that any target family defines a left retract of the associated subgroupoids of the left shifts of \( \CG \). 
\begin{lemma}
    Suppose we have a target family \( \{ r^n \} \) over \( U \). Then the  collection \( \{ r^{k-1+n} \}_{n \in \mathbb{N}} \) constitutes a left retract of the groupoid \( U[k] \) along the map \( d^k_0 \colon U[k]\nerv{0} \to U\nerv{n-k}\).
\end{lemma}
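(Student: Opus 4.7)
The plan is to verify directly that $\{r^{k-1+n}\}_{n \in \mathbb{N}}$ satisfies the three defining properties of a left retract of the simplicial manifold $U[k]$ along the augmentation $d^k_0 \colon U[k]\nerv{0} \to U\nerv{k-1}$. Because $U[k]\nerv{n} = U\nerv{n+k}$ and its face maps are inherited from the left shift $\CG_L[k]$, each required identity becomes a re-indexed instance of the formula established in the preceding lemma.

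First I would check that $d^k_0$ really is a well-defined augmentation of $U[k]$, i.e.\ that $d^k_0 \circ d^{k+1}_0 = d^k_0 \circ d^{k+1}_1$; this follows from the simplicial identities in $\CG$ together with the observation that both sides send $(g_{k+1},\ldots,g_1)$ to $(g_{k+1},\ldots,g_3)$. I would also note that $r^{k-1}$ has the correct domain and codomain for the ``$r^0$'' component of the retract, since it sends $U\nerv{k-1}$ into $U\nerv{k} = U[k]\nerv{0}$.

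Next I would dispatch the three retract axioms by specializing the preceding lemma $d^{m+1}_0 \circ r^m = \Id$ and $d^{m+1}_i \circ r^m = r^{m-1} \circ d^m_{i-1}$ (for $1 \le i \le m$). Taking $m = k+n-1$ yields the identity axiom $d^{k+n}_0 \circ r^{k+n-1} = \Id$ for $n \ge 1$. Taking $m = k+n$ yields the shifting axiom $d^{k+n+1}_i \circ r^{k+n} = r^{k+n-1} \circ d^{k+n}_{i-1}$ for $1 \le i \le k+n$; since $k \ge 1$, this comfortably covers the range $1 \le i \le n+1$ required by the left-retract definition. Finally, the case $m = k$, $i = 1$ gives precisely the intertwining identity $d^{k+1}_1 \circ r^k = r^{k-1} \circ d^k_0$ expressing compatibility with the augmentation.

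The main obstacle here is purely bookkeeping: I must be sure that every face map appearing in these identities is one that $U[k]$ actually inherits from $\CG_L[k]$, i.e.\ never the omitted ``top'' face map whose image could escape $U\nerv{\bullet}$. A glance confirms that every $d^{k+n}_i$ appearing above has $i < k+n$, and that the augmentation $d^k_0$ itself sends $U\nerv{k}$ into $U\nerv{k-1}$ because the condition $t(g_k) \in U$ is preserved under dropping $g_1$. Beyond this check and the preceding lemma, no further computation is required.
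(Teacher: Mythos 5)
Your proof is correct and follows exactly the route the paper intends: the paper states this lemma without proof, treating it as an immediate re-indexing consequence of the preceding lemma on target families, and your verification of the three left-retract axioms (together with the checks that $d^k_0$ is a well-defined augmentation and that no omitted top face map is ever invoked) is precisely the bookkeeping the authors leave to the reader. You also correctly read the codomain $U\nerv{n-k}$ in the statement as a typo for $U\nerv{k-1}$.
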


\section{Groupoid homomorphisms}\label{sec:groupoid homomorphisms}
\subsection{Weak equivalences}\label{subset:weak equivalence}
\begin{definition}
    Let \( \CG \) and \( \CH \) be Lie groupoids. A groupoid homomorphism \( F \colon \CH \to \CG \) consists of a pair of smooth functions:
    \[ F_1 \colon \CH_1 \to \CG_1 , \qquad F_0 \colon \CH_0 \to \CG_0 \]
    with the properties that \( F(\bt(h)) = \bt(F_1(h)) \) and \( F_1(\bs(h)) = \bs(F_1(h)) \) for all \( h \in \CH_1  \) and:
    \[ \forall (h_2,h_1) \in \CH\nerv{2}, \qquad F_1(h_2 h_1) = F_1(h_2) F_1(h_1). \]
\end{definition}
\begin{definition}
    Suppose \( \CG \) is a Lie groupoid. A smooth map \( f \colon N \to \CG_0 \) is \emph{smoothly essentially surjective} if the smooth function:
    \[ \CG_1 \fiber{\bs}{f} N \to \CG_0 ,\qquad (g,x) \mapsto \bt(g) \]
    is a surjective submersion. 
    
    Given a function \( f \colon N \to \CG_0 \) which is smoothly essentially surjective the \emph{pullback groupoid} is the Lie groupoid \( f^! \CG \) where:
    \[ (f^! \CG)_0 := N ,\qquad (f^! \CG)_1 := N \fiber{f}{\bt} \CG_1 \fiber{\bs}{f} N . \]
    The source and target maps are:
    \[ \bs(x,g,y) := y ,\qquad \bt( x,g,y) := x ,\]
    and the groupoid multiplication is given by:
    \[ (x,g,y) \cdot (y,h,z) := (x,gh, z). \]
\end{definition}
\begin{definition}\label{defn:weak-equiv}
    A groupoid homomorphism \( F \) is called a \emph{weak equivalence} if \( F_0 \) is smoothly essentially surjective and the natural map:
    \[  F_! \colon \CH \to f^! \CG ,\qquad h \mapsto (\bt(h), F_1(h), \bs(h))  \]
    defines an isomorphism of Lie groupoids.
\end{definition}
\subsection{Double groupoid of a groupoid homomorphism}
\begin{definition}
    Given a groupoid homomorphism \( F \) one can construct a double simplicial structure \( \DG_F \) which we call the \emph{\( F\)-double groupoid}:
    \[
    \DG_F := \left( 
\begin{tikzcd}[column sep =small]
      \vdots \arrow[d, shift left, shift left, shift left] \arrow[d, shift right, shift right, shift right] \arrow[d, shift left] \arrow[d, shift right]  &                                               \vdots \arrow[d, shift left, shift left, shift left] \arrow[d, shift right, shift right, shift right] \arrow[d, shift left] \arrow[d, shift right]  &                                                                                    \vdots \arrow[d, shift left, shift left, shift left] \arrow[d, shift right, shift right, shift right] \arrow[d, shift left] \arrow[d, shift right] & \iddots \\
    \CG\nerv{3} \fiber{\bs}{F_0 \circ \bt} \CH\nerv{0} \arrow[d, shift left, shift left] \arrow[d, shift right, shift right] \arrow[d] & \arrow[l, shift left] \arrow[l, shift right] \CG\nerv{3} \fiber{\bs}{F_0 \circ \bt} \CH\nerv{1} \arrow[d, shift left, shift left] \arrow[d, shift right, shift right] \arrow[d] & \arrow[l, shift left, shift left] \arrow[l, shift right, shift right] \arrow[l] \CG\nerv{3} \fiber{\bs}{F_0 \circ \bt} \CH\nerv{2} \arrow[d, shift left, shift left] \arrow[d, shift right, shift right] \arrow[d] & \arrow[l, shift left, shift left, shift left] \arrow[l, shift right, shift right, shift right] \arrow[l, shift left] \arrow[l, shift right] \cdots  \\
    \CG\nerv{2} \fiber{\bs}{F_0 \circ \bt} \CH\nerv{0}  \arrow[d, shift left] \arrow[d, shift right] & \arrow[l, shift left] \arrow[l, shift right]  \arrow[d, shift left] \arrow[d, shift right] \CG\nerv{2} \fiber{\bs}{F_0 \circ \bt} \CH\nerv{1} &  \arrow[d, shift left] \arrow[d, shift right] \arrow[l, shift left, shift left] \arrow[l, shift right, shift right] \arrow[l] \CG\nerv{2} \fiber{\bs}{F_0 \circ \bt} \CH\nerv{2} & \arrow[l, shift left, shift left, shift left] \arrow[l, shift right, shift right, shift right] \arrow[l, shift left] \arrow[l, shift right] \cdots  \\
    \CG\nerv{1} \fiber{\bs}{F_0 \circ \bt} \CH\nerv{0} & \arrow[l, shift left] \arrow[l, shift right]\CG\nerv{1} \fiber{\bs}{F_0 \circ \bt} \CH\nerv{1}& \arrow[l, shift left, shift left] \arrow[l, shift right, shift right] \arrow[l]\CG\nerv{1} \fiber{\bs}{F_0 \circ \bt} \CH\nerv{2} &\arrow[l, shift left, shift left, shift left] \arrow[l, shift right, shift right, shift right] \arrow[l, shift left] \arrow[l, shift right] \cdots  \\
\end{tikzcd}
\right).
\]
In other words, we have 
\[ \DG_F\nerv{n,m} := \CG\nerv{n+1,m} \fiber{\bs}{F_0 \circ \bt} \CH\nerv{m} .  \]
For \( m \in \mathbb{N} \), each column \( \DG_F\nerv{\bullet,m}\) is the nerve of the (left) action groupoid:
\[ \CG \ltimes (\CG_1 \fiber{\bs}{F_0 \circ \bt} \CH\nerv{m} ) \grpd \CG_1 \fiber{\bs}{F_0 \circ \bt} \CH\nerv{m}  \]
\[ g_2 \cdot (g_1, h_m,\ldots , h_1) := (g_2 g_1, h_m,\ldots, h_1) .  \]
For \( n \in \mathbb{N} \), each column \( \DG_F\nerv{n,\bullet} \) is the nerve of the \emph{right} action groupoid:
\[ (\CG\nerv{n+1} \fiber{\bs}{F_0} \CH_0 ) \rtimes \CH \grpd  \CG\nerv{n+1} \fiber{\bs}{F_0} \CH_0\]
\[ (g_{n+1},\ldots , g_1 , x) \cdot h := (g_{n+1},\ldots, g_1 F_1(h), \bs(h)). \]
\end{definition}
The homomorphism \( F \) also defines natural maps from the \( F \)-double groupoid \( \DG_F \) into the shift double groupoid \( \CG_{{\shift}} \):
\[ F_{{\shift}} \colon \DG_F \to \CG_{{\shift}}, \]
\[ F_{{\shift}}\nerv{n,m} \colon \CG\nerv{n+1,m} \fiber{\bs}{F_0 \circ \bt} \CH\nerv{m}  \to \CG\nerv{n+m+1} , \]
\[ F_{{\shift}}\nerv{n,m} (g_{n+1},\ldots, g_1, h_m , \ldots, h_1) := (g_{n+1},\ldots, g_1, F_1(h_m),\ldots , F_1(h_1)) . \]
\subsection{Double complex of a groupoid homomorphism}
We saw that any Lie groupoid homomorphism has an associated double simplicial structure. Of course there is also a double complex.
\begin{definition}
    Let \( F \colon \CH \to \CG \) be a Lie groupoid homomorphism. Write \[ \DC_F\nerv{n,m} := C^\infty( \CG\nerv{n+1} \fiber{\bs}{F_0 \circ \bt} \CH\nerv{m}). \] 
    Using the simplicial differentials for each row and column of the \( F \)-double groupoid, we obtain a double complex \( \DC_F \) which we call the \emph{\( F\)-double complex}. 
    \[\DC_F := \left( 
\begin{tikzcd}[column sep=large, row sep = large]
 \vdots &  \vdots &  \vdots &  \iddots \\
\DC_F^{(2,0)}  \arrow[r , "\delta_F^H"] \arrow[u, "\delta_F^V"] & \DC_F^{(2,1)}  \arrow[r , "\delta_F^H"] \arrow[u, "\delta_F^V"] & \DC_F^{(2,2)}  \arrow[r , "\delta_F^H"] \arrow[u, "\delta_F^V"] & \cdots \\
\DC_F^{(1,0)}  \arrow[r , "\delta_F^H"] \arrow[u, "\delta_F^V"] & \DC_F^{(1,1)}  \arrow[r , "\delta_F^H"] \arrow[u, "\delta_F^V"] & \DC_F^{(1,2)}  \arrow[r , "\delta_F^H"] \arrow[u, "\delta_F^V"] & \cdots \\
\DC_F^{(0,0)}  \arrow[r , "\delta_F^H"] \arrow[u, "\delta_F^V"] & \DC_F^{(0,1)}  \arrow[r , "\delta_F^H"] \arrow[u, "\delta_F^V"] & \DC_F^{(0,2)}  \arrow[r , "\delta_F^H"] \arrow[u, "\delta_F^V"] & \cdots \\
\end{tikzcd}\right) .
    \]
\end{definition}
The \( F \)-double complex is related to the groupoid cohomology of \( \CG \) as well as the groupoid cohomology of \( \CH \). We begin by describing the relationship to the groupoid cohomology of \( \CH \).

There is a natural groupoid homomorphism \( P_F \) from the bottom row of the \( F \)-double groupoid to the nerve of \( \CH\):
\[
\begin{tikzcd}
    \CG\nerv{1} \fiber{\bs}{F_0 \circ \bt} \CH\nerv{0} \arrow[d, "P_F\nerv{0}"]
    &
    \arrow[l, shift left] \arrow[l, shift right]\CG\nerv{1} \fiber{\bs}{F_0 \circ \bt} \CH\nerv{1} \arrow[d, "P_F\nerv{1}"] &
    \arrow[l, shift left, shift left] \arrow[l, shift right, shift right] \arrow[l]\CG\nerv{1} \fiber{\bs}{F_0 \circ \bt} \CH\nerv{2} \arrow[d, "P_F\nerv{2}"]
    &\arrow[l, shift left, shift left, shift left] \arrow[l, shift right, shift right, shift right] \arrow[l, shift left] \arrow[l, shift right] \cdots  \\
    \CH\nerv{0} & \arrow[l, shift left] \arrow[l, shift right] \CH\nerv{1} & \arrow[l, shift left, shift left] \arrow[l, shift right, shift right] \arrow[l] \CH\nerv{2} & \arrow[l, shift left, shift left, shift left] \arrow[l, shift right, shift right, shift right] \arrow[l, shift left] \arrow[l, shift right] \cdots
\end{tikzcd}
\]
where:
\begin{equation}\label{defn:PF.definition}
    P_F\nerv{n} \colon \CG\nerv{1} \fiber{\bs}{F_0 \circ \bt} \CH\nerv{n} \to \CH\nerv{n} \qquad (g,h_n,\ldots,h_1) \mapsto (h_n,\ldots,h_1) .
\end{equation}
This groupoid homomorphism induces a bottom augmentation:
\[ 
\begin{tikzcd}
    \DC_F\nerv{0,0} \arrow[r] & \DC_F\nerv{0,1} \arrow[r] & \DC_F\nerv{0,2} \arrow[r] & \cdots \\
    \Ch(\CH)\nerv{0} \arrow[u, "P_F^*"] \arrow[r, "\delta"] & \Ch(\CH)\nerv{1} \arrow[u, "P_F^*"] \arrow[r, "\delta "] & \Ch(\CH)\nerv{2} \arrow[u, "P_F^*"] \arrow[r, "\delta "] &\cdots
\end{tikzcd}
\]
In particular, this means that there is a natural morphism in cohomology from \( H(\CH) \) to \( H(\DC_F) \). In fact, this is an isomorphism (a fact which we will prove shortly).

One can also relate the cohomology of \( \CG \) to the cohomology of \( \DC_F \) in the following way: recall that there is a morphism of double groupoids from the \( F \)-double groupoid to the shift double groupoid of \( \CG \) given by the maps:
\[ F_{{\shift}}\nerv{n,m} \colon  \CG\nerv{n+1} \fiber{\bs}{F_0 \circ \bt} \CH\nerv{m} \to \CG\nerv{n+m+1} \] 
\[(g_{n+1},\ldots, g_1, h_m, \ldots , h_1) \mapsto (g_{n+1},\ldots, g_1, F_1(h_m),\ldots , F_1(h_1) ). \]
This induces a morphism of double complexes:
\[ F_{{\shift}}^* \colon \Ch(\CG)_{{\shift}} \to \DC_F . \]
On the other hand, there are also the standard bottom and left augmentations:
\[ B^* \colon \Ch(\CG) \to \Ch(\CG)_{{\shift}}, \qquad L^* \colon \Ch(\CG) \to \Ch(\CG)_{{\shift}} \]
which we know are quasi-isomorphisms.

We will conclude this section with a lemma that summarizes the relationships between the various groupoid cohomologies and double complexes.
\begin{lemma}\label{lemma:diagram.lemma.algebras}
    Suppose \( F \colon \CH \to \CG \) is a Lie groupoid homomorphism. Then we have a commutative diagram of simplicial structures:
    \[
    \begin{tikzcd}
    \CG  & \arrow[l, "L", swap]    \CG_{{\shift}} \arrow[d, "B", swap]  & \arrow[l, "F_{{\shift}}", swap] \arrow[d, "P_F"]  \DG_F\\
       &   \CG   & \arrow[l, "F", swap] \CH 
    \end{tikzcd}
    \]
    where \( L \) and \( B \) are the standard left and bottom augmentations and \( P_F\) is the map defined in Equation~\ref{defn:PF.definition}.

    The morphisms \( L\), \( B \) and \( P_F \) admit left retractions. Therefore, the associated augmentations of complexes \( L^* , B^* \) and \( P_F^* \) are quasi-isomorphisms.
    \end{lemma}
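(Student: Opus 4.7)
The plan is to establish the three assertions in sequence—commutativity of the diagram, existence of the left retractions, and the resulting quasi-isomorphism property—with the first two being essentially computational and the third being the substantive new content.

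The only commutativity condition genuinely imposed by the diagram is $B \circ F_{{\shift}} = F \circ P_F$ on the right-hand square, which I would verify at each bidegree $(0,m)$ by a direct computation: writing $B$ at the simplicial level as the face map $d^{m+1}_{m+1}$ and unpacking the defining formulas for $F_{{\shift}}$ and $P_F$ from Section~\ref{sec:groupoid homomorphisms}, both composites send $(g, h_m, \ldots, h_1)$ to $(F_1(h_m), \ldots, F_1(h_1))$. The existence of the left retractions of $L$ and $B$, and the quasi-isomorphism statements for $L^*$ and $B^*$, are already established by Lemma~\ref{lemma:unit.is.retract.of.shift}, whose proof assembles the standard retracts of right/left shifts furnished by Example~\ref{example:standard.left.retract} and its cosimplicial counterpart Example~\ref{example:example.standard.left.retract.vectorspace} into column- and row-wise contracting homotopies and then invokes the Augmentation Lemma.

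The substantive new step is the retraction for $P_F$. My key observation is that, for each fixed $m$, the $m$-th column $\DG_F\nerv{\bullet, m}$ is precisely the nerve of the left action groupoid $\CG \ltimes \bigl(\CG_1 \fiber{\bs}{F_0 \circ \bt} \CH\nerv{m}\bigr)$. This groupoid is principal in the sense of Lemma~\ref{example:principal.groupoid.retraction}: the action is free, since $g' \cdot g_1 = g_1$ forces $g'$ to be a unit; its orbit space is $\CH\nerv{m}$; and the orbit projection is exactly $P_F\nerv{m}$, which is a surjective submersion. The unit embedding of $\CG$ then supplies a canonical smooth global section
\[ \sigma_m \colon \CH\nerv{m} \to \CG_1 \fiber{\bs}{F_0 \circ \bt} \CH\nerv{m}, \qquad (h_m, \ldots, h_1) \mapsto \bigl(u(F_0(\bt(h_m))),\, h_m, \ldots, h_1\bigr), \]
and applying Lemma~\ref{example:principal.groupoid.retraction} for each $m$ yields a left retract of the $m$-th column along $P_F\nerv{m}$. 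These column-wise retracts assemble into a simplicial left retract of $P_F$ itself because $\sigma_m$ depends on $m$ through a uniform formula. Pulling back to cochains and applying Lemma~\ref{lemma:trivial.simplicial.cohomology} column-wise then shows that the bottom augmentation $P_F^* \colon \Ch(\CH) \to \DC_F$ is column-acyclic, and the Augmentation Lemma~\ref{lemma:augmentation.lemma} delivers that $P_F^*$ is a quasi-isomorphism.

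The main obstacle I anticipate is really just careful bookkeeping: one must verify that the face and degeneracy structure on the $m$-th column of $\DG_F$ inherited from the double groupoid genuinely coincides with the nerve of the action groupoid described above, so that the hypotheses of Lemma~\ref{example:principal.groupoid.retraction} are truly met. Once that identification is pinned down, the rest of the argument assembles routinely from the machinery already set up in Sections~\ref{sec:augumentations and retracts} and~\ref{sec:shifted retractions of groupoids}.
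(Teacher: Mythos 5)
Your proposal is correct, and the retraction it produces for \( P_F \) is in fact identical to the one the paper writes down; the difference is purely in how you arrive at it. The paper's proof constructs the left retract of the \( m \)-th column of \( \DG_F \) by hand --- \( r^0 \) inserts the unit \( \bu \circ F_0 \circ \bt(h_m) \) in front of \( (h_m,\ldots,h_1) \), and \( r^n \) inserts \( \bu \circ \bs(g_1) \) after \( g_1 \) --- and then verifies the retract axioms by direct computation. You instead observe that the \( m \)-th column is the nerve of the principal action groupoid \( \CG \ltimes (\CG_1 \fiber{\bs}{F_0 \circ \bt} \CH\nerv{m}) \) with orbit projection \( P_F\nerv{m} \), exhibit the canonical global unit section, and invoke Lemma~\ref{example:principal.groupoid.retraction}; unwinding that lemma's formula \( r^n(a_{n-1},\ldots,a_1) = (a_{n-1},\ldots,a_1,(a_{n-1}\cdots a_1)^{-1}\cdot r^1(t(a_{n-1}))) \) in the action groupoid reproduces exactly the paper's maps. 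This buys you a genuinely shorter argument, since the retract axioms are already verified in Lemma~\ref{example:principal.groupoid.retraction} and need not be rechecked, at the cost of the small bookkeeping step you correctly flag (identifying the column's face maps with those of the action groupoid nerve, which the paper's definition of \( \DG_F \) already asserts). Two minor remarks: your claim that the columnwise retracts ``assemble into a simplicial left retract of \( P_F \) itself'' (i.e.\ compatibly with the horizontal face maps) is stronger than what the lemma asserts or what the augmentation argument requires --- only the columnwise retractions are needed, so you should either prove that compatibility or drop the claim; and your explicit check of commutativity of the right-hand square is a welcome addition, as the paper leaves it implicit.
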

\begin{proof}
    Lemma~\ref{lemma:unit.is.retract.of.shift} says that \( L^* \) and \( B^* \) are quasi-isomorphisms since they are the standard left and bottom augmentations of \( \Ch(\CG)_{\shift} \). The only part of this lemma that has not already been proved is the claim that  \( P_F\) admits a left retraction. 

    Let us fix \( m \in \mathbb{N} \). We will construct a left retract of the \( m \)-column of \( \DG_F \) to \( \CH\nerv{m} \) along \( P_F \). 
    
    For \( n \in \mathbb{N} \) let \( r^n \) be defined as follows:
\[ r^0 \colon \CH\nerv{m} \to \CG\nerv{1} \fiber{\bs}{F_0 \circ \bt} \CH\nerv{m} \]
\[ (h_m,\ldots,h_1) \to (\bu \circ F_0 \circ \bt(h_m), h_m, \ldots , h_1) \]
and for \( n \ge 1 \):
\[ r^n \colon \CG\nerv{n} \fiber{\bs}{F_0 \circ \bt} \CH\nerv{m} \to \CG\nerv{n+1} \fiber{\bs}{F_0 \circ \bt} \CH\nerv{m} \]
\[(g_n,\ldots , g_1 , h_m,\ldots , h_1) \mapsto (g_n, \ldots , g_1 , \bu \circ \bs(g_1) , h_m, \ldots , h_1) \]
To conclude that this is a left retract we must show that:
\begin{itemize}
    \item compatibility between \( r \) and \( p \):
    \[ d^1_1 \circ r^1 = r^0 \circ p ;\]
    \item for all \( n \ge 1 \):
    \[ d^n_0 \circ r^n = \Id;\]
    \item for all \( 1 \le i \le n+1 \):
    \[ d^{n+1}_i \circ r^{n+1} = r^n \circ d^n_{i-1}, \]
\end{itemize}
where the maps \( \{ d^n_i \} \) are the face maps for the \( m \)-column of \( \DG_F \). However, these face maps are precisely the same as the face maps inherited from the \( \CG \)-part of \( \DG_F \).

For example, computing the LHS of the equation from the first bullet point gives:
\begin{align*} 
d^1_1 \circ  r^1 (g_1,h_m , \ldots , h_1)) &= d^1_1 (g_1, \bu \circ F_0 \circ \bt(h_m) , h_m , \ldots , h_1) \\ 
&= (  \bu \circ F_0 \circ \bt(h_m) , h_m, \ldots , h_1) \\
&=  r^0 \circ p (g_1, h_m, \ldots , h_1)   
\end{align*}
The second bullet point will hold as a consequence of the right unit axiom. The third bullet point will hold as a consequence of the fact that these fact maps do not ``touch'' the inserted unit. We leave it to the reader to verify the details of these equations (it is just a direct calculation from the definition).
\end{proof}

\section{Cosimplicial Modules over groupoids}\label{sec:cosimplicial modules over groupoids}
\subsection{Sheaves of modules}
Suppose \( M \) is a smooth manifold. We write \( C^\infty_M \) to denote the sheaf of functions on \( M \). 
\begin{definition}
A sheaf of modules \( \CE \) on a manifold \( M\) consists of a sheaf of \( C^\infty_M \)-modules \( \CE \). Given a smooth map \( f \colon N \to M  \) the \emph{pullback} \( f^* \CE \) of \( \CE \) along \( f \) is sheaf:
    \[ U \mapsto \CE(M) \underset{C^\infty_M(M)}{\otimes} C^\infty_N(U) \]
    where \( C^\infty_M(M) \) acts on \( C^\infty_N(U) \) along the homomorphism \( \alpha \mapsto f^* \alpha |_U \) and we take the tensor product \( \otimes \) to be taken over \emph{locally} finite sums/ 
     In other words, elements of \( f^*\CE( U) \) are represented by formal sums of the form:
    \[ \sum_{a \in A} e_a \otimes \rho_a \]
    where \( \rho_a \) is a locally finitely supported family of functions on \( U \).
\end{definition}
\begin{remark}Our definition of the pullback implicitly takes advantage of some particularly nice properties of sheaves of modules on manifolds. A standard argument using partitions of unity verifies that the pullback operation we have defined indeed results in a sheaf. 
\end{remark}
\begin{example}
    Suppose \( \CE \) is a sheaf of modules on a manifold \( M \) and let \(\iota \colon U \to M \) be the inclusion of an open subset. Then the pullback sheaf \( \CE|_U := \iota^* U \) has global sections \( \iota^* \CE (U) = \CE(U)  \).
\end{example}
\begin{example}
    Suppose \( E \to M \) is a vector bundle. Then \( \Gamma_E \) the sheaf of sections of \( E \) is a sheaf of modules on \( M \). If \( f \colon N \to M \) is a smooth map then \( f^* \Gamma_E \) is canonically isomorphic to \( \Gamma_{f^* E}\) where \( f^* E  = E \fiber{\pi,f} N \to N \) is the usual pullback bundle.
\end{example}
\begin{definition}
    Suppose \( \CE \) is a sheaf of modules on \( M \) and \( \CW \) is a sheaf of modules on \( N \). A \emph{morphism} from \((F,f) \colon \CE \to \CW \) consists of a smooth map \( f \colon M \to N \) and a morphism of \( C^\infty_M \)-modules:
    \[ F \colon \CE \to f^* \CW .\]
    A \emph{comorphism} \((F,f) \colon \CW \coto \CE\) consists of a smooth map \( f \colon M \to N \) and a morphism of \( C^\infty_M\)-modules:
    \[ F \colon f^*\CW \coto \CE.\]
\end{definition}
\begin{example}
    Suppose \( E \to M \) and \( W \to N \) are vector bundles. If \( F \colon E \to W \) is a vector bundle map over \( f \colon M \to N \). One can factor a bundle map through the pullback bundle by defining a pushforward operation:
    \[ F_* : E \to f^* W   \]
    There is a canonical isomorphism between \( \Gamma_{f^* W} \) and the pullback sheaf of modules \( f^* \Gamma_W\). Hence, this yields a morphism of sheaves of modules 
    \[ (F_* , f) \colon \Gamma_E \to \Gamma_W  \]

    On the other hand, if we are given a smooth map \( f \colon M \to  N \) and a morphism of vector bundles \( G \colon f^*W \to E \) covering the identity, then one obtains a natural comorphism of modules \( E \to M \)
    \[ (G^*,f) \colon \Gamma_W \coto \Gamma_E \]
\end{example}
\begin{example}
    Suppose \( f \colon M \to N \) is a smooth map. The sheaves of \( k \)-forms \( \Omega^k_M\) and \( \Omega^k_N \) are sheaves of modules on \( M \) and \( N \). The usual pullback \( (f^*,f) \colon \Omega^k_M \coto \Omega^k_N \) is a comorphism.
\end{example}
\subsection{Good cosimplicial modules}
Not all cosimplicial modules will be well behaved with respect to weak equivalences. We will focus on a subset of them which we call ``good'' cosimplicial modules.

The property of being good can be thought of as a kind of Kan condition for the associated cosimplicial vector space. It's slightly stronger than the usual Kan conditions however and one can roughly think of it as saying: ``Kan filling maps of the groupoid can be lifted to Kan filling maps of the cosimplicial module in a way compatible with face maps.''
\begin{definition}\label{definition:cosimplicial.module}
    Suppose \( \CG \) is a Lie groupoid. A \emph{cosimplicial module} on \( \CG \) is a cosimplicial object in the category of modules with comorphisms. In other words, it consists of the following data:
    \begin{itemize}
        \item for each natural number \( n \ge 0\), a sheaf of modules \( \CE\nerv{n} \) on \( \CG\nerv{n} \).
        ,
        \item for each face map \( d^n_i \), a comorphism of modules 
        \[(\phi^n_i, d^n_i)  \colon \CE\nerv{n-1} \coto \CE\nerv{n},\]
        \item for each degeneracy map \( s^n_i \), a comorphism of modules 
        \[ (\sigma^n_i, s^n_i)  \colon \CE\nerv{n+1} \coto \CE\nerv{n} .\]
    \end{itemize}
    (Goodness) A cosimplicial module is \emph{good} if given any local section \( r^0 \colon U \to \CG_1 \) of the target map with associated target family \( \{ r^n \}_{n \in \mathbb{N}} \), there exists a family of comorphisms 
    \[ \left\{ ((r^n)^\#, r^n)  \colon \CE\nerv{n+1}|_{U\nerv{n+1}} \coto \CE\nerv{n}|_{U\nerv{n}} \right\}_{n \in \mathbb{N}} \]
    which satisfies the target family relations:
\[ (r^n)^\# \phi^{n+1}_i = \begin{cases}
    \Id, & i = 0, \\ \phi^n_{i-1} (r^{n-1})^{\#}, & 1 \le i \le n .
\end{cases}
\]
\end{definition}

At this time we are not aware of a natural example of a cosimplicial module over a groupoid which is not ``good.'' Any module which is sufficiently functorial is easily seen to be good.
\begin{example}
    Suppose \( M \) is a smooth manifold, thought of as a trivial groupoid \( M \grpd M \). Then the face and degeneracy maps are all just the identity. In this case, any cosimplicial module on \( M \grpd M \) will be good.
\end{example}
\begin{example}
    Let \( \CG \) be a Lie groupoid and let \( k \ge 0\) be an integer. Consider the sheaves of differential forms on the nerve \(\{ \Omega_{\CG\nerv{n}}^k \} \) then we we have coface maps:
    \[ ( (d^n_i)^* , d^n_i) \colon \Omega_{\CG\nerv{n-1}}^{k} \coto \Omega_{\CG\nerv{n}}^k  \] 
    which are the coface maps for a cosimplicial structure.
    
    It is also clearly good since the functorial nature of this sheaf means any target family can certainly be lifted to the level of modules.
\end{example}
\begin{example}\label{example:representation.as.cosimplicial.module}
    Let \( \CG \) be a Lie groupoid and suppose we have a vector bundle \( \pi \colon E \to \CG_0 \) together with a (right) representation of \( \CG \) on \(E \).

    One can construct a cosimplicial module by considering the action groupoid \( E \rtimes \CG \grpd E \). Note that each level of the nerve of the action groupoid is of the form \(E \fiber{\pi}{\bt} \CG\nerv{n} \) which is a vector bundle over \( \CG\nerv{n} \). The face maps of the nerve of \( \CG \ltimes E \grpd E \) are fiber-wise isomorphisms so they can be equivalently thought of as morphisms or comorphisms. Therefore, if we take:
    \[ \CE\nerv{n} := \Gamma( E \fiber{\pi}{\bt} \CG\nerv{n} ) \]
    for each face map \( d^n_i \) of \( \CG\nerv{n} \) one has a pullback map:
    \[ \phi^n_i \colon \CE\nerv{n-1} \to \CE\nerv{n}. \]
    Hence we obtain a cosimplicial module over \( \CG \).
\end{example}
\begin{lemma}
    The simplicial module in Example~\ref{example:representation.as.cosimplicial.module} is good.
\end{lemma}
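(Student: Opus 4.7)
The plan is to construct the required comorphisms $(r^n)^\#$ as canonical ``restriction along $r^n$'' maps, and then to verify the target family relations by reducing them to the analogous relations at the level of underlying maps already established in Section~\ref{sec:shifted retractions of groupoids}.

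The central observation is that the vector bundle $E\times^{\bt}_\pi \mathcal{G}\nerv{n}$ is nothing other than the pullback of $\pi\colon E\to \mathcal{G}_0$ along the ``target-of-topmost-arrow'' map $\bt\colon (g_n,\ldots,g_1)\mapsto\bt(g_n)$. Since $r^n$ simply appends a new arrow at the source end of a composable string, it preserves the topmost arrow, so $\bt\circ r^n=\bt$ on $U\nerv{n}$. This furnishes a canonical isomorphism of vector bundles
\[ (r^n)^*\bigl(E\times^{\bt}_\pi U\nerv{n+1}\bigr) \;\cong\; E\times^{\bt}_\pi U\nerv{n} \]
over $U\nerv{n}$, and I would take $(r^n)^\#$ to be the comorphism induced by this identification; concretely, it acts on sections by $(r^n)^\#(\tau)(g_n,\ldots,g_1):=\tau(r^n(g_n,\ldots,g_1))$.

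To verify the target family relations, I would observe that the same topmost-arrow-preserving phenomenon holds for every non-top face map: for $0\le i\le n$, the map $d^{n+1}_i$ preserves the topmost arrow of the $(n+1)$-tuple, and so the coface $\phi^{n+1}_i$ acts at the level of sections by plain pullback along $d^{n+1}_i$, with no twist by the representation (the twist only enters at the top face $i=n+1$, which is outside the required range). Consequently, both $(r^n)^\#\phi^{n+1}_i$ and $\phi^n_{i-1}(r^{n-1})^\#$ act on a section $\sigma\in\mathcal{E}\nerv{n}$ by pullback along some composition of underlying maps, and the required equalities drop out by invoking the identities $d^{n+1}_0\circ r^n=\mathrm{Id}$ and $d^{n+1}_i\circ r^n=r^{n-1}\circ d^n_{i-1}$ for $1\le i\le n$ from the target family lemma.

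I do not expect a serious obstacle. The only mild point to watch is checking that the right-hand side $\phi^n_{i-1}(r^{n-1})^\#$ makes sense as a comorphism between the restricted sheaves $\mathcal{E}\nerv{n-1}|_{U\nerv{n-1}}$ and $\mathcal{E}\nerv{n}|_{U\nerv{n}}$; this is automatic because the range $1\le i\le n$ forces $i-1\le n-1$, so $d^n_{i-1}$ maps $U\nerv{n}$ into $U\nerv{n-1}$ and $\phi^n_{i-1}$ can be restricted accordingly.
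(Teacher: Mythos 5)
Your construction agrees with the paper's: the paper takes the bundle maps $R^n(e,g_n,\ldots,g_1)=(e,r^n(g_n,\ldots,g_1))$ covering the target family and lets $(r^n)^\#$ be the induced pullback on sections, which is exactly your map $\tau\mapsto\tau\circ r^n$ under the canonical identification coming from $\bt\circ r^n=\bt$; your reduction of the target family relations to the underlying identities $d^{n+1}_0\circ r^n=\Id$ and $d^{n+1}_i\circ r^n=r^{n-1}\circ d^n_{i-1}$ is just a more explicit version of the paper's assertion that the $R^n$ form a target family for $E\rtimes\CG$. The only nitpick is that for $i=n$ the face map does not preserve the topmost arrow itself (it becomes $g_{n+1}g_n$) but only its target, which is all that is needed for the $E$-component, so your argument goes through unchanged.
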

\begin{proof}
    Suppose \( r^0 \colon U \to \CG_1 \) is a section of the target map and \( \{ r^n \}_{n \in \mathbb{N}} \) is the associated target family. Now consider the bundle maps:
    \[ R^n \colon E \fiber{\pi}{\bt} U\nerv{n}  \to E\fiber{\pi}{\bt} U\nerv{n+1} , \qquad R^n(e,g_n,\ldots , g_1)) := (e,r^n(\pi(e))) . \]
    These fiberwise linear maps cover the target family on \( \CG \) and one sees easily that they define a target family of the groupoid \( E \rtimes \CG \) over \( E|_U \). Since they are fiberwise isomorphisms, they define pullbacks at the level of sections and we define \( (r^n)^\# \colon \CE\nerv{n+1} \to \CE\nerv{n}\) to be these pullback maps.
\end{proof}
\subsection{Cohomology of cosimplicial modules}
Since the global sections of a cosimplicial module constitute a cosimplicial vector space. They have a natural cohomology theory.
\begin{definition}
    Let \( \CE \) be a cosimplicial module on \( \CG \). The global sections of \( \CE \) form a cosimplicial vector space. Then the \emph{cochain complex} of \( \CE \) is defined to be the natural cochain complex associated to this cosimplicial vector space. More explicitly, \( \Ch(\CE) \) is the graded vector space:
    \[ \Ch(\CE) := \bigoplus_{n \in \mathbb{N}} \CE\nerv{n}(\CG\nerv{n})  \]
    equipped with the degree one graded map 
    \[ \delta \colon \Ch(\CE) \to \Ch(\CE)  \]
    \[ \delta^n := \sum_{i=0}^{n+1} (-1)^i \phi^n_i  \]
\end{definition}
\subsection{Pullbacks of cosimplicial modules}

\begin{definition}
    Let \( \CG \) be a Lie groupoid and let \( \CE \) be a cosimplicial module on \( \CG \). If \( F \colon \CH \to \CG \) is a groupoid homomorphism then we obtain a pullback \( F^* \colon \Ch(\CG) \to \Ch(\CH) \) at the level of groupoid cochains.
    
    We define the pullback \( F^* \CE \) to be the cosimplicial module on \( \CH \) given by the pullbacks along the natural maps at the level of the nerves:
    \[ (F^* \CE)\nerv{n} := \CE\nerv{n} \underset{C^\infty_{\CG\nerv{n}}}{\otimes} C^\infty_{\CH\nerv{n}}. \]
    The coface and codegeneracies are:
    \[ (F^* \phi^n_i) := (\phi^n_i) \otimes (d^n_i)^* , \qquad (F^* \sigma^n_i) := (\sigma^n_i) \otimes (s^n_i)^*, \]
    where, in the above \( d^n_i \) and \( s^n_i \), refer to the face and degeneracy maps for \( \CH \).

    Given such a pullback, at each level \( n \in \mathbb{N} \), consider the map:
    \[ (F^n)^\# \colon \CE\nerv{n} \to (F^* \CE )\nerv{n} \qquad e \mapsto e \otimes 1 \]
    This induces a morphism at the level of chain complexes:
    \[ F^\#  \colon \Ch(\CE) \to \Ch(F^*\CW)  . \]
\end{definition}
\begin{example}
    If \(E \to \CG_0 \) is a right representation of \( \CG \) then the pullback of the associated cosimplicial module \( \CE \) along a groupoid homomorphism \( F \) is just the sections of the pullback bundles:
    \[ F^* \left( E \fiber{\pi}{\bt} \CG\nerv{n} \right) \to \CH\nerv{n} \]
\end{example}
\section{Morita invariance}\label{sec:morita invariance}
In this section we will prove our main theorems which demonstrate Morita invariance in a variety of contexts for cosimplicial modules. The key to all of our proofs is the ability to construct global left retractions out of local ones so that is what we will show in the first subsection.
\subsection{Left retracts of cosimplicial modules}
\begin{definition}
    Let \( \CE \) be a cosimplicial module on \( \CG \). Let \( \CW \) be a module over a manifold \( B \) and suppose we have a comorphism \( (L,p ) \colon \CW \coto \CE \) where \( p \colon \CG\nerv{0} \to B  \) and \( L \colon \mathcal{W} \to \mathcal{E}\nerv{0} \) are augmentations of their respective structures.
    
    A \emph{left retract} of \( \CE \) to \( \CW \) along \( (L,p) \) consists of a left retract \( \{ r^n \}_{n \in \mathbb{N}}  \) along \( p \) together with a left retract \( \{ \theta^n\}_{n \in \mathbb{N}}  \) of the cosimplicial vector space \( \CE \) to \( \CW \) along \( L \) such that the combined pairs constitute module comorphisms. In other words, we have module comorphisms:
    \[ (\theta^0, r^0) \colon \CE\nerv{0} \coto \CW \]
    and for \( n \ge 1 \)
    \[  (\theta^n, r^n) \colon \CE\nerv{n} \coto \CE\nerv{n-1} \]
    which satisfy the axioms for a left retract along \( (L,p) \).
\end{definition}
Since a left retract in the category of module comorphisms constitutes a left retract in the category of vector spaces (when thinking in terms of global sections) it follows that the existence of such a retract implies that the augmentation \( \mathcal{W} \to \CE\nerv{0} \) is acyclic at the level of global sections.

Unfortunately, it is a too optimistic to hope to construct global left retractions. Even when \( L \) is acyclic, it may be the case that no left retract exists inside of the category of module comorphisms. The basic problem is that surjective submersions do not admit global sections. 

However, it turns out that if one can construct a left retraction locally (at the level of comorphisms) then it is actually possible to construct a global left retract (at the level of \emph{vector spaces}). The trick is to take advantage of partitions of unity to piece together local retractions into global retractions.

To understand how this works, let us first suppose we have a cosimplicial module \( \CE \) on \( \CG \), a module \( \CW \) on a manifold \( B \) together with an augmentation \( (L,p) \colon \CW \to \CE\nerv{0}  \). If \( \{ U_a \}_{a \in A} \) is an open cover of \( B \) then, for each \( a \in A \), one can define restricted groupoids \( \CG_a\) where the objects are
\[ (\CG_a)_0 := p^{-1}(U_a) \] 
and the arrows are 
\[ (\CG_a)_1 := \bt\inv(p\inv(U_a))  = \bs\inv(p\inv(U_a)) . \]
We can also restrict the modules to this cover:
\[ \CW_a := \CW|_{U_a} \qquad \CE_a\nerv{n} := \CG_a\nerv{n} \]
One also obtains a restricted invariant comorphism: \( (L_a, p_a) \colon \CW_a \to \CE_a\) defined in the obvious way.
\begin{lemma}\label{lemma:local.to.global.left.retract}
    Suppose we have a module \( \CW \) over a manifold \( B \) and a cosimplicial module \( \CE \) on \( \CG \). Suppose we have a comorphism \( (L,p ) \colon \CW \to \CE\nerv{0} \) where \( \bt \circ p = \bs \circ p \). Suppose further that there exists an open cover \( \{ U_a \}_{a \in A} \) of \( B \) such that for all \( a \in A \), there exists a left retract:
    \[ \{ ( \theta^n_a , r^n_a ) \}_{n \in \mathbb{N}}  \]
    of the cosimplicial module \( \CE_a \) along \( (L_a,p_a) \). Then there exists a left retract (of a cosimplicial vector space) of \( \CE \) along \( L \). Hence, the augmentation \( L \colon \CW \to \CE\nerv{0} \) is acyclic.
\end{lemma}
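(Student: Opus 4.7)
The plan is to glue the local left retracts into a \emph{global} left retract at the level of cosimplicial vector spaces (forgetting the module/comorphism structure), after which the conclusion follows immediately from Lemma~\ref{lemma:trivial.simplicial.cohomology}. The essential tool is a partition of unity $\{\rho_a\}_{a \in A}$ on $B$ subordinate to $\{U_a\}_{a \in A}$, which I will pull back to every level of the nerve of $\CG$ in a way that is compatible with all face maps.

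For this to work, I first set up compatible functions $\rho_a^n$ on $\CG\nerv{n}$ supported in $\CG_a\nerv{n}$. Define $\pi^n \colon \CG\nerv{n} \to B$ by $(g_n,\ldots,g_1) \mapsto p(\bt(g_n))$; the invariance assumption $p \circ \bs = p \circ \bt$ makes this well-defined and implies that $\pi^{n-1} \circ d^n_i = \pi^n$ for every face map (each face map either deletes an end morphism or composes two adjacent ones, and in both cases the surviving objects remain in the same orbit). Setting $\rho_a^n := (\pi^n)^* \rho_a$ therefore yields the crucial identity $(d^n_i)^* \rho_a^{n-1} = \rho_a^n$ for every $n$ and $i$, and $\{\rho_a^n\}$ is a partition of unity on $\CG\nerv{n}$ subordinate to $\{\CG_a\nerv{n}\}$.

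Next I define the global retract by
\[ \theta^n(e) := \sum_{a \in A} \rho_a^{n-1} \cdot \theta^n_a(e|_{\CG_a\nerv{n}}) \qquad (n \ge 1) , \]
together with the analogous formula for $\theta^0$ taking values in $\CW(B)$, where each summand is extended by zero outside $\CG_a\nerv{n-1}$ (resp.\ $U_a$). Local finiteness of the partition makes these well-defined. The three axioms of a left retract then reduce to direct calculations combining two ingredients: the $C^\infty$-linearity of each coface comorphism $\phi^n_i$, which converts multiplication by $\rho_a^{n-1}$ in the target of $\theta^n$ into multiplication by $(d^n_i)^*\rho_a^{n-1} = \rho_a^n$ in the source, and the local retract identities for each $\theta^n_a$. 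For instance, the shifting identity $\theta^{n+1} \phi^{n+1}_i = \phi^n_{i-1} \theta^n$ with $1 \le i \le n+1$ unfolds (writing $e|_a := e|_{\CG_a\nerv{\bullet}}$) as
\begin{align*}
\phi^n_{i-1} \theta^n(e)
&= \sum_a (d^n_{i-1})^*(\rho_a^{n-1}) \cdot \phi^n_{i-1} \theta^n_a(e|_a) \\
&= \sum_a \rho_a^n \cdot \theta^{n+1}_a \phi^{n+1}_i(e|_a) = \theta^{n+1} \phi^{n+1}_i(e) ,
\end{align*}
while $\theta^n \phi^n_0(e) = \sum_a \rho_a^{n-1} \cdot e|_a = e$, and $\theta^1 \phi^1_1 = L \theta^0$ uses additionally that $L$ is a comorphism so $L(\rho w) = (p^*\rho) L(w)$.

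With the vector-space-level retract $\{\theta^n\}$ in hand, Lemma~\ref{lemma:trivial.simplicial.cohomology} directly yields acyclicity of the augmentation $L \colon \CW \to \CE\nerv{0}$. The main subtlety I expect to have to justify carefully is that restriction to $\CG_a$ commutes with the coface maps, so that $\phi^n_i(e)|_{\CG_a\nerv{n}} = \phi^n_i(e|_{\CG_a\nerv{n-1}})$ and the local retractions actually apply to the restricted sections. This should follow from the fact that $\CG_a$ is the full subgroupoid over the orbit-saturated open set $p^{-1}(U_a) \subset \CG_0$, a condition preserved by every face and degeneracy map.
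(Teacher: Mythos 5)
Your proof is correct and follows essentially the same route as the paper: pull the partition of unity on \( B \) back to each level of the nerve using \( p \circ \bs = p \circ \bt \) so that it commutes with all coface comorphisms, glue the local retracts by \( \theta^n = \sum_a \rho_a^{n-1}\,\theta^n_a \), and invoke Lemma~\ref{lemma:trivial.simplicial.cohomology}. You in fact supply more detail than the paper does (the explicit maps \( \pi^n \), the verification of the retract identities, and the check that restriction to the saturated subgroupoids \( \CG_a \) commutes with the coface maps), all of which is sound.
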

\begin{proof}
    Let \( \rho_a \) be a locally finite partition of unity subordinate to \( \{ U_a \}_{a \in A} \). Since \( \bt \circ p = \bs \circ p \), for each \( n \in \mathbb{N} \) there exists a partition of unity \( \{ \rho^n_a \} \) on \( \CG\nerv{n}\) subordinate to the open cover \( \{ \CG_a\nerv{n} \}_{a \in A} \). This partition of unity is compatible with face maps in the sense that:
    \[\forall a \in A ,\  1 \le i \le n ,\  e \in \CE\nerv{n-1}, \qquad  \phi^n_i ( \rho^{n-1}_a \cdot e ) = \rho^{n}_a \cdot \phi^n_i(e). \]

    In this way, we can use the partition of unity to construct a global left retract (of cosimplicial vector spaces):
\[ \theta^n := \sum_{a \in A} \rho_a\nerv{n} \theta^n_a \]
where on the right hand side we have implicitly taken advantage of the ``extension maps'' \( \CE_a \to \CE \) given by \( e \mapsto \rho_a e \).

Since the partitions of unity are all compatible with the cosimplicial structure, it follows that \( \theta^n \) is a left retract of cosimplicial vector spaces along \( L \).
\end{proof}
\subsection{Morita invariance of groupoid cohomology}\label{subsection:morita.invariance.groupoid.cohomology}
Before we prove Morita invariance in the more general context, it is useful to explain the proof the ``base case'' of groupoid cohomology. 

The following result is well known, c.f. \cite{Cra03}, however, we will make reference to the proof of this theorem multiple times later. 
\begin{theorem}\label{theorem:morita.groupoid.cohomology}
    Suppose \( F \colon \CH \to \CG \) is a weak equivalence. Then \( F^* \colon \Ch(\CG) \to \Ch(\CH) \) is a quasi-isomorphism.
\end{theorem}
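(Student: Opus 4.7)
The plan is to factor the problem through the shift double complex and the $F$-double complex, and then extract a clean statement at the row level where the weak equivalence hypothesis can be applied. By Lemma \ref{lemma:diagram.lemma.algebras}, we have the commuting square of total cochain complexes
\[
\begin{tikzcd}
\Ch(\CG) \arrow[r, "F^*"] \arrow[d, "B^*"'] & \Ch(\CH) \arrow[d, "P_F^*"] \\
\Ch(\CG)_{\shift} \arrow[r, "F_{\shift}^*"'] & \DC_F
\end{tikzcd}
\]
in which the vertical maps $B^*$ and $P_F^*$ are already known to be quasi-isomorphisms. A two-out-of-three argument then reduces the theorem to showing that $F_{\shift}^*$ is itself a quasi-isomorphism.

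To handle $F_{\shift}^*$, I would compare it with the standard left augmentation $L_\CG^* \colon \Ch(\CG) \to \Ch(\CG)_{\shift}$, which is a quasi-isomorphism by Lemma \ref{lemma:unit.is.retract.of.shift}. The composite $\widetilde{L} := F_{\shift}^* \circ L_\CG^* \colon \Ch(\CG) \to \DC_F$ is a left augmentation of $\DC_F$, since morphisms of double complexes preserve left augmentations. If one can show $\widetilde{L}$ is \emph{acyclic}, then the Augmentation Lemma (Lemma \ref{lemma:augmentation.lemma}) implies $\widetilde{L}$ is a quasi-isomorphism, and a second application of two-out-of-three forces $F_{\shift}^*$ to be a quasi-isomorphism as well.

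Everything then hinges on the acyclicity of $\widetilde{L}$, i.e.\ exactness of each augmented row. Unpacking the definitions, the $n$-th row of $\DC_F$ coincides with the groupoid cochain complex of the right action groupoid $\CK_n := (\CG\nerv{n+1} \fiber{\bs}{F_0} \CH_0) \rtimes \CH$, and $\widetilde{L}$ restricted to this row is pullback along the map
\[ q_n \colon \CG\nerv{n+1} \fiber{\bs}{F_0} \CH_0 \to \CG\nerv{n}, \qquad (g_{n+1}, \ldots, g_1, x) \mapsto (g_{n+1}, \ldots, g_2). \]
The structural claim I would verify is that $\CK_n$ is a principal Lie groupoid whose projection to the orbit space is exactly $q_n$. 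Freeness of the $\CH$-action is the one place the full weak equivalence hypothesis is needed: since $F_! \colon \CH \to F_0^! \CG$ is an isomorphism, $F$ restricts to isomorphisms on isotropy groups, so if $(g, x) \cdot h = (g, x)$ then $F_1(h)$ must be a unit and hence $h$ is as well. Meanwhile, $q_n$ is a surjective submersion because it arises as a base change of the surjective submersion $(g_1, x) \mapsto \bt(g_1)$ guaranteed by the smooth essential surjectivity of $F_0$.

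Once $\CK_n$ is identified as a principal groupoid over $\CG\nerv{n}$, the projection $q_n$ admits local sections over some open cover $\{U_\alpha\}$ of $\CG\nerv{n}$. Lemma \ref{example:principal.groupoid.retraction} applied to each restriction $\CK_n|_{q_n^{-1}(U_\alpha)}$ produces a left retract along $q_n$ at the level of smooth maps; pulling back yields a local left retract of the associated cosimplicial module of functions. Lemma \ref{lemma:local.to.global.left.retract} then assembles these local retracts via a partition of unity subordinate to $\{U_\alpha\}$ into a global left retract at the level of groupoid cochains, and Lemma \ref{lemma:trivial.simplicial.cohomology} converts this into the desired acyclicity. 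The main obstacle in this argument is the freeness of the $\CH$-action on $\CG\nerv{n+1} \fiber{\bs}{F_0} \CH_0$; the remaining steps are a direct assembly from the machinery already built up in earlier sections.
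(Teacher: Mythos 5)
Your proposal is correct, and its skeleton is exactly the paper's: the same commutative square from Lemma~\ref{lemma:diagram.lemma.algebras}, the same reduction via two-out-of-three to showing that $F_{\shift}^*\circ L$ is an acyclic left augmentation of $\DC_F$, and the same local-to-global assembly through Lemma~\ref{lemma:local.to.global.left.retract}. Where you diverge is in how the local retracts of the rows are produced. The paper writes down the retraction $\rho^m$ explicitly, inserting into each tuple the unique arrow $\overline{h}\in\CH_1$ with prescribed source, target, and $F$-image; you instead identify the $n$-th row of $\DG_F$ as the nerve of the action groupoid $\CK_n=(\CG\nerv{n+1}\fiber{\bs}{F_0}\CH_0)\rtimes\CH$, argue that it is principal over $\CG\nerv{n}$ with orbit projection $q_n$, and invoke Lemma~\ref{example:principal.groupoid.retraction}. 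This is a cleaner conceptual packaging (and if you unwind the construction of Lemma~\ref{example:principal.groupoid.retraction} for $\CK_n$ you recover essentially the paper's $\rho^m$), at the mild cost of obscuring the compatibility of the retraction with the target family on $\CG$, which the paper records in a remark and reuses in the proofs of Theorems~\ref{thm:weakequivalencecosimplicial} and~\ref{theorem:morita.invariance.for.pullback.complexes}. One small caution: you present freeness of the $\CH$-action as the only place the weak equivalence is used, but identifying the \emph{fibers} of $q_n$ with the \emph{orbits} of $\CK_n$ (which Lemma~\ref{example:principal.groupoid.retraction} genuinely requires, since $r^1$ is defined as the unique arrow joining a point to the section) also needs the surjectivity of $F_!$ on arrows: given two points of $\CG\nerv{n+1}\fiber{\bs}{F_0}\CH_0$ in the same fiber of $q_n$, the connecting element $h$ exists precisely because every arrow of $F_0^!\CG$ lifts to $\CH$. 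Both halves of the isomorphism $F_!\colon\CH\to F_0^!\CG$ are therefore in play, exactly mirroring the paper's appeal to the fact that elements of $\CH_1$ are uniquely determined by (and realize every choice of) source, target, and image in $\CG$.
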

\begin{proof}
    Recall that by Lemma~\ref{lemma:diagram.lemma.algebras}, we have a commutative diagram:
\[
    \begin{tikzcd}
        \Ch(\CG) \arrow[r, "L"]   & \Ch(\CG)_{{\shift}} \arrow[r, "F^*_{{\shift}}"] & \DC_F \\
        & \Ch(\CG) \arrow[r, "F^*"] \arrow[u, "B"] & \Ch(\CH) \arrow[u, "P_F^*", swap]
    \end{tikzcd}.
\]
It bears recalling the components of this diagram:
\begin{itemize}
\item The maps \( L \) and \( B \) are the standard left and bottom augmentations which map \( \Ch(\CG) \) into the shift double complex \( \Ch(\CG)_{{\shift}} \). We showed in Lemma~\ref{lemma:unit.is.retract.of.shift} that they are quasi-isomorphisms. 
\item The map \( F_{{\shift}}^* \) is the pullback along the smooth double groupoid homomorphism:
\[F_{{\shift}} \colon   \DG_F  \to \CG_{{\shift}}\]
from the \( F \)-double groupoid to the shift double groupoid of \( \CG\).
\item The map \( F^* \) is just the nerve-wise pullback map along \( F \).
\item The arrow \( P_F^* \) is the one which appeared in Lemma~\ref{lemma:diagram.lemma.algebras}. It arises from the natural projections: 
\[ P_F\nerv{m} \colon \CG\nerv{1} \fiber{\bs}{F_0 \circ \bt} \CH\nerv{m} \to \CH\nerv{m} . \]
In Lemma~\ref{lemma:diagram.lemma.algebras}, we proved that \( P_F^* \) is a quasi-isomorphism.
\end{itemize}
To prove the theorem, it suffices to show that \( F^*_{\shift} \circ L  \) is a quasi-isomorphism. This is what we will now focus on demonstrating.

Let us fix \( n \in \mathbb{N} \) and let us consider the \( n \)-th row of \( \DG_F\).
\[ 
\begin{tikzcd}
 \CG\nerv{n+1} \fiber{\bs}{F_0 } \CH\nerv{0} & \arrow[l, shift left] \arrow[l, shift right] \CG\nerv{n+1} \fiber{\bs}{F_0 \circ \bt} \CH\nerv{1} &  \arrow[l, shift left, shift left] \arrow[l, shift right, shift right] \arrow[l] \CG\nerv{n+1} \fiber{\bs}{F_0 \circ \bt}  \CH\nerv{2} & \cdots  \arrow[l, shift left] \arrow[l, shift right]  \arrow[l, shift left, shift left, shift left] \arrow[l, shift right, shift right, shift right] 
\end{tikzcd}
\]
By composing projection to the first component with the first face map \( d_0 \) we get a submersion:

\[ p \colon \CG\nerv{n+1} \fiber{\bs}{F_0 } \CH\nerv{0} \to \CG\nerv{n},  \qquad p(g_{n+1}, \ldots , g_1, x) = (g_{n+1}, \ldots , g_2).  \]
If we unpack the definition, we see that, at level \( n \), the map \( F^*_{\shift} \circ L \) is just the pullback along \( p \):

\[ F^*_{\shift} \circ L = p^* \colon C^\infty(\CG\nerv{n}) \to C^\infty( \CG\nerv{n+1} \fiber{\bs}{F_0} \CH\nerv{0} ). \]
The pullback \( p^* \) defines a left augmentation of the double complex \( \DC_F \).
By Lemma~\ref{lemma:augmentation.lemma}, if we can show this augmentation is acyclic then the proof is finished. We will show it is acyclic by constructing a left retract of each \( n \) column of \( \DC_F \) along \( p^* \).

Now, according to Lemma~\ref{lemma:local.to.global.left.retract}, we only need to construct a left retract locally since we can put them together using a partition of unity. Notice that \( \CG\nerv{n} \) can be covered by open subsets of the form \( U\nerv{n} \) where \( U \subset \CG_0 \). Hence, it suffices to construct a retract over an open subset of this form.

Let \( U \subset \CG\nerv{0} \) be an arbitrary open subset for which the map:
\[ \CG\nerv{1} \fiber{\bs}{F_0} \CH\nerv{0} \to \CG\nerv{0} ,\qquad (g,x) \mapsto \bt(g) \]
admits a section.

Such a section would take the form of a function:
\[ ( \sigma , \overline{f}) \colon U \to \CG\nerv{1} \fiber{\bs}{F_0} \CH_0, \qquad x \mapsto (\sigma(x), \overline{f}(x)),  \]
where \( \sigma \colon U \to U\nerv{1} \) is a section of the target map and \( \overline{f} \colon U \to \CH\nerv{0} \) is a smooth function which makes the following diagram commute:
\[\begin{tikzcd}
    U \arrow[d, "\overline{f}"] \arrow[r, "\sigma"] & \CG\nerv{1} \arrow[d, "\bs"] \\
    \CH\nerv{0} \arrow[r, "F_0"] & \CG\nerv{0} 
\end{tikzcd}\]

Notice that if we restrict the \( n \)th-row of \( \DG_F \) to \( p^{-1}(U\nerv{n}) \) the resulting groupoid is of the form:
\[ 
\begin{tikzcd}
U\nerv{n+1} \fiber{\bs}{F_0 } \CH\nerv{0} & \arrow[l, shift left] \arrow[l, shift right] U\nerv{n+1} \fiber{\bs}{F_0 \circ \bt} \CH\nerv{1} &  \arrow[l, shift left, shift left] \arrow[l, shift right, shift right] \arrow[l] U\nerv{n+1} \fiber{\bs}{F_0 \circ \bt}  \CH\nerv{2} & \cdots  \arrow[l, shift left] \arrow[l, shift right]  \arrow[l, shift left, shift left, shift left] \arrow[l, shift right, shift right, shift right] 
\end{tikzcd}
\]
We will now construct a left retract \( \{ \rho^m \}_{m \in \mathbb{N}} \) for this groupoid. The idea behind the construction is, essentially, to ``lift'' the target family associated to \( \sigma \) to the rows of \( \DG_F \).

Given \(m \ge 1\), let \( \rho^m \) be defined as follows:
\[ \rho^m \colon U\nerv{n+1} \fiber{\bs}{F_0 \circ \bt} \CH\nerv{m-1} \to U\nerv{n+1} \fiber{\bs}{F_0 \circ \bt} \CH\nerv{m}, \]
\[ \rho^m(g_{n+1},\ldots , g_1, h_m , \ldots , h_1) = (g_{n+1},\ldots , g_1, h_m,\ldots, h_1, \overline{h} ) \]
where \( \overline h \in \CH\nerv{1} \) is the unique element which satisfies:
\[ F(\overline{h}) = (g_{n+1} \cdots g_1 \cdot F(h_m) \cdots F(h_1))^{-1} r^0(t(g_{n+1}))) \]
and
\[ \bt(h) = \bs(h_1) \qquad \bs(\overline{h}) = \overline{f}(\bt(g_{n+1})) \]
This map is well defined since \( F \) is a weak equivalence (elements in \( \CH\nerv{1} \) are uniquely determined by their source, target, and image in \( \CG \)).
For the case \( m = 0  \) we define:
\[ \rho^0 \colon U\nerv{n} \to U\nerv{n+1} \fiber{\bs}{F_0 \circ \bt} \CH\nerv{0} \] 
\[\rho^0(g_n,\ldots, g_1) = (g_n,\ldots, g_1, (g_n \cdots g_1)^{-1}  \sigma(\bt (g_n)), \overline{f}(\bt(g_n))  )\]

 A straightforward computation shows that \( \{ \rho^m \}_{m \in \mathbb{N} } \) constitutes a left retract along \( p \). At the level of groupoid cochains we get a left retract \( \{ (\rho^m)^* \}_{m \in \mathbb{N}}\) of the cosimplicial vector space \( C^\infty(\DG\nerv{n,\bullet} )  \) which implies that the augmented row \( \DC\nerv{n,\bullet} \) is acyclic.  
\end{proof}
\begin{remark}
    Notice that, in the proof of Theorem~\ref{theorem:morita.groupoid.cohomology}, the construction of the retraction \(  \rho \) is done in a way that makes it compatible with the target family induced by \( \sigma \). To be precise, if \( \{ r^n \colon U\nerv{n} \to U\nerv{n+1} \} \) is the target family associated to \( \sigma \) then we have a commutative diagram:
    \[ 
    \begin{tikzcd}
        U\nerv{n+1} \fiber{\bs}{F_0} \CH\nerv{m} \arrow[r, "F_{\shift}"] & U\nerv{n+m+1} \\
        U\nerv{n} \fiber{\bs}{F_0} \CH\nerv{m-1} \arrow[u, "\rho^m"] \arrow[r,  "F_{\shift} "] & U\nerv{n+m} \arrow[u,"r^{n+m}"]
    \end{tikzcd}
    \]
    The existence of this lift is a consequence of the assumption that \( F \) is a weak equivalence.
\end{remark}
\subsection{Morita Invariance for sheaves on the big site}
\begin{definition}
    A \emph{big site sheaf of modules} \( \CE \) is a sheaf on the site of smooth manifolds with values in module comorphisms. If \( \CE \) is a big site sheaf of modules and \( \CG \) is a Lie groupoid, then we get a cosimplicial module \( \CE(\CG) \) by applying \( \CE \) to the entire nerve of \( \CG \).
\end{definition}
Note that any cosimplicial module arising from a sheaf on the big site will be automatically a good cosimplicial module.
\begin{example}
    If we fix \( k \in \mathbb{N} \). The sheaf of differential \(k\)-forms \( \Omega^k \) is a big site sheaf. If \( \CG \) is a Lie groupoid then \( \Omega^k(\CG) \) will be the cosimplicial vector space of \( k \)-forms on \( \CG \). If \( k = 0 \) then this is just the usual cosimplicial module of functions on \( \CG \).
\end{example}
\begin{theorem}
    Let \( \CG \) and \( \CH \) be Lie groupoids and let \( \CE \) be a big site sheaf of modules. Let \( \CE_\CG \) and \( \CE_\CH \) be the associated cosimplicial sheaves of modules. If \( F \colon \CH \to \CG \) is a weak equivalence then \( \CE_F \colon \CE_\CH \to \CE_\CG \) is a quasi-isomorphism.
\end{theorem}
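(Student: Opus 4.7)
The plan is to adapt the argument of Theorem~\ref{theorem:morita.groupoid.cohomology} verbatim, replacing $C^\infty$ throughout by the big site sheaf $\CE$. First I would construct the analog of the diagram from Lemma~\ref{lemma:diagram.lemma.algebras}: set
\[ \DC_F(\CE)\nerv{n,m} := \CE\!\left(\CG\nerv{n+1} \fiber{\bs}{F_0 \circ \bt} \CH\nerv{m}\right), \]
equip it with horizontal and vertical differentials coming from the comorphisms $\CE$ assigns to the face maps of $\DG_F$, and consider the corresponding maps $L$, $B$, $F^*_{\shift}$, $P_F^*$ at the level of $\CE$-valued cochains. Because $\CE$ is a sheaf on the big site, every smooth map yields a comorphism, so all three arrows in the target diagram are defined and the squares commute for exactly the same reasons as in the scalar case.

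Next I would verify that $L$, $B$, and $P_F^*$ are quasi-isomorphisms by recycling the retraction constructions from Lemma~\ref{lemma:unit.is.retract.of.shift} and Lemma~\ref{lemma:diagram.lemma.algebras}. Those retractions are built from codegeneracies and from the insertion-of-unit maps $r^n$ at the level of the nerves; applying the big site sheaf $\CE$ to each $r^n$ produces comorphisms that still satisfy the algebraic identities of a left retract, so Lemma~\ref{lemma:trivial.simplicial.cohomology} applies unchanged. With these three arrows pinned down as quasi-isomorphisms, the theorem reduces (just as in the base case) to showing that the composite $F^*_{\shift} \circ L$ is a quasi-isomorphism, equivalently that the left augmentation of the $n$th column of $\DC_F(\CE)$ along the pullback by $p(g_{n+1},\ldots, g_1, x) = (g_{n+1},\ldots, g_2)$ is acyclic for every $n$.

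For this acyclicity I would construct a left retract locally. Over an open set $U \subset \CG_0$ small enough to admit a section $\sigma \colon U \to U\nerv{1}$ of the target and a lift $\overline{f} \colon U \to \CH_0$ with $F_0 \circ \overline{f} = \bs \circ \sigma$, the maps $\rho^m$ defined in the proof of Theorem~\ref{theorem:morita.groupoid.cohomology} still make sense and still form a left retract of the $n$th row of the restricted $\DG_F$ along $p$. Because $\CE$ is functorial on the big site, each $\rho^m$ induces a comorphism $(\rho^m)^\# \colon \CE(\DG_F\nerv{n,m}|_{U}) \coto \CE(\DG_F\nerv{n,m-1}|_{U})$; the simplicial identities satisfied by $\rho^m$ transport to their $\CE$-images, giving a local left retract of cosimplicial modules. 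Lemma~\ref{lemma:local.to.global.left.retract} then glues these local retractions into a global left retract of the cosimplicial vector space $\CE(\DG_F\nerv{n,\bullet})$ along $p^*$, proving acyclicity of the augmentation.

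The only genuinely new input beyond the base case is that the $\rho^m$'s from the scalar proof use the weak equivalence of $F$ to determine $\overline{h} \in \CH_1$ from its source, target, and image in $\CG$; since this input takes place entirely at the level of manifolds, applying $\CE$ afterwards is automatic. I expect the main technical nuisance to be bookkeeping: making sure the partition of unity used in Lemma~\ref{lemma:local.to.global.left.retract} respects the $C^\infty$-module structure on each $\CE(\DG_F\nerv{n,m})$ and commutes appropriately with the coface maps (which it does because the partition is pulled back from $\CG_0$ and the coface maps of $\DG_F$ are $\CG$-covariant). Once that compatibility is stated carefully the argument goes through mechanically.
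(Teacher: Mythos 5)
Your proposal is correct and follows essentially the same route as the paper: the paper's proof likewise applies the functor $\CE$ to the diagram of simplicial structures from Lemma~\ref{lemma:diagram.lemma.algebras}, transports the retractions of $L$, $B$, $P_F$ and the local retractions of $F_{\shift}\circ L$ through $\CE$ (since they are all built from smooth maps of manifolds), and invokes Lemma~\ref{lemma:local.to.global.left.retract} to globalize. Your additional remarks on the compatibility of the partition of unity with the coface maps are exactly the content already packaged into that lemma, so nothing further is needed.
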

\begin{proof}
The proof is essentially obtained by applying the functor \( \CE \) to the constructions in the proof of Theorem~\ref{theorem:morita.groupoid.cohomology}.

Recall that in the proof of Theorem~\ref{theorem:morita.groupoid.cohomology} we considered the diagram:
    \[
    \begin{tikzcd}
    \CG  & \arrow[l, "L", swap]    \CG_{{\shift}} \arrow[d, "B", swap]  & \arrow[l, "F_{{\shift}}", swap] \arrow[d, "P_F"]  \DG_F\\
       &   \CG   & \arrow[l, "F", swap] \CH 
    \end{tikzcd}
    \]
and observed/proved that \( L \), \( B \) and \( P_F \) all admit retractions. Furthermore, we showed that \( L \circ F_{\shift} \) admits retractions locally. 

Now we can apply the functor \( \CE \) to the above diagram to obtain a new diagram:

\begin{equation*}
    \begin{tikzcd}
        \CE_\CG \arrow[r, "{\CE_{L}}"]   & \CE_{\CG_{\shift}} \arrow[r, "\CE_{F_{{\shift}}}"]  & \CE_{\DG} \\
        & \CE_\CG \arrow[r, "\CE_{F} "] \arrow[u, "\CE_{B}"] & \CE_\CH \arrow[u, "\CE_{P_F}", swap]
    \end{tikzcd}.
\end{equation*}
Furthermore, we can apply the functor to the constructions of the left retracts for the original diagram to obtain left retracts for \( \CE_{L} \), \( \CE_{B} \) and \( \CE_{P_F} \). We also obtain local left retracts for \( \CE_{F_{\shift}}\CE_{L}\). In Lemma \ref{lemma:local.to.global.left.retract} we observed that existence of local retracts suffices to conclude that \( \CE_{F_{\shift}} \CE_L \) is acyclic.

Therefore, all of the augmentations appearing in the above diagram are acyclic. Hence, they are quasi-isomorphisms. It follows that \( \CE_{F_{\shift}} \) is a quasi-isomorphism and therefore \( \CE_{F} \) is a quasi-isomorphism.
\end{proof}
\begin{corollary}
    Let \( k \) be a positive integer and \( \CG \), \( \CH \) be Lie groupoids. If \(F \colon \CH \to \CG \) is a weak equivalence then \( F^* \colon \Omega^k(\CG) \to \Omega^k(\CH) \) is a quasi-isomorphism of cosimplicial modules.
\end{corollary}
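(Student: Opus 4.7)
The plan is to recognize this corollary as an immediate specialization of the theorem proved just above, taking the big site sheaf $\CE$ to be the sheaf $\Omega^k$ of differential $k$-forms. First I would verify that $\Omega^k$ genuinely fits into the framework of big site sheaves of modules: for each smooth manifold $M$, $\Omega^k_M$ is a sheaf of $C^\infty_M$-modules, and for each smooth map $f \colon M \to N$ the usual pullback of forms is a comorphism $(f^*, f) \colon \Omega^k_N \coto \Omega^k_M$ (a fact already recorded in the example preceding Definition of cosimplicial modules, and reiterated in the example immediately before the corollary's statement). These assemble into a sheaf on the site of smooth manifolds with values in module comorphisms, which is the definition of a big site sheaf of modules.

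Next I would unwind the notation of the theorem applied to $\CE := \Omega^k$. The associated cosimplicial module $\CE(\CG)$ has $n$-th level $\Omega^k(\CG\nerv{n})$, with coface comorphisms given by pullback along the face maps of the nerve of $\CG$, and likewise for $\CE(\CH)$. The morphism $\CE_F$ produced by functoriality is, at level $n$, nothing but the pullback $(F\nerv{n})^* \colon \Omega^k(\CG\nerv{n}) \to \Omega^k(\CH\nerv{n})$ along the smooth map $F\nerv{n} \colon \CH\nerv{n} \to \CG\nerv{n}$ induced by $F$ on the nerves. Assembled over all $n$, this is exactly the map $F^* \colon \Omega^k(\CG) \to \Omega^k(\CH)$ appearing in the statement.

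With these identifications in hand, the hypothesis that $F$ is a weak equivalence lets us invoke the theorem directly to conclude that $F^*$ is a quasi-isomorphism. There is no real obstacle here: the proof is pure bookkeeping translating between the abstract language of big site sheaves of modules and the concrete language of differential forms on the nerve, and all of the substantive content, namely the construction of the shift-double, the diagram from Lemma~\ref{lemma:diagram.lemma.algebras}, and the local-to-global argument via partitions of unity from Lemma~\ref{lemma:local.to.global.left.retract}, has already been carried out in the proof of the preceding theorem.
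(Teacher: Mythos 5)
Your proposal is correct and matches the paper's intended argument exactly: the corollary is an immediate specialization of the preceding big-site theorem to $\CE = \Omega^k$, and the paper's own example just before the theorem already records that $\Omega^k$ is a big site sheaf of modules. The unwinding of $\CE_F$ into the nerve-wise pullback $F^*$ is the only bookkeeping required, and you have done it correctly.
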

\subsection{Morita invariance for pullbacks}
   In this section, let us fix a groupoid homomorphism \( F \colon \CH \to \CG \). Suppose we are given a cosimplicial module \( \CE \) on \( \CG \). Notice that there is a natural morphism of cosimplicial vector spaces 
\[ F^\# :=\Id \otimes 1 \colon \CE \to  \CE \otimes_\CA \CB , \qquad e \mapsto e \otimes 1  .\] 
Hence, one obtains a morphism of the associated chain complexes. Our main theorem says that this morphism is a quasi-isomorphism when \( F\) is a weak equivalence and when \( \CE \) is good.
\begin{theorem}\label{thm:weakequivalencecosimplicial}
    Let \( F \colon \CH \to \CG \) be a weak equivalence of groupoids and suppose that \( \CE \) is a good cosimplicial module on \( \CG \). Then the pullback 
    \[F^\#  \colon \CE \to F^* \CE  \] 
    defines an isomorphism in cohomology.
\end{theorem}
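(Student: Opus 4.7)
The plan is to mimic the proof strategy of Theorem~\ref{theorem:morita.groupoid.cohomology}, replacing the cosimplicial algebra of functions by the cosimplicial module \(\CE\) throughout, and using the goodness hypothesis precisely when we need to lift a target-family retraction from the level of manifolds to the level of \(\CE\). First I would construct the \(\CE\)-valued analogue of the \(F\)-double complex: for each \((n,m)\), let \(\DC_F(\CE)\nerv{n,m}\) be the global sections of the pullback of \(\CE\nerv{n+m+1}\) along \(F_{\shift}\nerv{n,m}\colon \CG\nerv{n+1}\fiber{\bs}{F_0\circ \bt}\CH\nerv{m}\to \CG\nerv{n+m+1}\), with horizontal and vertical differentials induced from the coface comorphisms of \(\CE\). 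Applying \(\CE\) to the diagram of Lemma~\ref{lemma:diagram.lemma.algebras} yields
\[
\begin{tikzcd}
\Ch(\CE) \arrow[r,"L^\#"] & \Ch(\CE)_{\shift} \arrow[r,"F_{\shift}^\#"] & \DC_F(\CE) \\
& \Ch(\CE) \arrow[r,"F^\#"] \arrow[u,"B^\#"] & \Ch(F^*\CE) \arrow[u,"P_F^\#"']
\end{tikzcd}
\]
so by two-out-of-three it suffices to show that \(L^\#\), \(B^\#\), \(P_F^\#\), and the composite \(F_{\shift}^\# L^\#\) are all quasi-isomorphisms.

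For \(L^\#\), \(B^\#\), and \(P_F^\#\), I would revisit the proofs of Lemma~\ref{lemma:unit.is.retract.of.shift} and Lemma~\ref{lemma:diagram.lemma.algebras} and note that each retraction used there was constructed from degeneracy maps, i.e.\ from the target family associated to the identity section \(\bu\colon \CG_0\to \CG_1\). Goodness (Definition~\ref{definition:cosimplicial.module}) then supplies comorphisms \((r^n)^\#\) covering these target families and satisfying exactly the identities that promote a set-level left retract to a module-comorphism left retract. Hence the corresponding left/bottom augmentations of the \(\CE\)-valued double complexes admit left retractions, and Lemma~\ref{lemma:augmentation.lemma} yields the three quasi-isomorphisms.

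For the composite \(F_{\shift}^\# L^\#\) I would use the remark following the proof of Theorem~\ref{theorem:morita.groupoid.cohomology}: the local retraction \(\rho^m\) of the \(n\)-th row of \(\DG_F\) was constructed so as to cover the target family \(r^{n+m}\) on \(\CG\) associated to a local section \(\sigma\colon U\to \CG\nerv{1}\). Goodness produces module comorphisms \((r^{n+m})^\#\) and, using the compatibility square \(F_{\shift}\circ \rho^m = r^{n+m}\circ F_{\shift}\), one may pull them back through the coface comorphisms of \(\CE\) to obtain a local left retract of \(\DC_F(\CE)\nerv{n,\bullet}\) along the row-augmentation \(F_{\shift}^\# L^\#\) over any open \(U\subset \CG\nerv{0}\) for which the smooth essential surjection \(\CG\nerv{1}\fiber{\bs}{F_0}\CH_0\to \CG_0\) admits a section. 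Such \(U\) cover \(\CG_0\) because \(F\) is a weak equivalence, so Lemma~\ref{lemma:local.to.global.left.retract} glues the local retractions (via a partition of unity) into a global left retract of the cosimplicial vector space of global sections. This makes the augmentation acyclic, and Lemma~\ref{lemma:augmentation.lemma} closes the argument, proving that \(F^\#\) is a quasi-isomorphism.

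The main obstacle I anticipate is not the overall structure of the proof but the bookkeeping in the last step: one must check that the goodness-supplied lifts \((r^{n+m})^\#\) interact correctly with the coface comorphisms of the cosimplicial module \(\CE\) so that the set-theoretic identities making \(\rho^m\) a left retract remain valid after applying \(\CE\), and that the ``extension maps'' \(\CE_a\to \CE\) implicit in the partition-of-unity construction of Lemma~\ref{lemma:local.to.global.left.retract} commute with the coface comorphisms used to define the differentials. Once these verifications are written out, the weak-equivalence hypothesis appears only through the existence of the section \((\sigma,\overline{f})\), and goodness is the single property of \(\CE\) that transports every earlier step of the proof of Theorem~\ref{theorem:morita.groupoid.cohomology} to the present setting.
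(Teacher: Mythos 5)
Your proposal follows essentially the same route as the paper: the same commutative diagram factoring \(F^\#\) through the shift double and \(F\)-double structures, the same reduction to showing that the four augmentations admit left retracts and are hence quasi-isomorphisms, and the same use of goodness together with Lemma~\ref{lemma:local.to.global.left.retract} to build the retract for the composite \(F^\#_{\shift}\circ L\) locally from a section \((\sigma,\overline{f})\) and glue with a partition of unity. One minor imprecision: for \(L^\#\), \(B^\#\) and the right-hand vertical map the paper does not need goodness, since the required retracts come from the codegeneracy comorphisms that are part of the data of every cosimplicial module (and the degeneracy maps are not the same as the target family of the identity section, which appends \((g_n\cdots g_1)^{-1}\) rather than inserting a unit) --- but invoking the stronger hypothesis there does no harm.
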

\begin{proof}
Recall that by Lemma~\ref{lemma:diagram.lemma.algebras}, that we have a commutative diagram:
    \[
    \begin{tikzcd}
    \CG  & \arrow[l, "L", swap]    \CG_{{\shift}} \arrow[d, "B", swap]  & \arrow[l, "F_{{\shift}}", swap] \arrow[d, "P_F"]  \DG_F\\
       &   \CG   & \arrow[l, "F", swap] \CH 
    \end{tikzcd}
    \]
    where \( L \), \( B \) and \( P_F \) are all augmentations that admit left retractions.

    At the level of groupoid cochains, we have a diagram:
    \begin{equation}\label{diagram:pullback.theorem.2}
    \begin{tikzcd}
        \Ch(\CG) \arrow[r, "L"]   & \Ch(\CG)_{{\shift}} \arrow[r, "F^*_{{\shift}}"] & \DC_F \\
        & \Ch(\CG) \arrow[r, "F^*"] \arrow[u, "B"] & \Ch(\CH) \arrow[u, "P_F^*", swap]
    \end{tikzcd}
\end{equation}
    where \( L\), \( B \) and \( P_F^* \) are quasi-isomorphisms.
By introducing a cosimplicial module \( \CE \) on \( \CG \) we can modify Diagram~\ref{diagram:pullback.theorem.2} to get a new diagram:
    \[
    \begin{tikzcd}
        \CE \arrow[r, "L_\CE"]   & \CE_{{\shift}} \arrow[r, "F_{{\shift}}^\#"] & F_\text{shift}^* \CE_{{\shift}} \\
        & \CE \arrow[r, "F^\#"] \arrow[u, "B_\CE"] &  F^* \CE \arrow[u, "B_\CE {\otimes} P_F^*", swap]
    \end{tikzcd}.
\]

\begin{itemize}
\item As before the maps \( L_\CE \) and \( B_\CE \) are the left and bottom inclusions of \( \CE \) into its associated shift double structure \( \CE_{{\shift}} \).
\item The map \( F_{{\shift}}^\# = \Id_{\CE_\text{shift}} \otimes 1  \) is the ``pullback comorphisms'' obtained from pulling back the modules \( \CE_{{\shift}} \) on the shift double groupoid of \( \CG \) to the \( F\)-double groupoid via \( F_{\shift} \). 
\item The map \( F^\#  = \Id_\CE \otimes 1 \) is the pullback morphism along \( F \colon \CH \to \CG \).
\item The vertical arrow on the right side is obtained by taking the tensor product of the bottom inclusion for \( \CE \) together with the pullback inclusion \( P_F^* \colon \CB \to \DC_F \). We leave it to the reader to verify that this is well-defined (i.e. compatible with the tensor products over \( \Ch(\CG) \) and \( \Ch(\CG_\text{shift}) \)).
\end{itemize}
We claim  that if \( \CE \) is good and \( F \) is a weak equivalence then \( F^\# \) is a quasi-isomorphism.

We have already seen that the bottom and left inclusions into the shift double complex are quasi-isomorphisms. If we can show \( F^\#_{{\shift}} \circ L_\CE \) is a quasi-isomorphism and \( B_\CE \otimes P_F^* \) is a quasi-isomorphism then we are finished. We will prove these facts in two separate lemmas.
\end{proof}
\begin{lemma}\label{lemma:BPadmitsleftretract}
The map \( B_L \otimes P_F^* \) admits a left retract and is hence a quasi-isomorphism.
\end{lemma}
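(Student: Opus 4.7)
The plan is to reduce to the Augmentation Lemma (Lemma~\ref{lemma:augmentation.lemma}) by constructing, for each fixed column index $n$, a left retract of the cosimplicial vector space
\[ \CE\nerv{n+k+1} \otimes_{C^\infty(\CG\nerv{n+k+1})} C^\infty(\DG_F\nerv{k,n}) \qquad (k \ge 0), \]
augmented below by $(F^*\CE)\nerv{n}$ via $B_\CE \otimes P_F^*$, which sends $e \otimes f$ to $\phi^{n+1}_{n+1}(e) \otimes P_F^*(f)$. By Lemma~\ref{lemma:trivial.simplicial.cohomology}, the existence of such a retract forces each column to be acyclic, which together with the Augmentation Lemma then gives the claim.

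The key observation is that the left retract $\{r^k\}$ of $P_F$ built during the proof of Lemma~\ref{lemma:diagram.lemma.algebras} inserts the unit $\bu \circ \bs(g_1)$ immediately to the right of $g_1$. When pushed through $F_{\shift}$, this insertion becomes the standard codegeneracy $s^{k+n}_n$ on the nerve of $\CG$: a direct check using compatibility of $F$ with source and target yields
\[ F_{\shift}\nerv{k,n} \circ r^k = s^{k+n}_n \circ F_{\shift}\nerv{k-1,n} \quad (k \ge 1), \qquad F_{\shift}\nerv{0,n} \circ r^0 = s^n_n \circ F_n . \]
This identification lets me lift the retract to the module level using the codegeneracy $\sigma^{k+n}_n \colon \CE\nerv{k+n+1} \coto \CE\nerv{k+n}$, which is part of the cosimplicial structure of $\CE$ (so no ``goodness'' hypothesis is needed for this lemma). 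Concretely, I set
\[ \theta^0(e \otimes f) := \sigma^n_n(e) \otimes (r^0)^* f, \qquad \theta^k(e \otimes f) := \sigma^{k+n}_n(e) \otimes (r^k)^* f \quad (k \ge 1), \]
and well-definedness on the tensor product follows exactly from the compatibility just recorded.

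Verifying the three left-retract axioms then reduces to combining two ingredients: the cosimplicial identities (CS3) and (CS4) for $\CE$, which give $\sigma^{k+n}_n \phi^{k+n+1}_{n+1} = \Id$ and the face-shifting relation $\sigma^{k+n+1}_n \phi^{k+n+2}_{n+1+i} = \phi^{k+n+1}_{n+i} \sigma^{k+n}_n$ for $i \ge 1$; together with the known retract identities $d^k_0 \circ r^k = \Id$ and $d^{k+1}_i \circ r^{k+1} = r^k \circ d^k_{i-1}$ for $1 \le i \le k+1$ established in Lemma~\ref{lemma:diagram.lemma.algebras}. The main obstacle is really just the bookkeeping required to identify the column cofaces of $F_{\shift}^* \CE_{\shift}$ explicitly: the $n$-th column of $\DG_F$ is the nerve of an action groupoid, so its $0$-th column face map corresponds to composition (i.e.\ $d^{k+1}_1$ of $\CG$ on the $\CG$-part), not to source-drop, and the cofaces of $\CE_{\shift}$ in column $m$ are $\phi^{k+m+1}_{m+1+i}$ of $\CE$. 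Once this matching is in place, axiom 2 follows from CS3 and $d^k_0 \circ r^k = \Id$; axiom 3 follows from CS4 and $d^{k+1}_i \circ r^{k+1} = r^k \circ d^k_{i-1}$; and axiom 1 combines CS4 with $d^1_1 \circ r^1 = r^0 \circ P_F$, completing the construction of the retract and hence the lemma.
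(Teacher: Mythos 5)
Your proposal is correct and follows essentially the same route as the paper: both use the left retract \( \{ r^k \} \) of \( P_F \) from Lemma~\ref{lemma:diagram.lemma.algebras}, observe that under \( F_{\shift} \) it becomes the standard degeneracy retract of the corresponding column of \( \CG_{\shift} \), and pair it with the codegeneracies \( \sigma^{k+n}_n \) of \( \CE \) to get a well-defined left retract \( \sigma^{k+n}_n \otimes (r^k)^* \) of the tensor-product column along \( B_\CE \otimes P_F^* \). Your explicit compatibility identity \( F_{\shift} \circ r^k = s^{k+n}_n \circ F_{\shift} \) and the remark that no goodness hypothesis is needed here are both consistent with, and slightly more detailed than, the paper's argument.
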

\begin{proof}
Let us fix \( m \in \CH\nerv{m} \) and consider the corresponding column of \( \CE_{{\shift}} \otimes \DC_F \) as illustrated below:
\[
\begin{tikzcd}[arrows=<-]
    \vdots \fourdarrow \\
    \CE\nerv{m+3} \underset{\Ch(\CG)\nerv{m+3}}\otimes \DC_F\nerv{2,m} \threedarrow \\
    \CE\nerv{m+2} \underset{\Ch(\CG)\nerv{m+2}}\otimes \DC_F\nerv{1,m}  \twodarrow\\
    \CE\nerv{m+1} \underset{\Ch(\CG)\nerv{m+1}}\otimes \DC_F\nerv{0,m}  .  
\end{tikzcd}
\]
In Lemma~\ref{lemma:diagram.lemma.algebras}, we constructed a left retract (of groupoids):
\[  \{r^n   \}_{\mathbb{N}}  \] 
of the \( m \)-column of \( \DG_F \) to \( \CH\nerv{m} \) along \( P_F \). Notice that if we apply \( F_{\shift} \) to this left retract of groupoids we obtain the ``standard'' left retract of the \( m \)-column of \( \CG_{\shift} \) to \( \CG\nerv{m} \) along \( d^{m+1}_{m+1}\) (see Example~\ref{example:standard.left.retract}):
\[  \{ s^{n+m}_{m} \colon \CG\nerv{n+m} \to \CG\nerv{n+m+1}  \}_{n \in \mathbb{N}}  . \]

In Lemma~\ref{lemma:unit.is.retract.of.shift} we also constructed the corresponding standard left retract for cosimplicial vector spaces:
\[ \{ \sigma^{n+m}_m \colon \CE\nerv{m+n} \to \CE\nerv{m+n+1} \}_{n \in \mathbb{N}}  \] 
of the \( m \)-column of \( \CE_{\shift} \) to \( \CE\nerv{m} \) along \( \phi^{m+1}_{m+1} \) .
The standard left retract of groupoids is compatible with this left retract of modules in the sense that it is the ``base map'' for this left retract of modules. This compatibility allows us to conclude that the following is well defined (meaning that linear maps below are compatible with the relevant tensor product structure):
\[  \{ \theta^n \otimes (r^n)^* \}_{n \in \mathbb{N}} \]
Each component of the tensor product is already known to constitute a left retract. The \( \theta \) component is a left retract along \( \phi^m_m \) while the \( (r^n)^* \) component is a left retract along \( P_F \). Hence, the tensor product will also satisfy the left retract relations and will constitute a left retract along the bottom augmentation \( B_\CE \otimes P_F^*\). Since \( B_\CE \otimes P_F^* \) is injective it must be a quasi-isomorphism.
\end{proof}
\begin{lemma}\label{lem:FLadmitsleftretract}
    The map \(F^\#_{\shift} \circ L_\CE \) admits a left retract and is hence a quasi-isomorphism.
\end{lemma}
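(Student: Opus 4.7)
The plan is to adapt the strategy of Theorem~\ref{theorem:morita.groupoid.cohomology} to the level of cosimplicial modules, now leveraging the goodness hypothesis on \( \CE \). Fix a row index \( n \in \NN \) and examine the \( n \)-th augmented row of \( F^*_{\shift} \CE_{\shift} \), whose augmentation is the component of \( F^\#_{\shift} \circ L_\CE \) at degree \( n \). By Lemma~\ref{lemma:augmentation.lemma} it suffices to show this augmentation is acyclic for each \( n \), and by Lemma~\ref{lemma:local.to.global.left.retract} it suffices to construct the retract locally, over open subsets of the form \( U\nerv{n+1} \fiber{\bs}{F_0 \circ \bt} \CH\nerv{m} \), where \( U \subset \CG_0 \) admits both a section \( \sigma \colon U \to U\nerv{1} \) of the target map and a smooth lift \( \overline{f} \colon U \to \CH_0 \) of \( \bs \circ \sigma \) through \( F_0 \); such \( U \)'s cover \( \CG_0 \) precisely because \( F \) is a weak equivalence.

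Over such a \( U \), the proof of Theorem~\ref{theorem:morita.groupoid.cohomology} already supplies a family \( \{\rho^m\}_{m \in \NN} \) forming a left retract of the restricted row of \( \DG_F \) along the relevant submersion. The remark immediately following that proof records the key compatibility \( F_{\shift} \circ \rho^m = r^{n+m} \circ F_{\shift} \), where \( \{r^k\} \) is the target family determined by \( \sigma \). Goodness of \( \CE \) (Definition~\ref{definition:cosimplicial.module}) then provides comorphisms \( \{((r^k)^\#, r^k)\}_{k \in \NN} \) lifting this target family and satisfying the target-family relations for the coface maps of \( \CE \).

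I would then assemble a local left retract at the module level by tensoring: at bidegree \( (n,m) \), define the retract to be the comorphism whose underlying map on elementary tensors sends \( e \otimes f \) to \( (r^{n+m})^\#(e) \otimes (\rho^m)^*(f) \). The commutative square \( F_{\shift} \circ \rho^m = r^{n+m} \circ F_{\shift} \) is exactly what makes this well-defined with respect to the tensor-product balancing over \( C^\infty(\CG\nerv{n+m+1}) \). The left-retract axioms then decouple into factor-wise statements: the \( (r^{n+m})^\# \) factor satisfies them by goodness, while the \( (\rho^m)^* \) factor satisfies them because \( \rho^m \) is a left retract at the level of simplicial manifolds.

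The main obstacle I anticipate is the bookkeeping: verifying that the combined tensor-factor relations yield precisely the required compatibility with the horizontal coface and augmentation maps of the row, and confirming that the augmentation \( F^\#_{\shift} \circ L_\CE \) itself fits into this picture at level \( m = 0 \) by being compatible with \( \phi^{n+1}_0 \) tensored with \( p^* \). This is a routine unwinding of definitions that invokes the ``shift'' relation from the goodness axiom together with the retract relation for \( \rho^m \). Once the local retracts are in hand, Lemma~\ref{lemma:local.to.global.left.retract} assembles them via a partition of unity into a global left retract of cosimplicial vector spaces, establishing that the augmentation is acyclic and hence that \( F^\#_{\shift} \circ L_\CE \) is a quasi-isomorphism.
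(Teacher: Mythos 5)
Your proposal matches the paper's proof essentially step for step: reduce to a local construction via Lemma~\ref{lemma:local.to.global.left.retract}, reuse the retract \( \{\rho^m\} \) from the proof of Theorem~\ref{theorem:morita.groupoid.cohomology}, invoke goodness to lift the target family to comorphisms \( (r^k)^\# \), and tensor the two factors together using the compatibility \( F_{\shift}\circ\rho^m = r^{n+m}\circ F_{\shift} \) to see that the combined map is well defined and satisfies the retract relations factor-wise. This is correct and is the same argument as in the paper.
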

\begin{proof}
Let us fix \( n \in \mathbb{N} \) and consider the \( n \)-th column of \( \DG_F\),
\[ 
\begin{tikzcd}
 \CG\nerv{n+1} \fiber{\bs}{F_0 } \CH\nerv{0} & \arrow[l, shift left] \arrow[l, shift right] \CG\nerv{n+1} \fiber{\bs}{F_0 \circ \bt} \CH\nerv{1} &  \arrow[l, shift left, shift left] \arrow[l, shift right, shift right] \arrow[l] \CG\nerv{n+1} \fiber{\bs}{F_0 \circ \bt}  \CH\nerv{2} & \cdots  \arrow[l, shift left] \arrow[l, shift right]  \arrow[l, shift left, shift left, shift left] \arrow[l, shift right, shift right, shift right] 
\end{tikzcd}.
\]
This groupoid has a natural projection 
\[ p \colon \CG\nerv{n+1} \fiber{\bs}{F_0 } \CH\nerv{0} \to \CG\nerv{n} , \qquad p(g_{n+1}, \ldots , g_1, x) = (g_{n+1}, \ldots , g_2).  \]
At the level of modules, the \( n \)-th row of \( F_{\shift}^* \CE_\text{shift} \) is a cosimplicial module over the \( n \)-column of \( \DG_F \). It looks like:
\[ 
\begin{tikzcd}[arrows=<-]
 \CE\nerv{n+1} \underset{\Ch(\CG)\nerv{n+1}}{\otimes} \DC_F\nerv{n,0} & \arrow[l, shift left] \arrow[l, shift right] 
 \CE\nerv{n+2} \underset{\Ch(\CG)\nerv{n+2}}{\otimes} \DC_F\nerv{n,1} &  \arrow[l, shift left, shift left] \arrow[l, shift right, shift right] \arrow[l] 
 \cdots
\end{tikzcd}.
\]
The map \( L :=  F^\#_\text{shift} \circ L_\CE \) corresponds to the map:
\[ \CE\nerv{n} \to  \CE\nerv{n+1} \underset{\Ch(\CG)\nerv{n+1}}{\otimes} \DC_F\nerv{n,0}, \qquad e \mapsto \phi^{n+1}(e) \otimes 1 . \]
The pair 
\[ (L, p) \colon  \CE\nerv{n} \coto F^*_{\shift} \CE  \] 
constitutes a module comorphism. According to Lemma~\ref{lemma:local.to.global.left.retract}, if we can locally construct a left retraction along \( (L,p) \) then it follows that the cohomology of the \( n \)-th row is concentrated in the \( 0 \)-column and is equal to the image of \( L \) in degree zero. This would complete the proof. 

Hence, it suffices to complete this module comorphism to a left retract locally in \( \CG\nerv{n} \). Similar to the proof of Theorem~\ref{theorem:morita.groupoid.cohomology}, let \( U \subset \CG\nerv{0} \) be an open subset which admits a local section:
\[ ( r^0, \overline{f}) \colon U \to \CG\nerv{1} \fiber{\bs}{F_0} \CH_0 , \qquad x \mapsto (r^0(x), \overline{f}(x)).  \]

In the proof of Theorem~\ref{theorem:morita.groupoid.cohomology}, we constructed a left retract \( \{ \rho^m \}_{m \in \mathbb{N}} \) for the restriction of the \( n \)-th row to \( U \nerv{n} \). We just need to promote this left retraction to a module comorphism.

Let \( \{ r^k \colon U\nerv{k} \to U\nerv{k+1} \}_{k\in \mathbb{N}} \) be the associated target family induced by \( r^0 \). Since \( \CE \) is assumed to be good, let 
\[ \{ (r^k)^\# \colon \CE\nerv{k+1}|_{U\nerv{k+1}} \to \CE\nerv{k}|_{U\nerv{k}} \}_{k \in \mathbb{N}} \] 
be a lift of this target family to the level of the module. 

We can put \( \rho^m \) together with the lifts of the target family to obtain a family of module comorphisms parameterized by \( m \in \mathbb{N}\):
\[ \left\{ \left( r^{n+1+m})^\# \otimes (\rho^m)^* , \rho^m \right) \right\}_{m \in \mathbb{N}} \]
\end{proof}

\section{Cosimplicial Complexes over groupoids}
\label{sec:cosimplicial complexes over groupoids}
\subsection{Triple Complexes}
First, let us establish some terminology. Thus far, we have used the words ``vertical" and ``horizontal'' to refer positions/directions relative to the written page. We will now use the word ``normal'' to refer to the direction orthogonal to the page. 

We think of our triple (cochain) complexes as being bounded below by zero in each index. These triple complexes \( E^{\bullet, \bullet, \bullet}  \) are equipped with three commuting differentials: 
\[ d_V \colon E^{\bullet, \bullet, \bullet} \to E^{\bullet+1, \bullet, \bullet} \]
\[ d_H \colon E^{\bullet, \bullet, \bullet} \to E^{\bullet, \bullet+1, \bullet} \]
\[ d_N \colon E^{\bullet, \bullet, \bullet} \to E^{\bullet, \bullet, \bullet+1} \]
These differentials will be called the vertical, horizontal, and normal differentials (respectively).

The most important cohomology (and the one we are most interested in computing) is the analogue of the total cohomology.

\begin{definition}
    Suppose \( \CE \) is a triple complex. The total complex of a triple complex is defined such that the \( k\)th homogeneous component is given by:
    \[ \CE_{tot}^{k} := \sum_{a+b+c =k } \CE^{a,b,c}. \]
    The total differential is defined so that on the subspace \( \CE^{a,b,c}\) we have:
    \[ d_{tot}|_{\CE^{a,b,c}} = d_V + (-1)^a d_H + (-1)^{(a+b)} d_N. \]
\end{definition}
A routine check verifies that \( d_{tot} \) squares to zero.

Triple complexes can be thought of as double complexes \( D\nerv{n,m} \) where each \( D\nerv{n,m} \) is itself a complex rather than a vector space. However, it is not true that the total cohomology of the triple complex can be computed by first computing the total cohomology of \( D\nerv{n,m} \). For this reason we are better off treating the topic in a way that does not have any ``privileged'' directions.

Despite the previous remark, there is still a version of the augmentation lemma for triple complexes. In order to state this properly we should define augmentations in the context of triple complexes.
\begin{definition}
    Suppose \( \CE \) is a triple complex and \( D \) is a double complex. A \emph{left-normal augmentation} (LN-augmentation) 
    \[ LN \colon D \to \CE  \] consists of a morphism of double complexes:
    \[ LN \colon V \to D^{\bullet,0, \bullet} \]
    with the property that \( d_H L = 0 \).
    
    Bottom-normal and left-bottom augmentations are defined similarly (each just corresponds to changing which part of the triple complex is set to zero.)

    We say a left-normal augmentation is \emph{acyclic} if for all \( m,n \in \mathbb{N} \) we have that the following augmented complex is exact:
    \[
    \begin{tikzcd}
        D^{m,n} \arrow[r, "LN"] & E^{m,0,n} \arrow[r, "d_H"] & E^{m,1,n} \arrow[r, "d_H"] & \cdots 
    \end{tikzcd}
    \]
\end{definition}
Let us now state the augmentation lemma for triple complexes.
\begin{lemma}\label{lemma:augmentation.for.triple.complexes}
    Suppose \( E \) is a triple complex and suppose \( LN \colon D \to E \) is an acyclic left-normal augmentation of \( E \). 
    
    Then \( LN \) induces an isomorphism \( L_* \colon  H(D) \to H(E) \).
\end{lemma}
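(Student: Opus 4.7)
The plan is to reduce the triple-complex statement to the already-known double-complex augmentation lemma (Lemma~\ref{lemma:augmentation.lemma}) by first collapsing two of the three directions of \( E \) into a single direction, and then running a mapping-cone argument in the remaining direction. First I would form a double complex \( F \) from \( E \) by grouping the vertical and horizontal directions: set \( F^{k,c} := \bigoplus_{a+b=k} E^{a,b,c} \), with first differential \( d_1|_{E^{a,b,c}} := d_V + (-1)^a d_H \) (the standard "total" differential of the vertical-horizontal plane at each fixed \( c \)) and second differential \( d_2 := d_N \). A routine computation using the commutation of \( d_V, d_H, d_N \) shows \( F \) is a double complex, and an index-chase shows \( \mathrm{Tot}(F) \) with its total differential agrees canonically with \( E_{tot} \) and \( d_{tot} \).

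Next I would check that \( LN \) lifts to a morphism of double complexes \( \widetilde{LN}: D \to F \), sending \( D^{m,n} \) into the \( a = m, b = 0 \) summand of \( F^{m,n} \). Compatibility with the first differential uses precisely the condition \( d_H \circ LN = 0 \) (so that \( d_1 \circ \widetilde{LN} = d_V \circ LN = LN \circ d_V^D \)), while compatibility with the second differential is immediate since \( LN \) already commutes with \( d_N \) as part of being a morphism of double complexes.

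Then I would fix \( c \in \mathbb{N} \) and apply Lemma~\ref{lemma:augmentation.lemma} to the double complex \( E^{\bullet,\bullet,c} \): its total complex is the \( c \)-th column \( F^{\bullet,c} \) of \( F \), and the restriction \( LN|_c : D^{\bullet,c} \to E^{\bullet,0,c} \) is a left augmentation of this double complex. The hypothesis that \( LN \) is an acyclic LN-augmentation is exactly the statement that this left augmentation is acyclic, so Lemma~\ref{lemma:augmentation.lemma} yields that \( \widetilde{LN}|_c : D^{\bullet,c} \to F^{\bullet,c} \) is a quasi-isomorphism of cochain complexes for every \( c \).

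Finally, to promote "quasi-isomorphism on every column" to "quasi-isomorphism of total complexes", I would form the column-wise mapping cone \( C \) of \( \widetilde{LN} \), with \( C^{k,c} := F^{k,c} \oplus D^{k-1,c} \), first differential the usual cone differential in the column direction, and second differential \( d_N \) on both summands. A short check shows \( C \) is a double complex, \( \mathrm{Tot}(C) \) is (a shift of) the mapping cone of \( \mathrm{Tot}(\widetilde{LN}) \), and each column \( C^{\bullet,c} \) is the mapping cone of a quasi-isomorphism by the previous paragraph, hence acyclic. Applying Lemma~\ref{lemma:augmentation.lemma} with the trivial bottom augmentation \( 0 \to C \) (acyclic precisely because every column of \( C \) is exact) forces \( H(\mathrm{Tot}(C)) = 0 \), so \( \mathrm{Tot}(\widetilde{LN}) \), equivalently \( LN \), is a quasi-isomorphism \( D_{tot} \to E_{tot} \). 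The main technical obstacle will be bookkeeping the signs and indices so that \( \mathrm{Tot}(F) \) really coincides with \( E_{tot} \) with its \( (-1)^a \) and \( (-1)^{a+b} \) sign conventions, and so that the mapping-cone differential on \( C \) commutes with \( d_N \); once those are nailed down, the argument proceeds by two applications of the already-established double-complex augmentation lemma.
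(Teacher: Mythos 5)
Your argument is correct, but it reduces to Lemma~\ref{lemma:augmentation.lemma} along a different axis than the paper intends, and this costs you an extra step. The paper's (unwritten) proof collapses the \emph{vertical and normal} directions --- the two directions in which \( D \) itself extends --- into a single grading: setting \( \widetilde{E}^{k,b} := \bigoplus_{a+c=k} E^{a,b,c} \) makes \( E \) a double complex whose remaining rows are the horizontal ones, turns \( D \) into the single complex \( \mathrm{Tot}(D) \), and turns \( LN \) into an honest left augmentation \( \mathrm{Tot}(D) \to \widetilde{E}^{\bullet,0} \); its augmented rows are direct sums of the augmented rows \( D^{m,n} \to E^{m,0,n} \to E^{m,1,n} \to \cdots \), hence exact, so a single application of Lemma~\ref{lemma:augmentation.lemma} finishes. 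You instead collapse the vertical and horizontal directions, after which \( D \) is still a double complex and \( LN \) is not literally an augmentation of the resulting object; this is why you need the additional machinery of a column-wise quasi-isomorphism (one application of Lemma~\ref{lemma:augmentation.lemma} for each normal level \( c \)) followed by the mapping cone \( C \) and a second application of the lemma with the trivial bottom augmentation to pass from ``quasi-isomorphism on every column'' to ``quasi-isomorphism of total complexes.'' Each of these steps checks out --- \( C \) is first-quadrant, its columns are acyclic cones of quasi-isomorphisms, and the finiteness of each total degree avoids any convergence issue --- and your flagged sign bookkeeping is genuinely the only loose end (with the paper's commuting-differential conventions the two bracketings of \( E_{tot} \) differ only by signs that do not affect cohomology). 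What your route buys is an explicit proof of the principle the paper later invokes in Lemma~\ref{lem:tripleaugmentationacyclic}, namely that acyclicity checked level-by-level in the normal direction propagates to the total complex; what the paper's route buys is brevity, since the direct-sum-of-exact-rows observation makes the cone argument unnecessary.
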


We will not include a proof of this lemma here. Instead, we will simply remark that this fact can be proved as a corollary of the two-dimensional augmentation lemma. The idea is to collapse the triple complex by ``combining'' the left-normal direction into a single complex to view \( E \) as a double complex instead.

Of course, by symmetrical arguments, one has corresponding versions of the augmentation lemma for left-bottom and bottom-normal augmentations.

One should also take note of the fact that determining whether an augmentation is acyclic really only depends on the exactness of the one dimensional complexes orthogonal to the augmentation. 

The following fact is rather clear from the definition but it is worth pointing out. Roughly, it says that, to check if an augmentation is acyclic, one only needs to verify this for each ``level'' of the normal direction.
\begin{lemma}\label{lem:tripleaugmentationacyclic}
    Suppose \( E \) is a triple complex and \( LN \colon D \to E \) is a left-normal augmentation. Then \( E \) is acyclic if and only if, for each \( n \in \mathbb{N} \) we have that the associated augmentation of double complexes:
    \[ LN^{\bullet, n} \colon D^{\bullet, n} \to E^{\bullet, \bullet , n } \]
    is acyclic. 
\end{lemma}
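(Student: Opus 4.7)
The plan is to observe that both conditions in the biconditional unpack to exactly the same collection of exact sequences, so the proof is essentially ``swap the quantifiers.'' First I would unfold the left-hand side: by the definition given just before the lemma, $LN$ is acyclic precisely when for every pair $(m,n) \in \mathbb{N}^2$ the augmented horizontal complex
\[
\begin{tikzcd}
D^{m,n} \arrow[r, "LN"] & E^{m,0,n} \arrow[r, "d_H"] & E^{m,1,n} \arrow[r, "d_H"] & \cdots
\end{tikzcd}
\]
is exact. This is a statement indexed jointly by $m$ and $n$.

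Next I would verify that, for each fixed $n \in \mathbb{N}$, the restriction $LN^{\bullet,n} \colon D^{\bullet, n} \to E^{\bullet, \bullet, n}$ is genuinely a left augmentation of the double complex $E^{\bullet, \bullet, n}$ (whose vertical and horizontal differentials are the restrictions of $d_V$ and $d_H$ of the triple complex). Compatibility with $d_V$ is inherited from the fact that $LN$ is a morphism of double complexes in the left-normal plane, and the condition $d_H \circ LN^{\bullet, n} = 0$ is just the restriction of $d_H \circ LN = 0$. With this identification in place, the two-dimensional definition of acyclicity applied to $LN^{\bullet,n}$ states exactly that for every $m \in \mathbb{N}$, the row above is exact.

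Hence requiring the right-hand side condition --- that $LN^{\bullet, n}$ be acyclic for every $n$ --- yields the family of exact sequences indexed by all $(m,n)$, which is the same family appearing in the definition of $LN$ being acyclic. The two conditions therefore coincide, and the equivalence follows.

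I expect no substantive obstacle. The only subtlety is bookkeeping: one must be careful that the ``horizontal'' direction of the restricted double complex $E^{\bullet,\bullet,n}$ lines up with the horizontal direction in which the triple-complex augmentation $LN$ is required to be exact, so that the two notions of acyclicity really do agree sequence-by-sequence. Once that matching is made explicit, the biconditional is immediate.
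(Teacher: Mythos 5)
Your proposal is correct and matches the paper's intent exactly: the paper states this lemma without proof, remarking only that it is ``rather clear from the definition,'' and your argument is precisely the definitional unwinding---both sides assert exactness of the same family of augmented horizontal complexes indexed by $(m,n)$, with the quantifiers merely regrouped. Nothing further is needed.
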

There is nothing particularly special about the normal direction here. Similar facts can be established for various permutations of fixed indices and types of augmentations. However, the above form will turn out to be the one most relevant to our arguments in the next section.

\subsection{Cosimplicial complexes}

\begin{definition}\label{definition:cosimplicial.complex}
    A \emph{cosimplicial complex} over a Lie groupoid \( \CG \) consists of a cosimplicial module \( \CE \) on \( \CG \) that is further equipped with the structure of a cochain complex on each level of the nerve.
    
    Therefore, for each \( n \in \mathbb{N} \) we have a (cochain) complex of the form:
    \[ \CE\nerv{n} = \bigoplus_{k \in \mathbb{N}} (\CE\nerv{n})^{k},\qquad (d_N\nerv{n})^k \colon (\CE\nerv{n})^k \to (\CE\nerv{n})^{k+1}. \]
    Furthermore, we require that this additional structure is compatible with the cosimplicial module structure. In other words, all coface maps are morphisms of complexes.

    A cosimplicial complex \( \CE \) over a Lie groupoid \( \CG \) is said to be \emph{good} if each homogeneous component \( \CE^k \) is \emph{good} as a cosimplicial module over \( \CG \)
\end{definition}
\begin{notation}
    Since complexes include an additional index, we adopt the notation \( \CE^{n,k} := (\CE\nerv{n})^{k} \) to denote the homogeneous components.

    For reasons that will be clear later, we will think of the differential \( d_N \) as corresponding to a direction ``normal'' to the page (hence the \( N \) subscript).
\end{notation}

Our model example is the differential forms of the nerve:
\begin{example}[Bott-Schulman Double Complex]
    Given a Lie groupoid \( \CG \), write \( \Omega(\CG) \) to denote the cosimplicial module of differential forms in all degrees. So:
    \[\Omega(\CG)^{n,i} := \Omega^i( \CG\nerv{n} ). \]
    The differential \( D \) is the standard de Rham differential.
\end{example}

Observe that if the compatibility of the face maps with \( d_N \) implies that \( d_N \) commutes with the simplicial differential. Hence, \( \CE \) actually has the structure of a double complex.

\begin{definition}
    Suppose \( \CE \) is a cosimplicial complex over \( \CG \). The cohomology of \( \CE \), written \( H(\CE) \) is defined to be the \emph{total} cohomology of \( \CE \) regarded as a double complex.
\end{definition}

Observe that the shifted complex \( \CE_{\shift} \) of a cosimplicial complex over \( \CG \) is no longer just a double complex but a triple complex since we have one more differential \( d_N \) which operates in the normal direction.

\subsection{Morita invariance for pullbacks}
We will now prove the analogue of Theorem~\ref{thm:weakequivalencecosimplicial} except in the context of cosimplicial complexes rather than modules.

Before proceeding with the statement, we remark that cosimplicial complexes can be pulled back along groupoid homomorphisms. The definition is the same as the definition for cosimplicial modules:

Given \( \CE \) a cosimplicial complex on \( \CG \) and a groupoid homomorphism \( F \colon \CH \to \CG \) then:
\[ (F^* \CE)^{m,n} := \CE\nerv{m,n} \otimes_{C^\infty(\CG\nerv{m})} C^\infty(\CH\nerv{m}).  \]
\begin{theorem}\label{theorem:morita.invariance.for.pullback.complexes}
    Suppose \( \CE \) is a good cosimplicial complex over \( \CG \) and \( F \colon \CH \to \CG \) is a weak equivalence of Lie groupoids. Then the natural map:
    \[ F^\# \colon \CE \to F^* \CE ,\qquad e \mapsto e \otimes 1 \]
    is an isomorphism in cohomology.
\end{theorem}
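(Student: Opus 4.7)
The plan is to adapt the proof of Theorem~\ref{thm:weakequivalencecosimplicial} to the triple complex setting. Since a cosimplicial complex $\CE$ carries an extra normal differential $d_N$, the shift double $\CE_{\shift}$ becomes a triple complex rather than a double complex, and the pullback $F^*\CE$ likewise acquires a triple complex structure via $F^*_{\shift}$. First I would reproduce the commutative diagram from Lemma~\ref{lemma:diagram.lemma.algebras}, but now applied to the cosimplicial complex $\CE$:
\[
\begin{tikzcd}
\CE \arrow[r, "L_\CE"] & \CE_{\shift} \arrow[r, "F^\#_{\shift}"] & F^*_{\shift} \CE_{\shift} \\
 & \CE \arrow[u, "B_\CE"] \arrow[r, "F^\#"] & F^* \CE \arrow[u, "B_\CE \otimes P_F^*"', swap]
\end{tikzcd}
\]
Each object here is now a double or triple complex (with the normal direction contributed by $d_N$), and the arrows are morphisms respecting all of the differentials because the cosimplicial structure maps commute with $d_N$ by definition of a cosimplicial complex.

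Next I would reduce each quasi-isomorphism claim from the triple complex to the double complex setting using Lemma~\ref{lem:tripleaugmentationacyclic}. Concretely, since $\CE$ being good as a cosimplicial complex means that each homogeneous slice $\CE^{\bullet,k}$ is a good cosimplicial module, fixing the normal degree $k$ reduces us exactly to the double complex situation from Theorem~\ref{thm:weakequivalencecosimplicial}. By Lemma~\ref{lemma:unit.is.retract.of.shift} applied slice-by-slice in $k$, the augmentations $L_\CE$ and $B_\CE$ are acyclic (in the left-normal, respectively bottom-normal, sense). By Lemma~\ref{lemma:BPadmitsleftretract} applied slice-by-slice, the augmentation $B_\CE \otimes P_F^*$ is acyclic (the retractions constructed in that lemma are natural enough to piece together across normal degrees, since both the target family lift $(r^n)^\#$ and the standard module retract $\theta^n \otimes (r^n)^*$ were defined purely from the cosimplicial module structure, which here depends smoothly on $k$).

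The key step, and the one requiring the most care, is showing that $F^\#_{\shift} \circ L_\CE$ is a quasi-isomorphism. This is the analogue of Lemma~\ref{lem:FLadmitsleftretract}. The plan is: for each fixed column index $n$ and fixed normal degree $k$, apply Lemma~\ref{lemma:local.to.global.left.retract} to the module $\CE^{\bullet,k}$. Using an open $U \subset \CG_0$ admitting a local section $(r^0,\overline{f})$ of the target-fiber-product map, we construct the local left retract $\{\rho^m\}$ of groupoids exactly as in Lemma~\ref{lem:FLadmitsleftretract}. Goodness of $\CE^{k}$ provides lifts $\{(r^{n+1+m})^\#\}$ of the target family to comorphisms of the $k$-th slice, and these lifts commute with $d_N$ because the entire target family lift is natural in $k$ (this is built into the definition of a good cosimplicial complex, since all coface maps are morphisms of complexes). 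Thus the pairs $((r^{n+1+m})^\# \otimes (\rho^m)^*, \rho^m)$ assemble into local left retracts which piece together into an acyclic left-normal augmentation of the triple complex.

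Finally, having verified that $L_\CE$, $B_\CE$, $B_\CE \otimes P_F^*$, and $F^\#_{\shift} \circ L_\CE$ are all quasi-isomorphisms via Lemma~\ref{lemma:augmentation.for.triple.complexes}, it follows by commutativity of the diagram that $F^\#_{\shift}$ is a quasi-isomorphism, and hence $F^\#$ is too. The main obstacle I anticipate is verifying that the goodness condition from Definition~\ref{definition:cosimplicial.module}, which in principle only lifts target families at each fixed homogeneous degree, actually produces lifts compatible with $d_N$; this is where I would use the fact that Definition~\ref{definition:cosimplicial.complex} requires each $\CE^k$ to be good \emph{individually} together with the compatibility of coface maps with $d_N$, so one can always choose the lifts $\{(r^n)^\#\}$ to be natural in $k$ (if not, one applies the retractions degree-wise and uses that the normal direction collapses in the augmentation lemma via Lemma~\ref{lem:tripleaugmentationacyclic}).
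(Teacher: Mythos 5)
Your proposal is correct and follows essentially the same route as the paper: reduce to the cosimplicial-module case (Theorem~\ref{thm:weakequivalencecosimplicial}) by viewing the shifted objects as triple complexes and checking acyclicity of each augmentation slice-by-slice in the normal degree via Lemma~\ref{lem:tripleaugmentationacyclic}. Your closing observation is also the right resolution of the $d_N$-compatibility worry --- acyclicity only needs to be verified at each fixed normal degree, where the retractions from the module case apply verbatim, so no naturality of the lifts in $k$ is actually required.
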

\begin{proof}
This theorem is essentially a corollary of the same result for good cosimplicial modules. To see why, let us recall how the proof for the case of a cosimplicial module went. First, we consider the commutative diagram:
    \[
    \begin{tikzcd}
    \CG  & \arrow[l, "L", swap]    \CG_{{\shift}} \arrow[d, "B", swap]  & \arrow[l, "F_{{\shift}}", swap] \arrow[d, "P_F"]  \DG_F\\
       &   \CG   & \arrow[l, "F", swap] \CH 
    \end{tikzcd}
    \]
which relates \( \CG \) to the shift double groupoid and the double groupoid associated to \( F \). Using \( \CE \), we construct a closely related diagram of complexes:
\begin{equation}
    \begin{tikzcd}
        \CE \arrow[r, "LN^\#"]   & \CE_{{\shift}} \arrow[r, "F^*_{{\shift}}"]  & F^\#_{\shift} \CE_{\shift}\\
        & \CE \arrow[r, "F^\#"] \arrow[u, "BN^\#"] & F^* \CE \arrow[u, "P_F^\#", swap]
    \end{tikzcd}
\end{equation}
Now, in the setting where \( \CE \) is a cosimplicial complex, we see that \( \CE_{\shift} \) and \( F^*_{\shift} \CE_{\shift} \) are now \emph{triple complexes}. Furthermore, \( LN^\# \) is a left-normal augmentation. \( BN^\#\) and \( P_F^\#\) are bottom-normal augmentations.

If we restrict these complexes to one level of the ``normal'' direction then this is the same diagram that appears in the proof of the cosimplicial modules case (Theorem~\ref{thm:weakequivalencecosimplicial}). Furthermore, in that same proof we established that all of the augmentations in this diagram are acyclic. In Lemma ~\ref{lem:tripleaugmentationacyclic} we observed that the property of being acyclic can be verified by restricting to one level of the normal direction. Therefore, it follows that the following augmentations are acyclic:
\[ LN^\# , BN^\#, P_F^\# , F^\#_{\shift} \circ LN^\# \]
In particular, they are all quasi-isomorphisms. From this it follows that \( F^\#_{\shift} \) is a quasi-isomorphism and hence \( F^\# \) is a quasi-isomorphism.
\end{proof}
\subsection{Morita Invariance for the big site}

We will now prove the analogue of Theorem~\ref{thm:weakequivalencecosimplicial} in the context of cosimplicial complexes.

To state the theorem properly, we require the notion of a sheaf of complexes on the big site.

\begin{definition}\label{defn:sheaf of complexes on the big site}
    A \emph{sheaf of complexes on the big site} consists of a sheaf of modules \( \CE \) on the big site that is further enriched with a compatible sheaf of complexes structure.

    Given such a sheaf, every Lie groupoid \( \CG \) is equipped with a cosimplicial complex \( \CE_\CG \) and a corresponding cohomology \( H(\CE_\CG) \).

    If \( F \colon \CH \to \CG \) is a homomorphism of Lie groupoids we write \( \CE_F \colon \CE_\CG \to \CE_\CH \) to denote the associated morphism of cosimplicial complexes.
\end{definition}

\begin{theorem}\label{thm:big.site.for.complexes}
    Suppose \( \CE \) is a sheaf of complexes on the big site. \( F\colon \CH \to \CG \) is a weak equivalence of Lie groupoids, then the map:
    \[ \CE_F \colon \CE_\CH \to \CE_\CG \]
    induces an isomorphism in cohomology.
\end{theorem}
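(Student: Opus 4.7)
The plan is to combine the proof strategy used for the big-site-modules version of this theorem with the triple-complex reduction developed in the proof of Theorem~\ref{theorem:morita.invariance.for.pullback.complexes}. The outcome is that the entire argument reduces, layer by layer in the normal direction, to already-established acyclicity statements.

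First I would observe that $\CE_\CG$ is automatically a \emph{good} cosimplicial complex: each homogeneous component $\CE^{k}_{\CG}$ is itself a big-site sheaf of modules evaluated on $\CG$, so for any target family $\{r^n\}$ over an open set $U \subset \CG_0$, the functoriality of $\CE$ produces the required lifts $(r^n)^\#$ at level $k$ directly. In particular Theorem~\ref{theorem:morita.invariance.for.pullback.complexes} already supplies the comparison map $F^\# \colon \CE_\CG \to F^* \CE_\CG$ as a quasi-isomorphism. What remains is to identify $F^* \CE_\CG$ with $\CE_\CH$ up to quasi-isomorphism, using the big-site structure.

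Next I would apply the functor $\CE$ to the commutative diagram of simplicial structures in Lemma~\ref{lemma:diagram.lemma.algebras}. Because $\CE$ takes values in complexes, the shift double becomes a triple complex, and the images of $L$, $B$, $P_F$, and $F_\shift \circ L$ become left-normal or bottom-normal augmentations in the sense of the triple-complex definitions given at the start of this section. Moreover, since $\CE$ is a big-site sheaf, the triple complex $F^\#_\shift \CE_{\CG_\shift}$ is canonically identified with the one obtained by applying $\CE$ directly to the nerve of the $F$-double groupoid $\DG_F$, and the map $P_F^\#$ is naturally identified with a bottom-normal augmentation from $\CE_\CH$.

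The acyclicity of these four augmentations is then verified one normal level at a time. Fixing a normal degree $k$, the diagram restricts to a diagram of double complexes identical in shape to the one appearing in the proof of the big-site-modules version of this theorem. That proof constructs local retractions at the level of groupoids, and applying $\CE^k$ transports these into local retractions of the relevant cosimplicial modules. Lemma~\ref{lemma:local.to.global.left.retract} then promotes local retractions to acyclicity of the slice augmentations, and Lemma~\ref{lem:tripleaugmentationacyclic} promotes this slicewise acyclicity to acyclicity of the full triple-complex augmentations. Lemma~\ref{lemma:augmentation.for.triple.complexes} finally converts these acyclicities into quasi-isomorphisms of total complexes, and chasing the diagram yields the desired isomorphism between $H(\CE_\CG)$ and $H(\CE_\CH)$ implemented by $\CE_F$. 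The main obstacle is essentially bookkeeping: one must confirm that the level-$k$ retractions arising from applying $\CE^k$ to groupoid-level retractions assemble coherently across all $k$, which is exactly what Lemma~\ref{lem:tripleaugmentationacyclic} is designed to circumvent, so no genuinely new homotopical construction is needed beyond what the modules proof already provides.
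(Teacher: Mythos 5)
Your second and third paragraphs reproduce the paper's actual argument: apply the functor \( \CE \) directly to the commutative diagram of Lemma~\ref{lemma:diagram.lemma.algebras}, observe that the resulting augmentations of triple complexes restrict, on each level of the normal direction, to the augmentations of double complexes already shown to be acyclic in the big-site-modules proof, and then invoke Lemma~\ref{lem:tripleaugmentationacyclic} and Lemma~\ref{lemma:augmentation.for.triple.complexes}. That part is correct and is essentially the paper's proof.

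The gap is in the way you try to splice this into the pullback theorem. You assert that, because \( \CE \) is a big-site sheaf, the pullback triple complex \( F^*_{\shift}\CE_{\CG_{\shift}} \) is canonically identified with \( \CE_{\DG_F} \) (and, in your opening paragraph, that what remains is to identify \( F^*\CE_\CG \) with \( \CE_\CH \)). Neither identification holds in general. By the paper's definition, the pullback of a module along \( f \colon N \to M \) is \( \CE(M)\otimes_{C^\infty(M)} C^\infty(N) \), which is typically a \emph{proper} submodule of \( \CE(N) \): already for \( \CE = \Omega^1 \) and a surjective submersion \( f \), the pullback module contains no forms with a nonzero vertical component. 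The same phenomenon occurs for the maps \( F_{\shift}\nerv{n,m} \) and \( F \), so \( F^*_{\shift}\CE_{\CG_{\shift}} \) sits strictly inside \( \CE_{\DG_F} \) and \( F^*\CE_\CG \) sits strictly inside \( \CE_\CH \). This is precisely why the paper states Theorem~\ref{theorem:morita.invariance.for.pullback.complexes} and Theorem~\ref{thm:big.site.for.complexes} as separate results rather than deducing one from the other: in the big-site proof every object in the diagram is a genuine evaluation of \( \CE \) on a manifold and every map is the comorphism supplied by the sheaf itself, with no tensor products anywhere. If you delete the first paragraph and the identification sentence, what remains is a complete and correct proof; as written, the argument routes through an isomorphism that is false for the motivating example of differential forms.
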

\begin{proof}
This theorem is essentially a corollary of the proof for cosimplicial modules (Theorem~\ref{thm:weakequivalencecosimplicial}). To see why, let us recall how the proof for the case of a cosimplicial module went. First, we consider the commutative diagram:
    \[
    \begin{tikzcd}
    \CG  & \arrow[l, "L", swap]    \CG_{{\shift}} \arrow[d, "B", swap]  & \arrow[l, "F_{{\shift}}", swap] \arrow[d, "P_F"]  \DG_F\\
       &   \CG   & \arrow[l, "F", swap] \CH 
    \end{tikzcd}
    \]
which relates \( \CG \) to the shift double groupoid and the double groupoid associated to \( F \). We can apply our functor \( \CE \) to this entire diagram to obtain a new diagram of complexes:
\begin{equation*}
    \begin{tikzcd}
        \CE_\CG \arrow[r, "{\CE_{LN}}"]   & \CE_{\CG_{\shift}} \arrow[r, "\CE_{F_{{\shift}}}"]  & \CE_{\DG} \\
        & \CE_\CG \arrow[r, "\CE_{F} "] \arrow[u, "\CE_{BN}"] & \CE_\CH \arrow[u, "\CE_{P_F}", swap]
    \end{tikzcd}.
\end{equation*}
Observe that in this case we are now dealing with triple complexes and augmentations of triple complexes. Now if we restrict the above diagram to a single level of the normal direction (say \( n \in \mathbb{N} \)), it becomes precisely the diagram appearing in the proof of Theorem~\ref{thm:weakequivalencecosimplicial} except applied to the functor \( \CE^n \) which only considers a single homogeneous degree of our sheaf of complexes.

In the same proof, we showed that all of the augmentations appearing in the 2D version of the above diagram are acyclic. We observed in Lemma~\ref{lem:tripleaugmentationacyclic} that it is sufficient to check that an augmentation is acyclic for individual levels of the normal direction. Hence, it follows that all augmentations (of triple complexes) that appear in the above diagram are quasi-isomorphisms. From this, it follows that \( \CE_{F_{\shift}} \) is a quasi-isomorphism and so \( \CE_{F} \) is a quasi-isomorphism.
\end{proof}

\bibliographystyle{alpha}
\bibliography{references.bib}

\begin{thebibliography}{dHOS24}

\bibitem[AC13]{AriasAbad}
Camilo {Arias Abad} and Marius Crainic.
\newblock Representations up to homotopy and bott's spectral sequence for lie
  groupoids.
\newblock {\em Advances in Mathematics}, 248:416--452, 2013.

\bibitem[BX11]{Behrend-Xu}
Kai Behrend and Ping Xu.
\newblock Differentiable stacks and gerbes.
\newblock {\em J. Symplectic Geom.}, 9(3):285--341, 2011.

\bibitem[CM04]{Crainic-Moerdijk}
Marius Crainic and Ieke Moerdijk.
\newblock \v{C}ech-{D}e {R}ham theory for leaf spaces of foliations.
\newblock {\em Math. Ann.}, 328(1-2):59--85, 2004.

\bibitem[Cra03]{Cra03}
Marius Crainic.
\newblock Differentiable and algebroid cohomology, van {E}st isomorphisms, and
  characteristic classes.
\newblock {\em Comment. Math. Helv.}, 78(4):681--721, 2003.

\bibitem[dHOS24]{del-hoyo}
Matias del Hoyo, Cristian Ortiz, and Fernando Studzinski.
\newblock On the cohomology of differentiable stacks, 2024.

\bibitem[Hae79]{Haefliger}
A.~Haefliger.
\newblock Differential cohomology.
\newblock In {\em Differential topology ({V}arenna, 1976)}, pages 19--70.
  Liguori, Naples, 1979.

\bibitem[Ill71]{Illusie}
Luc Illusie.
\newblock {\em Complexe cotangent et d\'eformations. {I}}, volume Vol. 239 of
  {\em Lecture Notes in Mathematics}.
\newblock Springer-Verlag, Berlin-New York, 1971.

\bibitem[LBM15]{Li-Bland-Meinrenken}
David Li-Bland and Eckhard Meinrenken.
\newblock On the van {E}st homomorphism for {L}ie groupoids.
\newblock {\em Enseign. Math.}, 61(1-2):93--137, 2015.

\bibitem[MS20]{Meinrenken-Salazar}
Eckhard Meinrenken and Mar\'ia~Amelia Salazar.
\newblock Van {E}st differentiation and integration.
\newblock {\em Math. Ann.}, 376(3-4):1395--1428, 2020.

\bibitem[MW95]{Muhly-Williams}
Paul~S. Muhly and Dana~P. Williams.
\newblock Groupoid cohomology and the {D}ixmier-{D}ouady class.
\newblock {\em Proc. London Math. Soc. (3)}, 71(1):109--134, 1995.

\bibitem[Ren80]{Renault}
Jean Renault.
\newblock {\em A groupoid approach to {$C\sp{\ast} $}-algebras}, volume 793 of
  {\em Lecture Notes in Mathematics}.
\newblock Springer, Berlin, 1980.

\bibitem[Tu06]{Tu_Grpd_Coh}
Jean-Louis Tu.
\newblock Groupoid cohomology and extensions.
\newblock {\em Trans. Amer. Math. Soc.}, 358(11):4721--4747, 2006.

\bibitem[WX91]{Xu-Weinstein}
Alan Weinstein and Ping Xu.
\newblock Extensions of symplectic groupoids and quantization.
\newblock {\em J. Reine Angew. Math.}, 417:159--189, 1991.

\end{thebibliography}

\end{document}